\newtheorem{thm}{Theorem}[section]
\newtheorem{lem}[thm]{Lemma}
\newtheorem{cor}[thm]{Corollary}
\newtheorem{prop}[thm]{Proposition}
\theoremstyle{remark}
\newtheorem{defn}[thm]{Definition}
\newtheorem{rem}[thm]{Remark}
\newtheorem{exa}[thm]{Example}
\newtheorem{notation}[thm]{Notation}
\newtheorem*{prfofthm}{Proof of Theorem 2.1}
\newtheorem*{acknowledgement}{Acknowledgment}
\title{Slopes of Fibered Surfaces with a Finite Cyclic Automorphism}
\author{Makoto Enokizono}
\subjclass[2010]{14D06}
\thanks{
	{\bf Keywords:}
	fibered surface, slope inequality, cyclic covering.
}
\address{Makoto Enokizono,
	Department of Mathematics,
	Graduate School of Science,
	Osaka University,
	Toyonaka, Osaka 560-0043, Japan}
\email{m-enokizono@cr.math.sci.osaka-u.ac.jp}
 \def\qed{\hfill $\Box$} 
\begin{document}
\maketitle
\begin{abstract}
We study slopes of finite cyclic covering fibrations of a fibered surface. We give the best possible lower bound of the slope of these fibrations. We also give the slope equality of finite cyclic covering fibrations of a ruled surface and observe the local concentration of the global signature of these surfaces on a finite number of fiber germs. We also give an upper bound of the slope of finite cyclic covering fibrations of a ruled surface.
\end{abstract}

\section*{Introduction}

Let $f\colon S\rightarrow B$ be a surjective morphism from a complex smooth projective surface $S$ to a smooth projective curve $B$ with connected fibers. 
The datum $(S,f,B)$ or simply $f$ is called a {\em fibered surface} or a fibration. 
A fibered surface $f$ is said to be {\em relatively minimal} if there exist no $(-1)$-curves contained in fibers of $f$, where a $(-1)$-curve is a non-singular rational curve with self-intersection number $-1$. 
The genus $g$ of a fibered surface $f$ is defined to be that of a general fiber of $f$. 
We put $K_f=K_S-f^*K_B$ and call it the relative canonical bundle. 

Assume that $f\colon S\to B$ is a relatively minimal fibration of genus $g\geq 2$, and 
consider the following three relative invariants:
\begin{align*}
\chi_f:=&\ \chi(\mathcal{O}_S)-(g-1)(b-1),\\
K_f^2=&\ K_S^2-8(g-1)(b-1),\\
e_f:=&\ e(S)-4(g-1)(b-1),
\end{align*}
where $b$ and $e(S)$ respectively denote the genus of the base curve $B$ and the topological Euler characteristic of $S$.
Then the following are well known:
\begin{itemize}
\item (Noether) $12\chi_f=K_f^2+e_f$.
\item (Arakelov) $K_f$ is nef.
\item (Ueno) $\chi_f\ge 0$, and $\chi_f=0$ if and only if $f$ is locally trivial (i.e., a holomorphic fiber bundle).
\item (Segre) $e_f\ge 0$, and $e_f=0$ if and only if $f$ is smooth.
\end{itemize}

When $f$ is not locally trivial, we put 
$$
\lambda_f=\frac{K_f^2}{\chi_f}
$$
and call it the {\em slope} of $f$.
Then one sees $0<\lambda_f\le 12$ from the above results. 
The slope of a fibration has proven to be sensible to a lot of geometric properties, both of the fibers of $f$ and of the surface $S$ itself (cf.\ \cite{ak}). 
A fibration of slope $12$ is called a Kodaira fibration, first examples of which were constructed by Kodaira in \cite{kod}.
In particular, the upper bound is sharp.
As to the lower bound, Xiao showed in \cite{xiao2} the inequality
$$
\lambda_f\ge 4-\frac{4}{g},
$$
which is nowadays called the {\em slope inequality}.
Furthermore, fibrations with slope $4-4/g$ are turned out to be of hyperelliptic type (\cite{kon} and \cite{xiao2}).
Hence a shaper lower bound is expected for non-hyperelliptic fibrations.

It is generally believed that there is the lower bound of the slope 
depending on the gonality (or the Clifford index) of fibrations.
Though there are several attempts, a general bound is still in fancy.
To attack such a problem, one of the most hopeful strategies may be to extend a 
special kind of linear system (e.g., a gonality pencil) or an automorphism of a general fiber to the whole surface 
and to study the fibration through the covering structure on $S$ thus obtained.
However, it is usually impossible to have the desired extension, mainly because the object in question 
on the fiber is not unique and the monodromy forces it to change from one to 
another (see, \cite{barja-naranjo}).
So, we are naturally led to consider the toy case that the fibration $f:S\to B$ is 
obtained from another fibration $W\to B$ via a covering map $S\to W$.
Besides the hyperelliptic fibrations, one of the remarkable results in this direction obtained so far is due to Cornalba and Stoppino 
\cite{dbl}.
They gave the lower bound for the slope of double covering fibrations and constructed examples showing its sharpness, 
extending a former result for bi-elliptic fibrations by Barja \cite{ba}.
See also \cite{luzuo} for recent developments of double covering fibrations.

In this paper, we consider fibered surfaces induced from a particular type of 
cyclic coverings, and try to generalize results for double coverings. 
More precisely, we shall work in the following situation.
A relatively minimal fibration $f\colon S\to B$ of genus $g\ge 2$ is 
called a {\em primitive cyclic covering fibration} of type $(g,h,n)$, if there exist a (not necessarily relatively minimal) fibration $\widetilde{\varphi}\colon\widetilde{W}\to B$ of genus $h\ge 0$, and a finite cyclic covering
$$
\widetilde{\theta}\colon\widetilde{S}=
\mathrm{Spec}_{\widetilde{W}}\left(\bigoplus_{j=0}^{n-1}
\mathcal{O}_{\widetilde{W}}(-j\widetilde{\mathfrak{d}})\right)\to \widetilde{W}
$$
of order $n$ branched along a smooth curve $\widetilde{R}\in |n\widetilde{\mathfrak{d}}|$
for some $n\geq 2$ and $\widetilde{\mathfrak{d}}\in 
\mathrm{Pic}(\widetilde{W})$
such that $f$ is the relatively minimal 
model of $\widetilde{f}:=\widetilde{\varphi}\circ \widetilde{\theta}$.
Here, we put the adjective ``primitive'', because a finite cyclic covering 
between non-singular surfaces is not necessarily obtained in this way. 
Nevertheless, the assumption would be acceptable, because the situation in \cite{dbl} is exactly the case $n=2$ and $h>0$, and a hyperelliptic fibration is nothing more than a primitive cyclic covering fibration of type $(g,0,2)$. 
Furthermore, Kodaira fibrations in \cite{kod} are among primitive cyclic covering fibrations with $\widetilde{W}$ being product of two curves.

As the first main result, we shall show the following:

\begin{thm}
Let $f\colon S\rightarrow B$ be a primitive cyclic covering fibration of type $(g,h,n)$. 
If $h\ge 1$ and $g\ge (2n-1)(2hn+n-1)/(n+1)$, then we have 
\begin{equation*}
K_f^2\ge\frac{24(g-1)(n-1)}{2(2n-1)(g-1)-n(n+1)(h-1)}\chi_f.
\end{equation*}
\end{thm}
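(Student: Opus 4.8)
The plan is to push every computation down to the base $\widetilde{W}$ via the cyclic covering and then feed the resulting numerical data into Xiao's slope-estimating machinery. Write $K_{\widetilde\varphi}:=K_{\widetilde W}-\widetilde\varphi^{*}K_B$, let $\widetilde\Gamma$ be a general fiber of $\widetilde\varphi$ (a smooth curve of genus $h$), and set $r:=\widetilde{\mathfrak d}\cdot\widetilde\Gamma$. Since $\widetilde R\in|n\widetilde{\mathfrak d}|$ is smooth and meets $\widetilde\Gamma$ transversally in $nr$ points, Riemann--Hurwitz on the general fiber gives $2g-2=n(2h-2)+nr(n-1)$, which pins down $r$ in terms of $(g,h,n)$. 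From $K_{\widetilde S}=\widetilde\theta^{*}(K_{\widetilde W}+(n-1)\widetilde{\mathfrak d})$ and $\widetilde\theta_{*}\mathcal O_{\widetilde S}=\bigoplus_{j=0}^{n-1}\mathcal O_{\widetilde W}(-j\widetilde{\mathfrak d})$ I would compute, by Riemann--Roch on $\widetilde W$, the two basic invariants of $\widetilde f=\widetilde\varphi\circ\widetilde\theta$. Because $f$ is the relatively minimal model of $\widetilde f$, one has $\chi_f=\chi_{\widetilde f}$ and $K_f^2\ge K_{\widetilde f}^2$ (contracting the vertical $(-1)$-curves only raises $K^2$), so it suffices to bound $K_{\widetilde f}^2$ from below.

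Carrying out the Riemann--Roch bookkeeping, the base-change contributions from $K_B$ cancel and one is left with the clean identities
\[
\chi_f=n\chi_{\widetilde\varphi}+\frac{n(n-1)(2n-1)}{12}\,\widetilde{\mathfrak d}^2+\frac{n(n-1)}{4}\,\widetilde{\mathfrak d}\cdot K_{\widetilde\varphi},
\]
\[
K_{\widetilde f}^2=n\,K_{\widetilde\varphi}^2+2n(n-1)\,\widetilde{\mathfrak d}\cdot K_{\widetilde\varphi}+n(n-1)^2\,\widetilde{\mathfrak d}^2 .
\]
Substituting these into the claimed inequality and eliminating $r$ through Riemann--Hurwitz reduces the theorem to a single polynomial inequality in the three quantities $\widetilde{\mathfrak d}^2$, $\widetilde{\mathfrak d}\cdot K_{\widetilde\varphi}$ and $\chi_{\widetilde\varphi}$ (together with $K_{\widetilde\varphi}^2$), with coefficients depending only on $g,h,n$. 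As a check that the target constant is the right one, for $n=2$ it collapses to $K_f^2\ge\frac{4(g-1)}{g-h}\chi_f$, the Cornalba--Stoppino bound for double covers.

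The core of the argument is to produce a lower bound for $\widetilde{\mathfrak d}^2$ (equivalently for $K_{\widetilde f}^2=n(K_{\widetilde\varphi}+(n-1)\widetilde{\mathfrak d})^2$) in terms of $r$ and $\widetilde{\mathfrak d}\cdot K_{\widetilde\varphi}$. For this I would invoke the degree-$n$ automorphism of $S$: the Hodge bundle splits into eigensheaves,
\[
\widetilde f_{*}\omega_{\widetilde f}=\bigoplus_{k=0}^{n-1}\widetilde\varphi_{*}\bigl(\omega_{\widetilde\varphi}\otimes\mathcal O_{\widetilde W}(k\widetilde{\mathfrak d})\bigr),
\]
whose ranks $h,\,h-1+r,\dots,h-1+(n-1)r$ sum to $g$ and whose degrees are expressible through $\widetilde{\mathfrak d}^2$ and $\widetilde{\mathfrak d}\cdot K_{\widetilde\varphi}$. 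Running Xiao's filtration inequality on the Harder--Narasimhan refinement adapted to this decomposition --- using the nefness of $K_{\widetilde\varphi}+(n-1)\widetilde{\mathfrak d}$ (the descent of the Arakelov-nef $K_f$) together with Ueno's $\chi_{\widetilde\varphi}\ge0$, which is where the hypothesis $h\ge1$ is used --- yields the required lower bound. Finally one checks the resulting polynomial inequality; the hypothesis $g\ge(2n-1)(2hn+n-1)/(n+1)$ enters precisely to make the denominator $2(2n-1)(g-1)-n(n+1)(h-1)$ positive and to guarantee that the lowest-slope eigenpiece contributes with the correct sign.

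The step I expect to be the main obstacle is this lower bound on $\widetilde{\mathfrak d}^2$. Two features make it delicate: first, $\widetilde\varphi$ is not assumed relatively minimal (blowing $\widetilde W$ down would make $\widetilde R$ singular), so $K_{\widetilde\varphi}^2$ cannot simply be discarded and must be tracked against the exceptional contributions of $\widetilde S\to S$; second, the eigensheaves above need not be semistable and the monodromy can mix their Harder--Narasimhan pieces, so extracting the \emph{sharp} constant $24(g-1)(n-1)$ requires choosing the filtration and the threshold on $g$ so that no positivity is wasted. Controlling these two effects simultaneously is the crux of the proof.
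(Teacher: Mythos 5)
The decisive step of your argument is missing, and you say so yourself: the entire content of the theorem is the lower bound you defer to ``Xiao's filtration inequality on the Harder--Narasimhan refinement'' of the eigensheaf decomposition. Your bookkeeping up to that point is correct and matches the paper ($\chi_f=\chi_{\widetilde f}$, $K_f^2\ge K_{\widetilde f}^2$, the covering formulas for $K_{\widetilde f}^2$ and $\chi_{\widetilde f}$, and the $n=2$ specialization $4(g-1)/(g-h)$ of the constant), but no inequality among $\widetilde{\mathfrak d}^2$, $\widetilde{\mathfrak d}K_{\widetilde\varphi}$, $K_{\widetilde\varphi}^2$, $\chi_{\widetilde\varphi}$ is ever produced, so the ``single polynomial inequality'' is never verified. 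Worse, one of the two inputs you plan to feed into that machinery is false: $K_{\widetilde\varphi}+(n-1)\widetilde{\mathfrak d}$ is \emph{not} nef unless $\rho$ is an isomorphism, because its pullback under $\widetilde\theta$ is $K_{\widetilde f}$, and $K_{\widetilde f}E=-1$ for every $\widetilde f$-vertical $(-1)$-curve $E$ --- and such curves are exactly what distinguish $\widetilde S$ from $S$. Your attribution of the hypothesis $h\ge 1$ to Ueno's theorem is also off: $\chi_{\widetilde\varphi}=\chi_\varphi\ge 0$ holds for every $h$, whereas in the actual proof $h\ge1$ is what makes the coefficient of $\sum_i[m_i/n]^2$ in the blow-up correction non-negative. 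Finally, Xiao's method applied to eigensheaf decompositions is in general lossy, and there is no indication (and you give none) that it reaches the sharp constant $\lambda_{g,h,n}$; even the $n=2$ case you cite is proved by Cornalba--Stoppino via canonical resolution of the branch curve, not via the Harder--Narasimhan filtration.

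The paper supplies precisely the mechanism your sketch flags as ``the main obstacle'' but leaves open. One passes to a relatively minimal model $\varphi\colon W\to B$, factors $\widetilde\psi\colon\widetilde W\to W$ into blow-ups with branch multiplicities $m_i$, and exploits the structural fact (Lemma~\ref{multlem}) that each $m_i\equiv 0$ or $1 \pmod n$ with $\widetilde{\mathfrak d}=\widetilde\psi^*\mathfrak d-\sum_i[m_i/n]\mathbf E_i$; this yields exact difference formulas for $\omega_{f'}^2-K_{\widetilde f}^2$ and $\chi_{f'}-\chi_{\widetilde f}$ in terms of the $[m_i/n]$. On the minimal model the inequality $\omega_{f'}^2\ge\lambda_{g,h,n}\chi_{f'}$ (Lemma~\ref{fprimelem}) is proved case by case: for $h\ge2$ by combining Xiao's slope inequality for the genus-$h$ fibration $\varphi$ itself with the Hodge index theorem applied to the intersection matrix of $\{K_\varphi,\mathfrak d,\Gamma\}$ --- this is exactly what controls the mixed terms $K_\varphi\mathfrak d$ and $\mathfrak d^2$ that your sketch cannot bound; for $h=1$ by the canonical bundle formula for elliptic surfaces. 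The hypothesis $g\ge(2n-1)(2hn+n-1)/(n+1)$ enters to make the resulting coefficient of $K_\varphi^2$ non-negative, not merely to keep the denominator of $\lambda_{g,h,n}$ positive as you assert. The blow-up correction in $K_f^2-\lambda_{g,h,n}\chi_f$ is then shown to be monotone in the $m_i$, reducing to $[m_i/n]=1$, where it is visibly non-negative when $h\ge1$. To repair your proposal you would have to replace the filtration gesture by estimates of this kind; as written, it sets up the right framework and stops exactly where the theorem begins.
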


\noindent 
Moreover, we will construct an example showing that the inequality is sharp (Example~\ref{sharpex}). 
Putting $n=2$, one recovers Cornalba-Stoppino's inequality shown in \cite{dbl}. 
The essential idea of the proof is analogous to the double covering case. 
We take a relatively minimal model $\varphi\colon W\to B$ of $\widetilde{\varphi}\colon \widetilde{W}\to B$ (unique when $h>0$) and consider the effects to the invariants of the ``canonical resolution'' of singular points of the branch curve $R\subset W$ (obtained as the direct image of $\widetilde{R}$). 
One of the striking facts in our setting is that the multiplicities of 
singular points must be either $0$ or $1$ modulo $n$ (Lemma \ref{multlem}).

When $h=0$, we can move from one relatively minimal model $\varphi\colon W\to B$ to another via elementary transformations among $\mathbb{P}^1$-bundles, in order to standardize the branch locus.
This enables us to prove a more accurate result. 
Namely, we shall show the {\em slope equality} for them.

\begin{thm}
There exists a function $\mathrm{Ind}\colon 
 \mathcal{A}_{g,0,n}\to \mathbb{Q}_{\geq 0}$ from the set 
 $\mathcal{A}_{g,0,n}$ of all fiber germs of primitive cyclic covering 
 fibrations of type $(g,0,n)$ such that $\mathrm{Ind}(F_p)=0$ for a 
 general $p\in B$ and
\begin{equation*}
K_f^2=\frac{24(g-1)(n-1)}{2(2n-1)(g-1)+n(n+1)}\chi_f+\sum_{p\in B}{\rm Ind}(F_p)
\end{equation*}
for any primitive cyclic covering fibration $f\colon S\rightarrow 
 B$ of type $(g,0,n)$.
\end{thm}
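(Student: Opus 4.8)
The plan is to realize $f$ explicitly through its cyclic covering structure over a relatively minimal ruled model, to separate a \emph{global} ``main term'' whose coefficient is forced to equal the asserted slope, and to collect the remainder as a sum of local corrections that will define $\mathrm{Ind}$. First I would fix a relatively minimal model $\varphi\colon W\to B$ of $\widetilde\varphi$; since $h=0$ this is a $\mathbb{P}^1$-bundle with $K_W^2=8(1-b)$ and $\chi(\mathcal O_W)=1-b$. Let $R=n\mathfrak d$ be the direct image of $\widetilde R$ and let $\Gamma$ be a general fibre of $\varphi$. Writing $d:=\mathfrak d\cdot\Gamma$, Riemann--Hurwitz on a general fibre gives the genus relation $n(n-1)d=2(g+n-1)$, and on a geometrically ruled surface one has the identity $\mathfrak d\cdot K_W=2d(b-1)-\mathfrak d^2/d$, which eliminates $\mathfrak d\cdot K_W$ in favour of the single free numerical parameter $M:=\mathfrak d^2$ (the base genus $b$ being the only other global parameter).

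Next I would run the canonical resolution $\psi\colon\widetilde W\to W$ of the singularities of $R$ and apply the cyclic covering formulas
\begin{align*}
K_{\widetilde S}^2&=n\bigl(K_{\widetilde W}+(n-1)\widetilde{\mathfrak d}\bigr)^2,\\
\chi(\mathcal O_{\widetilde S})&=n\chi(\mathcal O_{\widetilde W})+\tfrac{n(n-1)(2n-1)}{12}\,\widetilde{\mathfrak d}^{\,2}+\tfrac{n(n-1)}{4}\,\widetilde{\mathfrak d}\cdot K_{\widetilde W},
\end{align*}
where a blow-up centred at a point of multiplicity $m$ replaces $\widetilde{\mathfrak d}$ by $\psi^*\widetilde{\mathfrak d}-q E$ with $q=\lfloor m/n\rfloor$ and $K_{\widetilde W}$ by $\psi^*K_{\widetilde W}+E$. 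By Lemma~\ref{multlem} one has $m\equiv 0$ or $1\pmod n$, so each exceptional correction enters in a tightly constrained way. Expanding both formulas yields a global part, a function of $M$ and $b$ alone, plus a sum over the finitely many fibres on which $R$ is singular of purely local contributions $A_p$ (loss in $K^2$) and $C_p$ (loss in $\chi$). Passing to the relatively minimal model $S$ changes $K^2$ by the number $k=\sum_p k_p$ of contracted $(-1)$-curves, while $\chi$ is unchanged.

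The decisive step is to form $K_f^2-c\,\chi_f$ and to observe that for the stated value of $c$ the global part vanishes identically. Indeed, the coefficient of $(b-1)$ vanishes on each side separately after substituting the genus relation, reflecting that the construction is relative over $B$, so that $c$ is pinned down solely by cancellation of the fibre-degree parameter $M$; this gives $c=\tfrac{12[(n-1)d-2]}{(2n-1)d-3}$, and substituting $(n-1)d=2(g+n-1)/n$ turns it into exactly $\tfrac{24(g-1)(n-1)}{2(2n-1)(g-1)+n(n+1)}$. Hence $K_f^2-c\,\chi_f=\sum_{p\in B}\mathrm{Ind}(F_p)$, where I set $\mathrm{Ind}(F_p):=k_p-A_p+c\,C_p$.

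It remains to show that $\mathrm{Ind}$ descends to a well-defined, nonnegative function on fibre germs that vanishes generically. Genericity is immediate, as $A_p,C_p,k_p$ are supported where $R$ is singular, and nonnegativity reduces, via the mod-$n$ constraint on multiplicities, to a positivity check for the local indices that uses the explicit value of $c$. The main obstacle is \emph{well-definedness}: a priori $\mathrm{Ind}(F_p)$ is read off from the chosen model $W$ and resolution, whereas distinct relatively minimal models of $\widetilde\varphi$ differ by elementary transformations of $\mathbb{P}^1$-bundles which globally alter $M=\mathfrak d^2$ and may redistribute the singularities of $R$. The heart of the argument is therefore to use precisely these elementary transformations to standardize the branch locus fibre by fibre and to verify that the resulting local index depends only on the germ $F_p$ of $f$; this extra rigidity, available only when $h=0$, is what upgrades the inequality of Theorem~2.1 to the slope equality.
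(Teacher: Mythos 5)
Your proposal is correct and follows essentially the same route as the paper: the paper establishes the exact identity $\omega_{f'}^2=\lambda_{g,0,n}\chi_{f'}$ for $h=0$ (Lemma \ref{fprimelem}), collects the canonical-resolution losses \eqref{fprime-ftilde1}, \eqref{fprime-ftilde2} together with $\varepsilon$ fiber by fiber, and your $k_p-A_p+c\,C_p$ expands (using $c=\lambda_{g,0,n}=12((n-1)r-2n)/((2n-1)r-3n)$) into exactly the paper's Horikawa index \eqref{Horeq}. Your appeal to elementary transformations for standardizing the branch locus is the paper's Lemma \ref{eltrlem}; note only that this standardization is needed not just for well-definedness of the germ-wise index but also for non-negativity, since the multiplicity bound $m\le r/2+1$ is what confines $k$ to the range where the coefficient of $\alpha_k(F_p)$ in \eqref{Horeq} is non-negative.
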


\noindent
This is a generalization of the hyperelliptic case (cf. \cite{pi1}). 
$\mathrm{Ind}(F_p)$ is called the {\em Horikawa index} of the fiber $F_p$ and defined in terms of {\em singularity indices} (Definition \ref{sinddef}) over $p\in B$ of the branch locus (see Theorem \ref{slopeeq} for the detail). 
This enables us to observe that the signature of $S$ is concentrated on singular fibers 
by introducing the {\em local signature} of a fiber (Corollary~\ref{signcor}). 
For a general discussion on the slope equalities, Horikawa index and the local signature, see \cite{ak}. 

As the third result, we will give an upper bound of the slope.

\begin{thm}
Let $f\colon S\to B$ be a primitive cyclic covering fibration of type $(g,0,n)$ and assume $n\ge 4$.
Put $r:=\displaystyle{\frac{2g}{n-1}+2}$,
$\delta:=\left\{\begin{array}{l}
0, \ \text{if}\ r\in 2n\mathbb{Z}, \\
1, \ \text{if}\ r\not\in 2n\mathbb{Z}. \\
\end{array}
\right.$
Then, the following hold:

\smallskip

\noindent
$(1)$ If $n\le r< n(n-1)$, then
$$
K_f^2\leq \left(12-\frac{48n^2(r-1)}{(n-1)(n+1)(r^2-\delta n^2)}\right)\chi_f.
$$

\smallskip

\noindent
$(2)$ If $r\ge n(n-1)$, then
$$
K_f^2\leq \left(12-\frac{48n(n-1)(r-1)}{n(n+1)r^2-8(2n-1)r+24n-\delta n^3(n+1)}\right)\chi_f.
$$
\end{thm}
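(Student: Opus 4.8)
The plan is to derive the upper bound from the slope equality of Theorem~\ref{slopeeq}. Setting $\lambda_0=\frac{24(g-1)(n-1)}{2(2n-1)(g-1)+n(n+1)}$, that theorem reads $K_f^2=\lambda_0\chi_f+\sum_{p\in B}\mathrm{Ind}(F_p)$, so $\lambda_f=\lambda_0+\left(\sum_p\mathrm{Ind}(F_p)\right)/\chi_f$ and the slope exceeds $\lambda_0$ by a term measuring the total Horikawa index. Since each $\mathrm{Ind}(F_p)$ is a nonnegative combination of the singularity indices of Definition~\ref{sinddef}, bounding the slope from above amounts to bounding the total singularity index against $\chi_f$; equivalently, through Noether's identity $12\chi_f=K_f^2+e_f$, I would bound $e_f$ from below by the asserted multiple of $\chi_f$, which is why the stated bounds all have the shape $12-(\cdots)$.

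First I would make the covering data explicit. Because $h=0$, the relatively minimal model $\varphi\colon W\to B$ is a $\mathbb{P}^1$-bundle $\mathbb{P}(\mathcal{E})$; on a general fibre $\Gamma$ the smoothness of the original branch divisor forces full ramification at each branch point, so the number of branch points is $R\cdot\Gamma=r$ and $\mathfrak{d}\cdot\Gamma=r/n$ (in particular $n\mid r$, and $\delta$ records whether $r/n$ is even or odd). Running the canonical resolution of the singularities of $R$, I would express $K_f^2$ and $\chi_f$ as the global intersection contributions of the pair $(\mathcal{E},\mathfrak{d})$ minus explicit correction terms indexed by the infinitely near singular points, and re-express each $\mathrm{Ind}(F_p)$ through the same indices. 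The elementary transformations among $\mathbb{P}^1$-bundles already used for Theorem~\ref{slopeeq} let me normalize $(\mathcal{E},\mathfrak{d})$, reducing the whole question to the local singularity data concentrated over the finitely many $p\in B$.

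The heart of the argument is then a purely local, combinatorial optimization: among all admissible branch-curve germs on a single fibre, maximize the ratio of the Horikawa index to the local contribution to $\chi_f$. Here Lemma~\ref{multlem} is decisive, since it forces every multiplicity occurring in a resolution to be $\equiv 0$ or $1\pmod n$; this congruence rigidity turns the maximization into a finite problem governed by how the $r$ branch points on a degenerate fibre are allowed to collide. My expectation is that the extremum is attained when the branch points coalesce into a collision that is as balanced as the congruence permits: a perfectly even split is possible exactly when $r/n$ is even, and the parity obstruction otherwise costs a defect of order $n^2$. This is precisely the dichotomy recorded by $\delta$, and it accounts for the terms $\delta n^2$ and $\delta n^3(n+1)$ appearing in the denominators of (1) and (2).

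Summing the optimal local estimate over $p\in B$ then gives $\sum_p\mathrm{Ind}(F_p)\le C\,\chi_f$ with $C$ the maximal local ratio, hence $\lambda_f\le\lambda_0+C$, which I would simplify to the two closed forms. The regime change at $r=n(n-1)$, i.e.\ at $\mathfrak{d}\cdot\Gamma=n-1$, should reflect a change in the extremal degeneration tied to the positivity of the horizontal part of the relative canonical class $K_{W/B}+(n-1)\mathfrak{d}$: for $r<n(n-1)$ a single high-multiplicity collision dominates, while for $r\ge n(n-1)$ a different admissible germ becomes worst, producing the denominator $n(n+1)r^2-8(2n-1)r+24n-\delta n^3(n+1)$. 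The main obstacle I anticipate is exactly this extremal analysis—identifying the worst germ under the mod-$n$ multiplicity constraint, proving it optimal throughout the admissible range $r\ge n$, and verifying that the resulting constants collapse to the clean expressions claimed in (1) and (2).
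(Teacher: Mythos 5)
Your overall skeleton matches the paper's strategy---via Noether's formula the claimed bound is equivalent to $e_f-\mu\chi_f\ge 0$ for the appropriate $\mu$, both $e_f$ and $\chi_f$ are expressed through the singularity indices of Definition~\ref{sinddef}, and your parity heuristic for $\delta$ is exactly what happens: the coefficient of $\alpha_k$ in $\chi_f$ is proportional to $k(r-nk)$, which an integer $k$ maximizes at the value $(r^2-\delta n^2)/4n$ according to whether $r/2n$ is an integer ($\delta=0$) or a half-odd integer ($\delta=1$). But your proposal has a genuine gap: it never controls $\varepsilon(F_p)$, the number of $\widetilde f$-vertical $(-1)$-curves contracted by $\rho\colon\widetilde S\to S$. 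These enter the Horikawa index \eqref{Horeq} with the term $+\varepsilon(F_p)$ and enter $e_f$ in \eqref{ef} with the negative coefficient $-(2n-1)$, so after collecting coefficients one must offset a term of the form $-(2A_n+1)\varepsilon(F_p)$ against $A_n\alpha_0(F_p)+B_n\sum_{k}\alpha_k(F_p)$. Your ``finite optimization over collisions of the $r$ branch points'' sees only the multiplicities modulo $n$ (Lemma~\ref{multlem}); it does not see that a vertical $(-an)$-curve inside $\widetilde R$ must be manufactured by a specific chain of infinitely near singular points along a fiber or an exceptional curve. The actual heart of the paper's proof is Lemma~\ref{jplem}: decomposing $\widetilde R_v(p)$ into families $D^t(p)$, a connectedness/graph argument giving $\iota(F_p)=j(F_p)-\eta_p$, and counting the singular points needed to create each $(-an)$-curve, yielding $\alpha_0^{+}(F_p)\ge(n-2)(j(F_p)-\eta_p+2\kappa(F_p))$ and $\sum_{k\ge1}\alpha_k(F_p)\ge\sum_{a\ge1}(an-2)j_a(F_p)+2\eta_p-\kappa(F_p)$. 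Fiberwise non-negativity then hinges on $(n-2)A_n-2B_n>0$, which is precisely where the hypothesis $n\ge4$ enters; your proposal never explains this hypothesis, and as written your scheme would apply equally to $n=3$, where the paper explicitly says the method fails.

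Your explanation of the regime change is also off the mark: the threshold $r=n(n-1)$ is not about positivity of the horizontal part of $K_{W/B}+(n-1)\mathfrak d$, but is the existence bound for $\widetilde\varphi$-vertical $(-an)$-curves in $\widetilde R$ (Corollary~\ref{gcor} and the proposition preceding it, combined with the normalization $\operatorname{mult}_x(R_h)\le r/2$ of Lemma~\ref{eltrlem}). For $n\le r<n(n-1)$ one has $j(F_p)=0$ for every $p$, hence $\varepsilon=0$, and one may take the larger $\mu$ of case (1), for which $B_n=0$ and the estimate collapses to $A_n\alpha_0^{+}\ge0$; for $r\ge n(n-1)$ the $(-1)$-curves can genuinely occur and the weaker constant of case (2) is forced by the bookkeeping above. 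So the two ranges are not two different ``worst germs'' in a single local optimization, but the dichotomy of whether the contraction $\rho$ can be nontrivial at all. To repair your proof you would need to supply the analogue of Lemma~\ref{jplem} (or some substitute bounding $\varepsilon(F_p)$ by the local singularity indices) and the vanishing statement $j(F_p)=0$ in the low range; without these, the positive $\varepsilon$-contribution to $\mathrm{Ind}(F_p)$ is unbounded by your data and the proposed inequality $\sum_p\mathrm{Ind}(F_p)\le C\chi_f$ does not follow.
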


\noindent
This in particular implies that we cannot have a Kodaira fibration when $h=0$ and $n\ge 4$, even if we consider not only Kodaira's original constructions but also such extended category of primitive cyclic covering fibrations. Unfortunately, it seems that our method does not work well when $n=3$, and we fail to obtain an upper bound. We leave it as a future study. An upper bound for the slope of hyperelliptic fibrations was obtained by Matsusaka in \cite{ma} which was improved by Xiao \cite{xiao_book}.

\begin{acknowledgement}\normalfont
The author expresses his sincere gratitude to Professor Kazuhiro Konno for his valuable advice and warm encouragement.
\end{acknowledgement}

\section{Primitive cyclic covering fibrations}
Throughout the paper, $n$ denotes an integer greater than $1$.
We begin with the following elementary lemma. Note that it does not hold for $n=3$, as the case $a=b=1$ shows.

\begin{lem} \label{easylem}
Let $n$ be a positive integer and $a$, $b$ integers such that ${\rm gcd}(a,b,n)=1$. If $n\ge 4$, then either $a+2b\notin n\mathbb{Z}$ or $2a+b\notin n\mathbb{Z}$.
\end{lem}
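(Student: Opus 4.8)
The plan is to argue by contradiction: I would assume that both $a+2b\in n\mathbb{Z}$ and $2a+b\in n\mathbb{Z}$ hold simultaneously and derive a divisibility constraint forcing $n\le 3$. The first step is to take two elementary $\mathbb{Z}$-linear combinations of these congruences in order to decouple $a$ and $b$ modulo $n$. Taking the difference of $a+2b\equiv 0$ and $2a+b\equiv 0\pmod{n}$ gives $a\equiv b\pmod{n}$; substituting this relation back into either congruence then yields $3a\equiv 0\pmod{n}$ (equivalently $3b\equiv 0\pmod{n}$).

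The second step is to bring in the coprimality hypothesis. Because $a\equiv b\pmod{n}$, every common divisor of $a$ and $n$ also divides $b$, so $\gcd(a,b,n)=\gcd(a,n)$, and the assumption $\gcd(a,b,n)=1$ collapses to $\gcd(a,n)=1$. Combining this with $n\mid 3a$ forces $n\mid 3$, hence $n\in\{1,3\}$, which contradicts $n\ge 4$ and finishes the proof. The same computation explains the accompanying remark: when $n=3$ the pair $a=b=1$ has $\gcd(a,b,n)=1$ yet $a+2b=2a+b=3\in 3\mathbb{Z}$, so the statement genuinely breaks down there.

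Since the argument is elementary, there is no serious obstacle; the one point needing care is the reduction of $\gcd(a,b,n)=1$ to $\gcd(a,n)=1$, which relies precisely on the relation $a\equiv b\pmod{n}$ established in the first step. The hypothesis $n\ge 4$ is used only at the very last line, to exclude the genuine exception $n=3$.
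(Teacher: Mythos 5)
Your proof is correct, and it is slightly tidier than the one in the paper. Both arguments begin the same way: assume $a+2b\equiv 2a+b\equiv 0\pmod n$ and take linear combinations (the paper adds and subtracts to get $3(a+b)\in n\mathbb{Z}$ and $a-b\in n\mathbb{Z}$; you subtract to get $a\equiv b\pmod n$ and substitute to get $3a\equiv 0\pmod n$, which is the same information). The two proofs diverge at the finish. The paper splits into cases: if $3\nmid n$ it cancels the $3$ to get $a+b\in n\mathbb{Z}$ and hence $a,b\in n\mathbb{Z}$; if $3\mid n$ it writes $n=3k$ with $k\ge 2$, computes $2a+b=3(b+2kl)$, and deduces $a,b\in k\mathbb{Z}$, contradicting $\gcd(a,b,n)=1$ in either case. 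You avoid the case division entirely via the observation that $a\equiv b\pmod n$ forces $\gcd(a,b,n)=\gcd(a,n)=1$, after which a single application of Euclid's lemma to $n\mid 3a$ yields $n\mid 3$, i.e.\ $n\in\{1,3\}$. Your route is uniform, needs no discussion of whether $3\mid n$, and as a bonus isolates exactly the exceptional values of $n$, which cleanly explains the paper's remark that $a=b=1$ breaks the statement at $n=3$. The gcd-collapsing step is the one place requiring care, and you justify it correctly (any common divisor of $a$ and $n$ divides $b=a+mn$).
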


\begin{proof}
Suppose on the contrary $a+2b\in n\mathbb{Z}$ and $2a+b\in n\mathbb{Z}$. 
Then we have $3(a+b)\in n\mathbb{Z}$ and $a-b\in n\mathbb{Z}$ by adding and substituting them. 
If $n\notin 3\mathbb{Z}$, it follows from $3(a+b) \in n\mathbb{Z}$ that $a+b\in n\mathbb{Z}$, and we conclude $a, b\in n\mathbb{Z}$, which contradicts ${\rm gcd}(a,b,n)=1$. 
So we can assume $n\in 3\mathbb{Z}$. Put $n=3k$ with an integer $k\ge 2$. 
Since $a-b\in n\mathbb{Z}$, we may write $a=b+3kl$ with an integer $l$. 
Then $2a+b=3(b+2kl)$. Since $2a+b\in n\mathbb{Z}=3k\mathbb{Z}$, we have $b\in k\mathbb{Z}$. 
Then, it follows $a\in k\mathbb{Z}$, contradicting ${\rm gcd}(a,b,n)=1$.
\end{proof}

Let $X$ be a smooth projective surface and $\sigma\in \mathrm{Aut}(X)$ a holomorphic  automorphism of $X$ of order $n$. 
We denote by $\mathrm{Fix}(\sigma)$ the set of all fixed points of $\sigma$.

Take a point $x\in {\rm Fix}(\sigma)$ and an open neighborhood of $U$ of $x$ such that $\sigma(U)=U$. 
Let $(z_1,z_2)$ be a system of local coordinates on $U$ with $x=(0,0)$, and write $\sigma(z)=(\sigma_1(z_1,z_2), \sigma_2(z_1,z_2))$. If $\sigma_i(z_1,z_2)=a_{i,1}z_1+a_{i,2}z_2+\cdots$ is the expansion around $x$, then the Jacobian matrix at $x$ is given by
$$
(J\sigma)_x=\left(
\begin{array}{ccc}
a_{1,1} & a_{1,2} \\
a_{2,1} & a_{2,2} 
\end{array}
\right).
$$
Since $\sigma^n={\rm Id}$, we may assume that
$$
\left(
\begin{array}{ccc}
a_{1,1} & a_{1,2} \\
a_{2,1} & a_{2,2} 
\end{array}
\right)=\left(
\begin{array}{ccc}
\zeta^{k_1} & 0 \\
0 & \zeta^{k_2} 
\end{array}
\right),
$$
where $\zeta=\exp(2\pi\sqrt{-1}/n)$ and $k_1,k_2$ are integers satisfying $0\leq k_1\leq k_2\leq n-1$. Since $\sigma\neq \mathrm{Id}$, we have $k_2>0$. 
It is clear that $x$ is a smooth point on a $1$-dimensional fixed locus 
when $k_1=0$, and that it is an isolated fixed point when $k_1>0$. 
Since $\sigma$ is of order $n$, we have $\mathrm{gcd}(k_1,k_2,n)=1$. 
If the Jacobian matrix at $x$ has the canonical form described above, then we call $x$ a fixed point of type $(k_1,k_2)$.
Let $\rho_1\colon X_1\to X$ be the blow-up at a fixed point $x$ of type $(k_1,k_2)$. 
Put $E:=\rho_1^{-1}(x)$ and let $\sigma_1$ be the automorphism of $X_1$ of order $n$ induced by $\sigma$. 
Then one easily sees that $E\subset \mathrm{Fix}(\sigma_1)$ when $k_1=k_2$, and that there are exactly two isolated fixed points on 
$E$ of respective types $(k_1,k_2-k_1)$ and $(k_1-k_2,k_2)$ when 
$k_1\neq k_2$. 
With this remark, the following can be shown by using Lemma \ref{easylem} repeatedly.

\begin{lem} \label{kktypelem}
Let $X$ be a smooth projective surface with an automorphism $\sigma$ of order $n$. 
If there exists a birational morphism $\rho\colon \widetilde{X}\rightarrow X$ such that the automorphism $\widetilde{\sigma}$ on $\widetilde{X}$ induced by $\sigma$ has no isolated fixed points, then either $n\le 3$ or any isolated fixed point of $\sigma$ is of type $(k,k)$ for some $k$ $(0<k<n, \mathrm{gcd}(k,n)=1)$.
\end{lem}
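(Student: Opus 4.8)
The plan is to argue the contrapositive: assuming $n\ge 4$, I will show that if $\sigma$ possesses an isolated fixed point whose type $(k_1,k_2)$ satisfies $k_1\not\equiv k_2\pmod n$, then no birational morphism $\rho$ can remove all isolated fixed points, contradicting the hypothesis. Since the whole question is local around a fixed point and $\rho$ is $\sigma$-equivariant, I would first record the standard fact that $\rho\colon\widetilde X\to X$ factors as a composition of blow-ups whose centres are $\sigma$-orbits; the number $N$ of these blow-ups will be the induction parameter.

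The core is the following claim, proved by induction on $N$: if $\sigma$ on $X$ has an isolated fixed point $x$ of type $(k_1,k_2)$ with $k_1\not\equiv k_2\pmod n$, $\gcd(k_1,k_2,n)=1$ and $n\ge 4$, then the induced $\widetilde\sigma$ on $\widetilde X$ still has an isolated fixed point. For $N=0$ the morphism is an isomorphism and $x$ itself lifts. For the inductive step I would peel off the first blow-up $\beta\colon X_1\to X$ along an orbit $O$ and write $\rho=\beta\circ\gamma$, where $\gamma$ uses $N-1$ blow-ups. If $x\notin O$, then $\beta$ is an isomorphism near $x$, so $x$ lifts to an isolated fixed point of the same type on $X_1$ and the induction hypothesis applies to $\gamma$. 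If $x\in O$, then $O=\{x\}$ because $x$ is fixed, so $\beta$ is the blow-up at $x$, and the excerpt's computation shows that the exceptional curve carries exactly two isolated fixed points, of types $(k_1,k_2-k_1)$ and $(k_1-k_2,k_2)$.

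Here is where Lemma~\ref{easylem} enters. Both children are again isolated (their coordinates are $\not\equiv 0$ since $0<k_i<n$ and $k_1\ne k_2$) and satisfy the gcd condition, which is manifestly preserved. The first child is of type $(k,k)$ exactly when $2k_1-k_2\equiv 0$, and the second exactly when $k_1-2k_2\equiv 0$. Setting $a=k_1$ and $b=-k_2$, these read $2a+b\equiv 0$ and $a+2b\equiv 0$ with $\gcd(a,b,n)=1$, which Lemma~\ref{easylem} forbids simultaneously once $n\ge 4$. Hence at least one child is an isolated fixed point that is again \emph{not} of type $(k,k)$; applying the induction hypothesis to it through $\gamma$ produces an isolated fixed point of $\widetilde\sigma$. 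This is the sense in which Lemma~\ref{easylem} is used repeatedly: once at every level of the blow-up tower.

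Finally I would assemble the contrapositive. If $n\ge 4$ and some isolated fixed point of $\sigma$ were not of type $(k,k)$, the claim applied to the given $\rho$ would force $\widetilde\sigma$ to have an isolated fixed point, contradicting the hypothesis; hence every isolated fixed point is of type $(k,k)$, and the conditions $0<k<n$, $\gcd(k,n)=1$ follow from the normalisation already in place. I expect the main obstacle to be bookkeeping rather than ideas: justifying the $\sigma$-equivariant factorisation of $\rho$ and making sure that an isolated fixed point genuinely survives each stage, so that the induction target remains an isolated, non-$(k,k)$ point. The one genuinely clever point is recognising that the correct inputs to Lemma~\ref{easylem} are $k_1$ and $-k_2$, which converts the ``$(k,k)$-child'' conditions $2k_1-k_2\equiv 0$ and $k_1-2k_2\equiv 0$ into the form $2a+b\equiv 0$, $a+2b\equiv 0$ treated there.
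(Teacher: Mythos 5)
Your proposal is correct and follows essentially the same route as the paper, which proves this lemma only via the preceding remark on the child types $(k_1,k_2-k_1)$ and $(k_1-k_2,k_2)$ together with the instruction to apply Lemma~\ref{easylem} repeatedly; your induction on the number of blow-ups, with the substitution $a=k_1$, $b=-k_2$ converting the two ``both children of type $(k,k)$'' congruences into $2a+b\equiv a+2b\equiv 0 \pmod{n}$, is exactly the intended elaboration. The points left implicit on both sides (the equivariant factorization of $\rho$ into orbit blow-ups, the persistence of an isolated non-$(k,k)$ point at each stage, and the preservation of $\gcd(k_1,k_2,n)=1$) are routine, and you identify and handle them correctly.
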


We apply the above observation to the situation we are interested in. Before going further, we need some preparations.

Let $Y$ be a smooth projective surface and $R$ an effective divisor on $Y$ which is divisible by $n$ in the Picard group $\mathrm{Pic}(Y)$, that is, $R$ is linearly equivalent to $n\mathfrak{d}$ for some divisor $\mathfrak{d}\in \mathrm{Pic}(Y)$. 
Then we can construct a finite $n$-sheeted covering of $Y$ with branch locus $R$ as follows. Put
$\mathcal{A}=\bigoplus_{j=0}^{n-1}\mathcal{O}_Y(-j\mathfrak{d})$
and introduce a graded $\mathcal{O}_Y$-algebra structure on $\mathcal{A}$ by multiplying the section of $\mathcal{O}_Y(n\mathfrak{d})$ defining $R$. 
We call $Z:=\mathrm{Spec}_Y(\mathcal{A})$ equipped with the natural surjective morphism $\varphi\colon Z\to Y$ a  {\em classical 
$n$-cyclic covering} of $Y$ branched over $R$, according to \cite{bar}. 
Locally, $Z$ is defined by $z^n=r(x,y)$, where $r(x,y)$ denotes the 
local analytic equation of $R$. 
From this, one sees that $Z$ is normal if and only if $R$ is reduced, and $Z$ is smooth if and only if so is $R$. 
When $Z$ is smooth, we have
\begin{equation}\label{cyceq}
 \varphi^*R=nR_0,\; K_Z=\varphi^*K_Y+(n-1)R_0,\; \mathrm{Aut}(Z/Y)\simeq \mathbb{Z}/n\mathbb{Z}
\end{equation}
where $R_0$ is the effective divisor (usually called the ramification 
divisor) on $Z$ defined locally by $z=0$, and $\mathrm{Aut}(Z/Y)$ is the 
covering transformation group for $\varphi$.

\begin{defn}\normalfont \label{primdef}
A relatively minimal fibration $f\colon S\to B$ of genus $g\geq 2$ is called a 
primitive cyclic covering fibration of type $(g,h,n)$, if there exist a (not 
necessarily relatively minimal) fibration 
$\widetilde{\varphi}\colon \widetilde{W}\to B$ of genus $h\geq 0$, and a 
 classical $n$-cyclic covering 
$$
\widetilde{\theta}\colon \widetilde{S}=
\mathrm{Spec}_{\widetilde{W}}\left(\bigoplus_{j=0}^{n-1}
\mathcal{O}_{\widetilde{W}}(-j\widetilde{\mathfrak{d}})\right)\to \widetilde{W}
$$ 
branched over a smooth curve $\widetilde{R}\in |n\widetilde{\mathfrak{d}}|$
for some $n\geq 2$ and $\widetilde{\mathfrak{d}}\in 
\mathrm{Pic}(\widetilde{W})$ 
such that 
$f$ is the relatively minimal 
model of $\widetilde{f}:=\widetilde{\varphi}\circ \widetilde{\theta}$.
\end{defn}

\begin{rem}
Note that a finite cyclic covering between complex manifolds is not 
necessarily obtained in this way. For example, the most typical cyclic $n$-sheeted 
covering between complex lines given by $z\mapsto z^n$ branches over 
only two points ($0$ and $\infty$). This is why we put the 
 adjective ``primitive'' to our cyclic covering fibrations.
\end{rem}

Let $f\colon S\to B$ be a primitive cyclic covering fibration of type $(g,h,n)$. 
We freely use the notation in Definition \ref{primdef}. 
Let $\widetilde{F}$ and $\widetilde{\Gamma}$ be general fibers of 
$\widetilde{f}$ and $\widetilde{\varphi}$, respectively. 
Then the restriction map 
$\widetilde{\theta}|_{\widetilde{F}}\colon \widetilde{F}\to 
\widetilde{\Gamma}$ is a classical $n$-cyclic covering branched over 
$\widetilde{R}\cap \widetilde{\Gamma}$. 
Since the genera of $\widetilde{F}$ and $\widetilde{\Gamma}$ are $g$ and 
$h$, respectively, the Hurwitz formula gives us
\begin{equation}\label{r}
r:=\widetilde{R}\widetilde{\Gamma}=\frac{2(g-1-n(h-1))}{n-1}.
\end{equation}
Note that $r$ is a multiple of $n$. 
Let $\widetilde{\sigma}$ be a generator of $\mathrm{Aut}(\widetilde{S}/\widetilde{W})
\simeq \mathbb{Z}/n\mathbb{Z}$ and $\rho\colon \widetilde{S}\to S$ the natural 
birational morphism. 
By assumption, $\mathrm{Fix}(\widetilde{\sigma})$ is a disjoint 
union of smooth curves and $\widetilde{\theta}(\mathrm{Fix}(\widetilde{\sigma}))=\widetilde{R}$. Let $\varphi\colon W\to B$ be a relatively minimal model of 
$\widetilde{\varphi}$ and $\widetilde{\psi}\colon \widetilde{W}\to W$ the 
natural birational morphism. 
Since $\widetilde{\psi}$ is a succession of blow-ups, 
we can write $\widetilde{\psi}=\psi_1\circ \cdots \circ \psi_N$, where 
$\psi_i\colon W_i\to W_{i-1}$ denotes the blow-up at $x_i\in W_{i-1}$ 
$(i=1,\dots,N)$ with $W_0=W$ and $W_N=\widetilde{W}$. 
We define reduced curves $R_i$ on $W_i$ inductively as $R_{i-1}=(\psi_i)_*R_i$ starting 
from $R_N=\widetilde{R}$ down to $R_0=:R$. 
We also put $E_i=\psi_i^{-1}(x_i)$ and 
$m_i=\mathrm{mult}_{x_i}(R_{i-1})$ for $i=1,2,\dots, N$.

\begin{lem}\label{multlem}
With the above notation, the following hold for any $i=1,\dots, N$.

\smallskip

\noindent
$(1)$ Either $m_i\in n\mathbb{Z}$ or $m_i\in n\mathbb{Z}+1$.
Moreover, $m_i\in n\mathbb{Z}$ holds if and only if $E_i$ is not contained in $R_i$.

\smallskip

\noindent
$(2)$ 
 $R_i=\psi_i^*R_{i-1}-n\displaystyle{\left[\frac{m_i}{n}\right]}E_i$, 
 where $[t]$ is the greatest integer not exceeding $t$.

\smallskip

\noindent
$(3)$ There exists $\mathfrak{d}_i\in \mathrm{Pic}(W_i)$ such that 
$\mathfrak{d}_i=\psi_i^*\mathfrak{d}_{i-1}$ and $R_i\sim n\mathfrak{d}_i$, $\mathfrak{d}_N=\widetilde{\mathfrak{d}}$.
\end{lem}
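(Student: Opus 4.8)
The strategy is to derive all three assertions from a single structural input: the branch divisor is divisible by $n$ at the top of the tower, this divisibility is inherited all the way down, and the reduced curves $R_i$ stay reduced throughout. No local analysis of the covering is needed.

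First I would dispose of $(3)$ by descending induction on $i$. At the top $R_N=\widetilde R\sim n\widetilde{\mathfrak d}$, so put $\mathfrak d_N=\widetilde{\mathfrak d}$. Since $\psi_i$ is birational, pushforward of divisors preserves linear equivalence, whence $R_{i-1}=(\psi_i)_*R_i\sim n\,(\psi_i)_*\mathfrak d_i$; one sets $\mathfrak d_{i-1}:=(\psi_i)_*\mathfrak d_i$. Thus $R_i\sim n\mathfrak d_i$ for every $i$, and the sharper relation $\mathfrak d_i=\psi_i^*\mathfrak d_{i-1}-[m_i/n]E_i$ falls out once $(2)$ is in hand.

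For $(1)$ and $(2)$ the crucial elementary point is that each $R_i$ is reduced: $R_N=\widetilde R$ is smooth, and the pushforward of a reduced divisor under a single blow-down is again reduced, since distinct non-exceptional prime components have distinct images. Consequently $R_i$, being reduced, equals $\overline{R_{i-1}}+\varepsilon_iE_i$, where $\overline{R_{i-1}}$ is the strict transform and $\varepsilon_i\in\{0,1\}$, with $\varepsilon_i=1$ precisely when $E_i\subset R_i$. Intersecting with $E_i$ and using $\overline{R_{i-1}}\cdot E_i=m_i$ and $E_i^2=-1$ gives $R_i\cdot E_i=m_i-\varepsilon_i$. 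On the other hand $R_i\sim n\mathfrak d_i$ forces $R_i\cdot E_i=n\,(\mathfrak d_i\cdot E_i)\equiv 0\pmod n$. Hence $m_i\equiv\varepsilon_i\pmod n$ with $\varepsilon_i\in\{0,1\}$, which is exactly $(1)$: either $m_i\in n\mathbb Z$ or $m_i\in n\mathbb Z+1$, with $m_i\in n\mathbb Z$ iff $\varepsilon_i=0$ iff $E_i\not\subset R_i$. Since then $m_i-\varepsilon_i=n[m_i/n]$, rewriting $R_i=\psi_i^*R_{i-1}-m_iE_i+\varepsilon_iE_i$ yields $(2)$, and substituting into the expression above completes $(3)$.

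The only step carrying real weight is the reducedness of the intermediate $R_i$, since this is what confines $\varepsilon_i$ to $\{0,1\}$ and thereby upgrades the congruence $m_i\equiv\varepsilon_i\pmod n$ into the stated dichotomy. I would stress that this $n$-divisibility argument requires no fixed-point computation: it sidesteps Lemma \ref{kktypelem} altogether and, in contrast to the route through Lemma \ref{easylem}, is completely uniform in $n$, covering in particular the case $n=3$ excluded there.
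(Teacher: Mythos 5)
Your argument is correct and is essentially the paper's own proof: both rest on the reducedness of every $R_i$ (preserved under pushforward from the smooth $\widetilde{R}$) together with the divisibility $R_i\sim n\mathfrak{d}_i$ propagated down the tower by pushing $\mathfrak{d}_i$ forward, and your intersection computation $R_i\cdot E_i=m_i-\varepsilon_i\equiv 0\pmod{n}$ is just the paper's comparison of the $E_i$-coefficient in $\mathrm{Pic}(W_i)=\psi_i^*\mathrm{Pic}(W_{i-1})\oplus\mathbb{Z}[E_i]$, where it writes $\mathfrak{d}_i=\psi_i^*\mathfrak{d}_{i-1}-d_iE_i$ and concludes $m_i=nd_i$ or $m_i=nd_i+1$ according to whether $E_i\subset R_i$. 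One small remark: your closing contrast is moot, since the paper's proof of this lemma likewise uses neither Lemma~\ref{easylem} nor Lemma~\ref{kktypelem} (those serve only the fixed-point analysis elsewhere), so no additional uniformity in $n$ is gained here.
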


\begin{proof}
Since $\widetilde{R}=R_N$ is reduced, every $R_i$ is reduced.
Set $\mathfrak{d}_N=\widetilde{\mathfrak{d}}$.
Since $\mathrm{Pic}(W_N)=\psi_N^*\mathrm{Pic}(W_{N-1})\bigoplus 
 \mathbb{Z}[E_N]$, there exist $\mathfrak{d}_{N-1}\in \mathrm{Pic}(W_{N-1})$ 
 and $d_N\in \mathbb{Z}$ such that $\mathfrak{d}_N=\psi_N^*\mathfrak{d}_{N-1}-d_NE_N$.
Then, inductively, we can take $\mathfrak{d}_{i-1}\in \mathrm{Pic}(W_{i-1})$ 
 and $d_i\in \mathbb{Z}$ such that 
 $\mathfrak{d}_i=\psi_i^*\mathfrak{d}_{i-1}-d_iE_i$, for $i=N, N-1,\dots, 1$.
Since $\widetilde{R}=R_N\sim 
 n\widetilde{\mathfrak{d}}=\psi_N^*(n\mathfrak{d}_{N-1})-nd_NE_N$ and 
 $R_{N-1}=(\psi_N)_*R_N$, we get $R_{N-1}\sim n\mathfrak{d}_{N-1}$, where the 
 symbol 
$\sim$ means linear equivalence of divisors.
Then, by induction, we have $R_i\sim n\mathfrak{d}_i$ for any $i$.

Now, if $E_i$ is not contained in $R_i$, then $R_i$ is the proper 
 transform of $R_{i-1}$ by $\psi_i$, and hence we have $m_i=nd_i$.
Similarly, if $E_i$ is contained in $R_i$, then, since $R_i$ is reduced, 
$R_i-E_i$ is the proper transform of $R_{i-1}$ by $\psi_i$, and hence 
we have $m_i=d_in+1$.
In either case, $d_i=[m_i/n]$.
\end{proof}

A smooth rational curve with self-intersection number $-k$ is called a $(-k)$-curve.
An irreducible curve on $\widetilde{S}$ is called $\widetilde{f}$-{\em vertical}
 if it is contained in a fiber of $\widetilde{f}$, 
otherwise, it is called $\widetilde{f}$-{\em horizontal}.

\begin{lem}\label{exclem}
Let $E$ be an $\widetilde{f}$-vertical $(-1)$-curve on $\widetilde{S}$ and put $L=\widetilde{\theta}(E)$.

\smallskip

\noindent
$(1)$ If $E\subset \mathrm{Fix}(\widetilde{\sigma})$, then $L$ is a 
 $\widetilde{\varphi}$-vertical $(-n)$-curve and 
 contained in $\widetilde{R}$.
Conversely, for any $\widetilde{\varphi}$-vertical $(-n)$-curve 
 $L'\subset \widetilde{R}$, there exists a $\widetilde{f}$-vertical $(-1)$-curve $E'$ such that 
$E'\subset \mathrm{Fix}(\widetilde{\sigma})$ and $\widetilde{\theta}^*L'=nE'$.

\smallskip

\noindent
$(2)$ If $E\not\subset \mathrm{Fix}(\widetilde{\sigma})$, then $L$ is a 
 $\widetilde{\varphi}$-vertical $(-1)$-curve and there exist
 $\widetilde{f}$-vertical $(-1)$-curves $E_2,\dots,E_n$ such that 
$\widetilde{\theta}^*L=E_1+E_2+\cdots+E_n$ and $E_1, E_2, \dots, E_n$ 
 are disjoint, where $E_1=E$.
Moreover, if $\overline{\rho}\colon \widetilde{S}\to \overline{S}$ and $\overline{\psi}\colon \widetilde{W}\to 
 \overline{W}$ denote the 
 contractions of $\cup_{i=1}^nE_i$ and $L$, respectively, then there 
 exists a natural classical $n$-cyclic covering 
 $\overline{\theta}\colon \overline{S}\to \overline{W}$ branched over 
 $\overline{R}=\overline{\psi}_*\widetilde{R}$ such that 
$\overline{\theta}\circ \overline{\rho}=\overline{\psi}\circ \widetilde{\theta}$.
\end{lem}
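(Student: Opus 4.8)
The plan is to push both parts through the elementary covering relations $\widetilde{\theta}^*\widetilde{R}=nR_0$ and $K_{\widetilde{S}}=\widetilde{\theta}^*K_{\widetilde{W}}+(n-1)R_0$ (where $R_0=\mathrm{Fix}(\widetilde{\sigma})$ and $\widetilde{\theta}|_{R_0}\colon R_0\to\widetilde{R}$ is an isomorphism), the projection formula, and the transitivity of the Galois action. Since $\widetilde{\theta}$ is finite, $L=\widetilde{\theta}(E)$ is automatically $\widetilde{\varphi}$-vertical in both cases. For $(1)$, I would observe that $E\subset R_0$ forces $E$ to be a component of $R_0$ and $L$ the corresponding component of $\widetilde{R}$, so $L\cong E\cong\mathbb{P}^1$, $L\subset\widetilde{R}$, and $\widetilde{\theta}^*L=nE$ by total ramification; then $nL^2=(\widetilde{\theta}^*L)^2=n^2E^2=-n^2$ gives $L^2=-n$. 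The converse is symmetric: for a $\widetilde{\varphi}$-vertical $(-n)$-curve $L'\subset\widetilde{R}$ I would set $E'=(\widetilde{\theta}^{-1}L')_{\mathrm{red}}\subset R_0$, whence $\widetilde{\theta}^*L'=nE'$ and $n^2(E')^2=n(L')^2=-n^2$ yields $(E')^2=-1$.

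For $(2)$ I would first note $L\not\subset\widetilde{R}$: otherwise $(\widetilde{\theta}^{-1}L)_{\mathrm{red}}\subset R_0=\mathrm{Fix}(\widetilde{\sigma})$ would contain $E$, contradicting $E\not\subset\mathrm{Fix}(\widetilde{\sigma})$. Hence $\widetilde{\theta}$ is \'etale over the generic point of $L$, $\widetilde{\theta}^*L$ is reduced, and, the Galois group acting transitively on the components over the irreducible $L$, its support is a single orbit $E=E_1,\dots,E_t$ with $E_i=\widetilde{\sigma}^{i-1}(E)$, all $(-1)$-curves, $t\mid n$. Writing $d=n/t=\deg(\widetilde{\theta}|_E)$, the whole point reduces to proving $d=1$. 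The engine is the projection-formula identity
\[
dL^2=\widetilde{\theta}^*L\cdot E=E^2+\sum_{k=1}^{t-1}E\cdot\widetilde{\sigma}^k(E)=-1+\sum_{k=1}^{t-1}\alpha_k,\qquad \alpha_k:=E\cdot\widetilde{\sigma}^k(E).
\]

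The step I expect to be the main obstacle is the divisibility $d\mid\alpha_k$. Here I would use that distinct components of $\widetilde{\theta}^{-1}(L)$ can meet only along $R_0$ (since $\widetilde{\theta}$ is \'etale off $R_0$), so each $q\in E\cap\widetilde{\sigma}^k(E)$ lies on $R_0=\mathrm{Fix}(\widetilde{\sigma}^t)$; in local coordinates $(u,z)$ at $q$ with $R_0=\{z=0\}$ and $\widetilde{\sigma}\colon(u,z)\mapsto(u,\zeta z)$, both $E$ and $\widetilde{\sigma}^k(E)$ are smooth branches stabilized by $\widetilde{\sigma}^t$ and distinct from $R_0$, hence graphs $u=\phi(z)$ tangent to the nontrivial $\widetilde{\sigma}^t$-eigendirection with $\phi$ fixed by $z\mapsto\zeta^t z$, i.e.\ $\phi\in\mathbb{C}[[z^{d}]]$; thus $i_q=\mathrm{ord}_z(\phi-\phi')\in d\mathbb{Z}$. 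Summing gives $d\mid\alpha_k$, so $d\mid(dL^2+1)$ and therefore $d=1$, $t=n$. To finish I would compute from $K_{\widetilde{S}}\cdot E=-1$, $R_0\cdot E=\tfrac1n\widetilde{R}\cdot L$ and adjunction that $L^2=-1+\tfrac{n-1}{n}(\widetilde{R}\cdot L)$; since $L$ is vertical and not a whole fibre (a fibre of $\widetilde{f}$ has arithmetic genus $g\ge2$, incompatible with $\widetilde{\theta}^*L$ being a sum of $(-1)$-curves), $L^2<0$, which together with $n\mid\widetilde{R}\cdot L$ forces $\widetilde{R}\cdot L=0$. Then $L^2=-1$, $L\cap\widetilde{R}=\emptyset$, and $\widetilde{\theta}^*L=E_1+\cdots+E_n$ with the $E_i$ pairwise disjoint.

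For the final assertion I would contract the $(-1)$-curve $L$ by $\overline{\psi}\colon\widetilde{W}\to\overline{W}$. Because $L\cdot\widetilde{R}=0$, the center $\overline{\psi}(L)$ avoids $\overline{R}:=\overline{\psi}_*\widetilde{R}$, which is smooth with $\widetilde{R}=\overline{\psi}^*\overline{R}$; because $L\cdot\widetilde{\mathfrak{d}}=\tfrac1n L\cdot\widetilde{R}=0$, the class $\widetilde{\mathfrak{d}}$ descends to some $\overline{\mathfrak{d}}\in\mathrm{Pic}(\overline{W})$ with $\overline{R}\sim n\overline{\mathfrak{d}}$. The classical $n$-cyclic covering $\overline{\theta}\colon\overline{S}\to\overline{W}$ attached to $(\overline{R},\overline{\mathfrak{d}})$ then has $\overline{S}$ smooth (as $\overline{R}$ is smooth), and since $\overline{\psi}\circ\widetilde{\theta}$ contracts exactly $\widetilde{\theta}^{-1}(L)=\bigcup_iE_i$ (the fibre over $\overline{\psi}(L)\notin\overline{R}$ being $n$ reduced points) while agreeing with $\overline{\theta}$ elsewhere, the universal property of the blow-down $\overline{\rho}$ gives $\overline{\theta}\circ\overline{\rho}=\overline{\psi}\circ\widetilde{\theta}$, as required.
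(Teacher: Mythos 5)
Your part (1) and your final blow-down argument are essentially the paper's own: total ramification gives $\widetilde{\theta}^*L=nE$, the self-intersection computation gives $L^2=-n$, the converse is symmetric, and the descent of $\widetilde{\mathfrak{d}}$ along $\overline{\psi}$ is exactly how the paper constructs $\overline{\theta}$. In part (2), however, you take a genuinely different route to the crucial point that $\widetilde{\theta}^*L$ has exactly $n$ components. The paper argues globally: since $g\ge 2$ and $S$ is relatively minimal, every component $E_i$ over $L$ must be contracted by $\rho$; curves contracted by a birational morphism form a negative definite configuration, so the $(-1)$-curves $E_i$ are pairwise disjoint, whence $nL^2=\bigl(\sum_i E_i\bigr)^2=-t$ forces $t=n$, and $L\cap\widetilde{R}=\emptyset$ then follows from disjointness (a point of $L\cap\widetilde{R}$ has a single preimage, which would lie on all the $E_i$). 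You instead kill the stabilizer $\langle\widetilde{\sigma}^t\rangle$ by the local divisibility $d\mid\alpha_k$ and deduce disjointness only afterwards, from $\widetilde{R}\cdot L=0$. Your route avoids relative minimality and negative definiteness entirely, and it honestly treats the intermediate stabilizers $1<t<n$ that the paper's stated dichotomy (``irreducible or $n$ components'') passes over; the paper's argument is shorter but leans on the global geometry of $\rho$.

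There is one step where your justification, as written, does not suffice. You claim distinct components of $\widetilde{\theta}^{-1}(L)$ meet only along $R_0$ ``since $\widetilde{\theta}$ is \'etale off $R_0$''. \'Etaleness does not give this: at $q\notin R_0$ the map is a local isomorphism, so two components through $q$ simply map to two distinct local branches of $L$ at $\widetilde{\theta}(q)$ --- which is possible precisely when $L$ is singular there, and at this stage of your argument you have not yet shown $L$ is smooth (that only comes out at the end, via $p_a(L)=0$). The conclusion $d\mid\alpha_k$ is nevertheless true and your proof is repairable by a one-line supplement: off $R_0$ the subgroup $\langle\widetilde{\sigma}^t\rangle$ of order $d$ acts freely (its fixed locus is exactly $R_0$) and preserves both $E$ and $\widetilde{\sigma}^k(E)$, so the off-$R_0$ points of $E\cap\widetilde{\sigma}^k(E)$ come in orbits of size $d$ with equal local intersection numbers, and their total contribution is divisible by $d$; combined with your eigendirection/graph computation $\phi,\phi'\in\mathbb{C}[[z^d]]$ at points of $R_0$, this gives $d\mid\alpha_k$ in all cases. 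A second, smaller soft spot of the same origin: your adjunction identity $L^2=-1+\frac{n-1}{n}\widetilde{R}L$ presumes $L$ smooth rational, whereas at that moment $L$ is only the birational image of $E$; you should instead write $L^2=2p_a(L)-1+\frac{n-1}{n}\widetilde{R}L$, and then $L^2<0$ together with $\widetilde{R}L\in n\mathbb{Z}_{\ge 0}$ still forces $\widetilde{R}L=0$, $p_a(L)=0$ (hence $L$ smooth) and $L^2=-1$. With these two patches your proof is complete and correct.
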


\begin{proof}
(1) Suppose that $E\subset \mathrm{Fix}(\widetilde{\sigma})$.
Then $L$ is $\widetilde{\varphi}$-vertical and $L\subset \widetilde{R}$. 
From the last, it follows  $\widetilde{\theta}^*L=nE$.
Since $nL^2=(\widetilde{\theta}^*L)^2=n^2E^2=-n^2$, we get $L^2=-n$.
Since $\widetilde{\theta}|_E\colon E\to L$ is an isomorphism, we have 
$L\simeq \mathbb{P}^1$ implying that $L$ is a $(-n)$-curve.
The rest of (1) may be clear.

(2) Suppose that $E\not\subset \mathrm{Fix}(\widetilde{\sigma})$.
Then either $\widetilde{\theta}^*L$ is irreducible or it consists of $n$ 
 irreducible curves.
The first alternative is impossible, because, if 
 $\widetilde{\theta}^*L=E$, we would have $L^2=-1/n$ which is absurd.
Therefore, $\widetilde{\theta}^*L$ consists of $n$ irreducible curves 
$E_1,\dots,E_n$. We may assume $E_1=E$.
Note that some power of $\widetilde{\sigma}$ maps $E$ isomorphically 
 onto $E_i$ for any $i$.
Therefore, every $E_i$ is an $\widetilde{f}$-vertical $(-1)$-curve, since so is $E$.
All the $E_i$'s must be contracted by $\rho$, since $g>0$.
It follows that $E_i$'s are mutually disjoint.
In particular, we see that $L\cap \widetilde{R}=\emptyset$.
Let $\overline{\rho}\colon \widetilde{S}\to \overline{S}$ and $\overline{\psi}\colon \widetilde{W}\to 
 \overline{W}$ be the 
 contractions of $\cup_{i=1}^nE_i$ and $L$, respectively.
Then $\overline{R}=\overline{\psi}_*\widetilde{R}$ is isomorphic to 
 $\widetilde{R}$. In particular, it is smooth and $\overline{\psi}^*\overline{R}=\widetilde{R}$.
There is a uniquely determined element $\overline{\mathfrak{d}}\in \mathrm{Pic}(\overline{W})$ 
satisfying $\widetilde{\mathfrak{d}}=\overline{\psi}^*\overline{\mathfrak{d}}$.
Then we have $\overline{R}\sim n\overline{\mathfrak{d}}$.
Hence we can construct a classical $n$-cyclic covering 
$\overline{\theta}\colon \mathrm{Spec}(\bigoplus_{j=0}^{n-1}\mathcal{O}_{\overline{W}}(-j\overline{\mathfrak{d}}))
\to \overline{W}$ branched over $\overline{R}$.
Clearly, it is isomorphic to the natural map $\overline{S}\to \overline{W}$.
\end{proof}

By this lemma, we can assume from the first time that all the 
$\tilde{f}$-vertical $(-1)$-curves are contained in $\mathrm{Fix}(\widetilde{\sigma})$.
In what follows, we tacitly assume it.
We decompose $\rho\colon \widetilde{S}\to S$ into a succession of blow-ups as 
$\rho=\rho_1\circ \cdots \circ \rho_k$, where $\rho_i\colon \overline{S}_i\to \overline{S}_{i-1}$ is 
a blow-up $(i=1,\dots, k)$, $\overline{S}_k=\widetilde{S}$ and $\overline{S}_0=S$.

\begin{lem}\label{autolem}
$\widetilde{\sigma}$ induces on $\overline{S}_i$ an automorphism $\sigma_i$ of 
 order $n$ for each $i$, and the center of $\rho_i$ is a fixed point of $\sigma_{i-1}$.
\end{lem}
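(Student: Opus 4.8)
The plan is to argue by downward induction on $i$, starting from $\sigma_k:=\widetilde\sigma$ on $\overline S_k=\widetilde S$ and descending one blow-down $\rho_i$ at a time. The whole statement reduces to a single claim: \emph{the $(-1)$-curve $\mathcal E_i\subset\overline S_i$ contracted by $\rho_i$ is $\sigma_i$-invariant}. Indeed, if $\sigma_i(\mathcal E_i)=\mathcal E_i$, then $\sigma_i$ descends through the contraction of an invariant $(-1)$-curve to an automorphism $\sigma_{i-1}$ of $\overline S_{i-1}$; its order is again $n$ because $\rho_i$ is birational and $\sigma_i$ is recovered as the unique lift of $\sigma_{i-1}$, and the image point $x_i=\rho_i(\mathcal E_i)$ is then a fixed point of $\sigma_{i-1}$. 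For the base case note that $\widetilde\sigma$ lies over $\widetilde W$, so $\widetilde\theta\circ\widetilde\sigma=\widetilde\theta$ and hence $\widetilde f\circ\widetilde\sigma=\widetilde f$; moreover $\mathrm{Fix}(\widetilde\sigma)$ is the smooth, purely one-dimensional ramification divisor, so $\widetilde\sigma$ has no isolated fixed points. At $i=k$ the curve $\mathcal E_k$ is an $\widetilde f$-vertical $(-1)$-curve on $\widetilde S$, hence contained in $\mathrm{Fix}(\widetilde\sigma)$ by our standing assumption, so it is invariant and the induction starts.

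Two structural facts will be used at every step. First, since $f\colon S\to B$ is relatively minimal of genus $g\ge 2$, it carries no vertical $(-1)$-curve; consequently every $\overline f_i$-vertical $(-1)$-curve on $\overline S_i$ is exceptional for the birational morphism $\overline S_i\to S$, so it is a component of a normal-crossing tree of rational curves in which two distinct components meet in at most one point. Second, applying Lemma \ref{kktypelem} to the morphism $\widetilde S\to\overline S_i$ (whose source carries an automorphism without isolated fixed points) shows that, for $n\ge 4$, every isolated fixed point of $\sigma_i$ is of type $(k,k)$; there $d\sigma_i$ is the scalar $\zeta^{k}$, so $\sigma_i$ fixes every tangent direction and the exceptional curve produced by blowing it up is pointwise fixed. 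The small-order cases $n\le 3$ are treated directly.

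The heart of the argument is the invariance claim, which I would prove for an arbitrary $\overline f_i$-vertical $(-1)$-curve $C$. First, $C$ cannot be disjoint from $\mathrm{Fix}(\sigma_i)$: the centers building $\widetilde S$ over $\overline S_i$ are fixed points of the respective automorphisms (exactly the part of the induction already established for the higher stages), so none of them lies on $C$; hence the strict transform of $C$ in $\widetilde S$ is again an $\widetilde f$-vertical $(-1)$-curve, disjoint from $\mathrm{Fix}(\widetilde\sigma)$, contradicting the standing assumption. Thus $C$ meets $\mathrm{Fix}(\sigma_i)$ at some fixed point $p$. If $C\subset\mathrm{Fix}(\sigma_i)$ it is invariant; otherwise suppose $\sigma_i(C)\ne C$. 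Every curve in the orbit of $C$ then passes through $p$, but by the normal-crossing property at most two components pass through a single point, so the orbit has length two and $\sigma_i$ interchanges the two branches at $p$. When $p$ is isolated this is impossible, since the scalar action at a type-$(k,k)$ point preserves each tangent line and even forces the two branches to share a tangent, i.e.\ to meet with multiplicity $\ge 2$, contradicting the tree property. When $p$ lies on the one-dimensional part of $\mathrm{Fix}(\sigma_i)$, one analyses $d\sigma_i=\mathrm{diag}(1,\zeta^{k})$: a branch swap forces $\zeta^{2k}=1$, whence the linearised action has order two and $\sigma_i^{2}=\mathrm{id}$ on the connected surface $\overline S_i$, which is excluded for $n\ge 3$; the border case $n=2$ is the situation of \cite{dbl} and is handled separately. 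In all cases $\sigma_i(C)=C$, closing the induction.

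The step I expect to be the main obstacle is precisely this invariance claim, that is, excluding that $\widetilde\sigma$ permutes distinct components inside a contracted fiber tree. It is here that the hypotheses are indispensable: relative minimality of $S$ confines vertical $(-1)$-curves to normal-crossing exceptional trees, the standing assumption (via Lemma \ref{exclem}) rules out $(-1)$-curves away from the fixed locus, and Lemma \ref{kktypelem} together with the local description of blow-ups at type-$(k_1,k_2)$ points supplies the rigidity of the linearised action that forbids branch swaps. Keeping track, stage by stage, of the types of the newly created isolated fixed points is the bookkeeping that makes the induction go through.
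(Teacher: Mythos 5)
Your overall skeleton agrees with the paper's: downward induction, reduction to invariance of the contracted $(-1)$-curve, and a contradiction obtained by lifting a problematic curve to $\widetilde S$ against the standing assumption that all $\widetilde f$-vertical $(-1)$-curves lie in $\mathrm{Fix}(\widetilde\sigma)$. But where the paper excludes a moved curve, you take a detour that leaves a genuine gap at $n=2$. The paper argues (invoking the proof of Lemma~\ref{exclem}~(2)): if $\sigma_{i-1}$ moves the exceptional curve, the curves in its orbit are $(-1)$-curves that must \emph{all} be contracted to reach the relatively minimal $S$ (a vertical $(-1)$-curve meeting another one would, after contracting the other, become a vertical curve of self-intersection $\ge 0$, impossible since $g\ge 2$); hence the orbit curves are pairwise disjoint, so the moved curve meets no fixed point at all, and your own first step (disjoint from $\mathrm{Fix}(\sigma_i)$ implies a lift to $\widetilde S$ contradicting the standing assumption) finishes uniformly in $n$. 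You instead prove the curve \emph{must} meet $\mathrm{Fix}(\sigma_i)$, reduce to an orbit of length two through a fixed point $p$ via the SNC tree structure, and try to kill the branch swap by linearizing at $p$. That linear-algebra step fails precisely for $n=2$: at a point of the one-dimensional fixed locus the differential is $\mathrm{diag}(1,-1)$, which does swap the pair of transverse lines $y=\pm x$, so nothing local forbids an involution interchanging two exceptional components crossing on the fixed curve. Your sentence deferring this case to \cite{dbl} is not a proof, and the lemma is needed for $n=2$ throughout the paper (the hyperelliptic case $(g,0,2)$ is central to Theorem~\ref{slopeeq}).

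The missing idea is exactly the global disjointness of the orbit, which is what the paper's citation of Lemma~\ref{exclem}~(2) supplies; once you have it, the configuration ``moved curve meeting $\mathrm{Fix}(\sigma_i)$'' is impossible outright and the entire apparatus of Lemma~\ref{kktypelem}, fixed-point types, and tangent-direction bookkeeping becomes unnecessary for this lemma (it is needed later, for Lemmas~\ref{n3lem} and~\ref{rholem}, but not here). Two smaller points: your treatment of $n=3$ is asserted (``treated directly'') rather than carried out — it does close, either because an orbit of length $2$ cannot exist when $2\nmid n$, or because $\mathrm{diag}(\zeta,\zeta^2)$ swaps no pair of distinct lines, but you should say which; and your appeal to Lemma~\ref{kktypelem} applied to $\widetilde S\to\overline S_i$ is legitimate within the downward induction (the higher-stage automorphisms exist by inductive hypothesis), so there is no circularity there — the defect is solely the unhandled $n=2$ swap.
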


\begin{proof}
Let $E_i$ be the exceptional $(-1)$-curve of $\rho_i$.
On $\overline{S}_k=\widetilde{S}$, we put $\sigma_k=\widetilde{\sigma}$. Then we 
 have $E_k\subset \mathrm{Fix}(\sigma_k)$ by assumption.
Then $\sigma_k$ induces an automorphism $\sigma_{k-1}$ of $\overline{S}_{k-1}$ of 
 order $n$.
If $\sigma_{k-1}(E_{k-1})\neq E_{k-1}$, then, as in the proof of 
 Lemma~\ref{exclem} (2), one can show 
 $E_{k-1}\cap \mathrm{Fix}(\sigma_{k-1})=\emptyset$.
This implies that $E_{k-1}$ can be regarded as a $(-1)$-curve on 
 $\widetilde{S}$ disjoint from $\mathrm{Fix}(\widetilde{\sigma})$, 
 contradicting our assumption.
Hence $\sigma_{k-1}(E_{k-1})=E_{k-1}$ and $\sigma_{k-1}$ 
 induces an automorphism $\sigma_{k-2}$ of $\overline{S}_{k-2}$ of order $n$.
Now, an easy inductive argument shows the assertion.
\end{proof}

We put $\sigma:=\sigma_0$.
Since $\tilde{\sigma}$ has no isolated fixed points, it follows from 
Lemma~\ref{kktypelem} applied to $\rho\colon \widetilde{S}\to S$ that either $n\leq 
3$ or every isolated fixed point of $\sigma$ is of type $(l,l)$ for some $l$.
When $n=2$, it is clear that every isolated fixed point is of type $(1,1)$.
For $n=3$, we can show the following:

\begin{lem}\label{n3lem}
If $n=3$, any isolated fixed point $x$ of $\sigma$ is of type either 
$(1,1)$ or $(2,2)$.
\end{lem}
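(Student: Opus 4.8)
The plan is to exclude the only type left open by Lemma~\ref{kktypelem}, namely $(1,2)$. Indeed, an isolated fixed point of $\sigma$ has type $(k_1,k_2)$ with $0<k_1\le k_2\le 2$ and $\mathrm{gcd}(k_1,k_2,3)=1$, so it is one of $(1,1)$, $(2,2)$, $(1,2)$; it suffices to show that $(1,2)$ cannot occur. I would argue by contradiction, using the covering structure rather than a purely combinatorial count of types (which, as the failure of Lemma~\ref{easylem} for $n=3$ shows, cannot by itself rule out $(1,2)$: blowing up $(1,2)$ produces children $(1,1)$ and $(2,2)$, both of the ``good'' form, so the blow-up dynamics alone resolve such a point).

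The one structural input I would isolate first is the following: along its fixed curves, the generator $\widetilde{\sigma}$ has a single normal eigenvalue. Concretely, $\mathrm{Fix}(\widetilde{\sigma})=\widetilde{\theta}^{-1}(\widetilde{R})_{\mathrm{red}}$ is mapped isomorphically onto the smooth curve $\widetilde{R}$, and near any point of $\widetilde{R}$ the covering is $z^3=r$ with $\widetilde{\sigma}^{*}z=\zeta z$; hence $\widetilde{\sigma}$ acts on the normal direction of every one-dimensional component of $\mathrm{Fix}(\widetilde{\sigma})$ by one and the same primitive cube root of unity, i.e.\ every such component has type $(0,w_0)$ for a fixed $w_0\in\{1,2\}$ independent of the component. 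This is the feature special to a cyclic covering, as opposed to an arbitrary order-$3$ automorphism.

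Now suppose $x$ were an isolated fixed point of type $(1,2)$, and follow the blow-ups of $\rho$ over $x$ using the local behavior recalled before Lemma~\ref{kktypelem}. The first blow-up at $x$ produces an exceptional curve that is not pointwise fixed and carries exactly two isolated fixed points, of types $(1,2-1)=(1,1)$ and $(1-2,2)\equiv(2,2)\pmod 3$. Since $\widetilde{\sigma}$ has no isolated fixed points, each of these must be removed along $\rho$, and an isolated fixed point can only disappear by being blown up; a point of type $(k,k)$ is resolved by a single blow-up whose exceptional curve is pointwise fixed with normal eigenvalue $\zeta^{k}$. Thus the resolution over $x$ forces two disjoint pointwise-fixed curves $C_1,C_2\subset\mathrm{Fix}(\widetilde{\sigma})$ on which $\widetilde{\sigma}$ acts in the normal direction by $\zeta^{1}$ and $\zeta^{2}$ respectively (these weights being unaffected by any further blow-ups at points lying on them).

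This is the desired contradiction: by the structural input $C_1$ and $C_2$ must both have normal eigenvalue $\zeta^{w_0}$, whence $\zeta=\zeta^{2}$, which is absurd. Therefore no isolated fixed point of type $(1,2)$ exists, proving the lemma. I expect the only delicate point to be the justification that the two children are genuinely forced to resolve into two fixed curves of distinct normal weights — that is, that they cannot be absorbed other than by blow-ups and that the resulting curves really are components of $\mathrm{Fix}(\widetilde{\sigma})$ carrying the computed weights; once this is secured, the contradiction is immediate from the uniqueness of the normal eigenvalue of $\widetilde{\sigma}$ along its fixed locus.
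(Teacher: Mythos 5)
Your proof is correct, and it takes a genuinely different route from the paper's. The paper rules out a $(1,2)$-point by descending to $\widetilde{W}$: after the forced blow-ups it considers the images $L$, $L_1$, $L_2$ under $\widetilde{\theta}$ of the proper transform $E$ of the first exceptional curve and of the two new exceptional curves, applies the Hurwitz formula to the induced classical $3$-cyclic covering $E\to L$ of rational curves to conclude that $L$ meets $\widetilde{R}$ exactly at $L\cap(L_1\cup L_2)$, and then arranges $\widetilde{\psi}$ to contract the $\widetilde{\varphi}$-vertical $(-1)$-curve $L$, so that its image becomes a double point of the branch locus --- contradicting Lemma~\ref{multlem}~(1), since $2$ lies in neither $3\mathbb{Z}$ nor $3\mathbb{Z}+1$. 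You instead stay upstairs and play the two distinct normal weights $\zeta$ and $\zeta^{2}$, forced by resolving the children of types $(1,1)$ and $(2,2)$, against the global constancy of the normal eigenvalue of $\widetilde{\sigma}$ along $\mathrm{Fix}(\widetilde{\sigma})$; that constancy is genuine, because the fiber coordinate $z$ is a local generator of the eigensheaf $\mathcal{O}_{\widetilde{W}}(-\widetilde{\mathfrak{d}})$, so $\widetilde{\sigma}^{*}z=\zeta^{a}z$ with one and the same $a$ along every component of the ramification divisor --- precisely the feature that distinguishes a classical cyclic covering from an arbitrary order-$3$ automorphism. The delicate point you flag is securable from material the paper already provides: Lemma~\ref{autolem} guarantees every center of $\rho$ is a fixed point of the induced automorphism, an isolated fixed point never chosen as a center would survive to $\widetilde{S}$ (blow-ups elsewhere do not touch its neighborhood), blowing up a $(k,k)$-point yields a pointwise fixed exceptional curve of normal weight $\zeta^{k}$, and proper transforms of pointwise fixed curves remain pointwise fixed with unchanged normal weight at a general point. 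As for what each approach buys: the paper's argument recycles its established downstairs bookkeeping (the mod-$n$ multiplicity constraint of Lemma~\ref{multlem} and the $(-1)$-curve analysis as in Lemma~\ref{exclem}), but needs the small extra step of choosing $\widetilde{\psi}$ so that it contracts $L$; yours is more intrinsic, dispensing with Hurwitz, the contraction, and the multiplicity lemma altogether, at the price of making explicit the eigensheaf structure of the covering, which the paper leaves implicit.
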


\begin{proof}
Assume that $x$ is an isolated fixed point of type $(1,2)$.
Let $\rho_1\colon \overline{S}_1\to S$ be the blow-up at $x$.
Then the automorphism $\sigma_1$ induced by $\sigma$ has two isolated 
 fixed points of respective types $(1,1)$ and $(2,2)$ on $\rho_1^{-1}(x)$.
We need to blow $\overline{S}_1$ up at two such fixed points to get $\widetilde{S}$.
We denote the proper transform of $\rho_1^{-1}(x)$ by $E$, and $E_i$ the new 
 exceptional $(-1)$-curves coming from the fixed point of type $(i,i)$ $(i=1,2)$.
Then $E$ is a $(-3)$-curve not contained in 
 $\mathrm{Fix}(\widetilde{\sigma})$, while $E_1, E_2\subset \mathrm{Fix}(\widetilde{\sigma})$.
Let $L$, $L_1$, $L_2$ be the image of $E$, $E_1$, $E_2$ under 
 $\widetilde{\theta}$, respectively.
One sees easily that $L$ is a $(-1)$-curve not contained in 
 $\widetilde{R}$, and that $L_1, L_2$ are $(-3)$-curves contained in $\widetilde{R}$.
Furthermore, $L+L_1+L_2$ is contained in a fiber of $\widetilde{\varphi}$.

On the other hand, $\widetilde{\theta}|_E\colon E\to L$ is a classical 
 $3$-cyclic covering branched over $\widetilde{R}\cap L$.
Recall that $\widetilde{R}\cap L$ contains at least two points $L\cap L_1$ and 
 $L\cap L_2$.
Since $E$ and $L$ are smooth rational curves, we apply the Hurwitz 
 formula to see that it branches over exactly two points.
Therefore, $L$ meets $\widetilde{R}$ exactly at $L\cap (L_1\cup L_2)$. 
Since $L$ is a $\widetilde{\varphi}$-vertical $(-1)$-curve, we can assume that $\widetilde{\psi}$ contracts $L$.
By what we  have just seen, after the contraction, the image of $L$ will be a double point on the 
 branch locus.
This contradicts Lemma~\ref{multlem} (1).
\end{proof}
 
Therefore, we get the following lemma:

\begin{lem}\label{rholem}
Any irreducible curve contracted by $\rho$ is a $\widetilde{f}$-vertical 
 $(-1)$-curve contained in $\mathrm{Fix}(\widetilde{\sigma})$.
\end{lem}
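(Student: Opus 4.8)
The plan is to show that any irreducible curve $C$ contracted by $\rho$ must be a $\widetilde{f}$-vertical $(-1)$-curve lying in $\mathrm{Fix}(\widetilde{\sigma})$, and the natural approach is to trace such a $C$ through the factorization $\rho=\rho_1\circ\cdots\circ\rho_k$ established in Lemma~\ref{autolem}. Since $\rho$ is a birational morphism of smooth surfaces contracting $C$, and $f$ is a fibration, $C$ is necessarily $\widetilde{f}$-vertical; the content of the lemma is the sharper structural claim. First I would invoke Lemma~\ref{autolem}, which tells us that each blow-up $\rho_i\colon\overline{S}_i\to\overline{S}_{i-1}$ is centered at a fixed point of the induced automorphism $\sigma_{i-1}$ of order $n$. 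Thus every exceptional curve appearing in the resolution $\rho$ arises by blowing up a fixed point of the relevant $\sigma_i$, and the geometry of the fixed locus is tightly constrained.

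The heart of the argument is to control the \emph{types} of the isolated fixed points being blown up. By the discussion preceding Lemma~\ref{n3lem}, combined with Lemma~\ref{kktypelem} applied to $\rho$, the standing hypothesis that $\widetilde{\sigma}$ has no isolated fixed points forces every isolated fixed point of $\sigma=\sigma_0$ to be of type $(l,l)$ when $n\ge 4$, while for $n=3$ Lemma~\ref{n3lem} restricts the types to $(1,1)$ or $(2,2)$, and for $n=2$ every isolated fixed point is of type $(1,1)$. The key observation from the blow-up analysis given in the text is that blowing up a fixed point of type $(k,k)$ produces an exceptional curve $E$ entirely contained in the fixed locus of the induced automorphism, with no further isolated fixed points introduced on $E$. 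Consequently, once we are blowing up only $(l,l)$-type points, the process is stable: each new exceptional curve lies in the fixed locus and is a $(-1)$-curve, and the induced automorphism on it is trivial, so no new isolated fixed points of mixed type can appear higher up.

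I would then run this argument inductively down the tower. Starting from $\overline{S}_k=\widetilde{S}$, where by our standing assumption every $\widetilde{f}$-vertical $(-1)$-curve is contained in $\mathrm{Fix}(\widetilde{\sigma})$, Lemma~\ref{autolem} shows each $E_i$ is $\sigma_i$-invariant and the center $x_i$ of $\rho_i$ is a fixed point of $\sigma_{i-1}$. Using Lemma~\ref{kktypelem} and Lemma~\ref{n3lem} to exclude mixed-type centers, every such center is of type $(l,l)$, hence each exceptional curve $E_i$ is fixed pointwise by $\sigma_i$; being the exceptional curve of a single blow-up of a smooth surface, it is a $(-1)$-curve. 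Since $\rho$ is $\widetilde{f}$-vertical (its image in $S$ is a point, hence lies in a single fiber of $f$, whose preimage is a fiber of $\widetilde{f}$), each $E_i$ is $\widetilde{f}$-vertical as well. Any irreducible curve contracted by $\rho$ is, up to the identification along the tower, one of these exceptional configurations, so it inherits all three properties.

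The main obstacle is the exclusion of type $(1,2)$ (or more generally mixed $(k_1,k_2)$ with $k_1\ne k_2$) centers for the low-order cases $n=2,3$, where Lemma~\ref{kktypelem} alone does not suffice because it only applies for $n\ge 4$. For $n=3$ this gap is precisely what Lemma~\ref{n3lem} closes, via the delicate analysis of the branch curve: a type-$(1,2)$ point would, after resolution and pushforward under $\widetilde{\psi}$, produce a double point on the branch locus $R$, contradicting Lemma~\ref{multlem}(1), which forces multiplicities to lie in $n\mathbb{Z}\cup(n\mathbb{Z}+1)$. For $n=2$ the exclusion is immediate since the only admissible type is $(1,1)$. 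Once these low-order cases are handled, the inductive bookkeeping along the tower is routine, so I expect essentially all the difficulty to be already encapsulated in the preceding lemmas, and the proof of Lemma~\ref{rholem} itself to amount to their clean assembly.
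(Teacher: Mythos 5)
Your assembly misses the one step that constitutes the paper's entire proof: ruling out blow-up centers that are \emph{non-isolated} fixed points. Lemma~\ref{autolem} only guarantees that the center $x_i$ of $\rho_i$ is a fixed point of $\sigma_{i-1}$; a priori it may be a smooth point of a one-dimensional component of $\mathrm{Fix}(\sigma_{i-1})$, i.e., a point of type $(0,l)$ with $\gcd(l,n)=1$. Lemmas~\ref{kktypelem} and \ref{n3lem} constrain only \emph{isolated} fixed points, so your appeal to them ``to exclude mixed-type centers'' does not touch $(0,l)$ points at all: they are not isolated fixed points, and nothing you cite forbids them as centers. This is not a peripheral case; it is the whole danger. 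If some $x_i$ were of type $(0,l)$, the exceptional curve of $\rho_i$ would carry exactly two fixed points of $\sigma_i$ (one of type $(0,l)$ where it meets the proper transform of the fixed curve, and one isolated of type $(n-l,l)$), hence would \emph{not} lie in the fixed locus, and its proper transform on $\widetilde{S}$ would be a curve contracted by $\rho$ contradicting the statement being proved. The same unexcluded possibility undermines your ``stability'' claim: every point of an exceptional curve $E_i\subset \mathrm{Fix}(\sigma_i)$ is precisely a type $(0,l)$ fixed point, so without the exclusion nothing prevents a later $\rho_j$ from being centered on $E_i$, in which case the proper transform of $E_i$ is no longer a $(-1)$-curve. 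In short, your proof assumes what must be proved, namely that all centers are isolated fixed points.

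The paper closes this gap in one move, which is the actual content of its proof of the lemma: if the center of some $\rho_i$ were a smooth point of a one-dimensional fixed locus, of type $(0,l)$, then the exceptional $(-1)$-curve of $\rho_i$ would contain an isolated fixed point of type $(n-l,l)$; since $\gcd(l,n)=1$ forces $n-l\not\equiv l \pmod{n}$ for $n\ge 3$, this contradicts Lemma~\ref{kktypelem} when $n\ge 4$ and Lemma~\ref{n3lem} when $n=3$ (note that for $n=2$ the created point has the admissible type $(1,1)$, so this contradiction is genuinely unavailable there --- a subtlety neither your sketch nor a bare citation of those lemmas addresses). Once every center is known to be an isolated fixed point, hence of type $(k,k)$, your inductive bookkeeping is correct and matches the paper: each $E_i$ is pointwise fixed, is never blown up again because all its points are non-isolated fixed points, and therefore survives as an $\widetilde{f}$-vertical $(-1)$-curve in $\mathrm{Fix}(\widetilde{\sigma})$. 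You should add the blow-up-at-$(0,l)$ argument explicitly; without it the proof does not go through.
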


\begin{proof}
We write $\rho=\rho_1\circ \cdots \circ \rho_k$ as before.
It suffices to show that the center of $\rho_i$ is an isolated fixed 
 point of $\sigma_{i-1}$ for any $i$.
Suppose that the center of $\rho_i$ is a smooth point of a 
 $1$-dimensional fixed locus of $\sigma_{i-1}$.
Let its type be $(0,l)$, $\mathrm{gcd}(l,n)=1$.
Then, on the exceptional $(-1)$-curve of $\rho_i$, we can find an 
isolated fixed point of type $(n-l,l)$.
This is impossible by Lemmas~\ref{kktypelem} and \ref{n3lem}.
\end{proof}

From this lemma, we can reconstruct $(\widetilde{S}, 
\widetilde{\sigma})$ from $(S,\sigma)$ by blowing up isolated 
fixed points of $\sigma$.

\section{Lower bound of the slope}

The purpose of this section is to show the following theorem:

\begin{thm}\label{boundthm}
Let $f\colon S\to B$ be a primitive cyclic covering fibration of type 
$(g, h, n)$.
If $h\geq 1$ and $g\geq (2n-1)(2hn+n-1)/(n+1)$, then 
$K_f^2\geq \lambda_{g,h,n}\chi_f$ holds, where
\begin{equation}\label{boundeq}
\lambda_{g,h,n}=\frac{24(n-1)(g-1)}{2(2n-1)(g-1)-n(n+1)(h-1)}.
\end{equation}
\end{thm}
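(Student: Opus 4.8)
The plan is to push every invariant of $f$ down to the relatively minimal model $\varphi\colon W\to B$ of $\widetilde\varphi$ (unique because $h\geq 1$) and to the branch data, and then to split $K_f^2-\lambda_{g,h,n}\chi_f$ into a horizontal part controlled by the geometry of $\varphi$ and a local part controlled by the singularities of the branch curve. The inequality will follow once each part is shown to be nonnegative, the genus hypothesis being exactly what guarantees this for the horizontal part.

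First I would compare $f$ with $\widetilde f=\widetilde\varphi\circ\widetilde\theta$. By Lemma \ref{rholem} the contraction $\rho\colon\widetilde S\to S$ blows down precisely the $\widetilde f$-vertical $(-1)$-curves lying in $\mathrm{Fix}(\widetilde\sigma)$; if $\rho$ is a composite of $k$ blow-ups, then $\chi_f=\chi_{\widetilde f}$ and $K_f^2=K_{\widetilde f}^2+k$, since blow-ups preserve $\chi(\mathcal O)$ and $K_B$ pulls back. Applying the smooth cyclic-cover relations \eqref{cyceq} to $\widetilde\theta$ together with Riemann--Roch for $\widetilde\theta_*\mathcal O_{\widetilde S}=\bigoplus_{j=0}^{n-1}\mathcal O_{\widetilde W}(-j\widetilde{\mathfrak d})$ yields
\begin{align*}
K_{\widetilde S}^2&=n\bigl(K_{\widetilde W}+(n-1)\widetilde{\mathfrak d}\bigr)^2,\\
\chi(\mathcal O_{\widetilde S})&=n\chi(\mathcal O_{\widetilde W})+\frac{n(n-1)(2n-1)}{12}\widetilde{\mathfrak d}^2+\frac{n(n-1)}{4}\widetilde{\mathfrak d}K_{\widetilde W}.
\end{align*}
Writing $\widetilde\psi=\psi_1\circ\cdots\circ\psi_N$ and invoking Lemma \ref{multlem}, with $d_i=[m_i/n]$, the usual blow-up identities give $\widetilde{\mathfrak d}^2=\mathfrak d^2-\sum_i d_i^2$, $\widetilde{\mathfrak d}K_{\widetilde W}=\mathfrak dK_W+\sum_i d_i$ and $K_{\widetilde W}^2=K_W^2-N$, where $\mathfrak d=\mathfrak d_0$ and $R=R_0\sim n\mathfrak d$ live on the relatively minimal $W$. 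Substituting these, replacing $K_W^2$ and $\chi(\mathcal O_W)$ by the relative invariants $K_\varphi^2$ and $\chi_\varphi$ of the genus-$h$ fibration $\varphi$, and using $\mathfrak d\cdot\Gamma=r/n$ on a general fibre $\Gamma$ with $r=2(g-1-n(h-1))/(n-1)$ from \eqref{r}, I obtain closed expressions for $K_f^2$ and $\chi_f$ in terms of $K_\varphi^2$, $\chi_\varphi$, $\mathfrak d^2$, $\mathfrak dK_W$ and the discrete data $\{m_i\}$, $N$, $k$.

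The heart of the argument is to reorganise
$$
K_f^2-\lambda_{g,h,n}\chi_f=H+\sum_i \iota(m_i),
$$
where $H$ is a horizontal term depending only on $K_\varphi^2,\chi_\varphi,\mathfrak d^2,\mathfrak dK_W$ and $r$, and each $\iota(m_i)$ collects the contribution of one blow-up. For the local terms, Lemma \ref{multlem} confines $m_i$ to $n\mathbb Z$ or $n\mathbb Z+1$ and records whether $E_i\subset R_i$; a short case analysis on $m_i\bmod n$, tracking via Lemmas \ref{exclem} and \ref{rholem} which exceptional curves are eventually contracted by $\rho$ (hence how the summand $k$ is distributed), should give $\iota(m_i)\geq0$ for every $i$. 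For $H$, I would use that $\varphi$ is relatively minimal of genus $h\geq1$, so $K_\varphi$ is nef and $\chi_\varphi\geq0$ by Arakelov and Ueno, while $\mathfrak dK_\varphi=\frac1n RK_\varphi\geq0$ since $R$ is effective. A direct check shows the coefficient of $\mathfrak d^2$ in $H$ is nonpositive (and vanishes exactly when $h=1$), so for $h\geq2$ I would bound $\mathfrak d^2$ from above by the relative Hodge index inequality applied to $\mathfrak d-\frac{r}{n(2h-2)}K_\varphi$, which is orthogonal to $\Gamma$; this converts $H$ into a combination of $\chi_\varphi$, $K_\varphi^2$ and the fixed quantity $r$ with the first two entering nonnegatively. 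The elliptic case $h=1$ is simpler precisely because the $\mathfrak d^2$ term drops out.

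The main obstacle is this last step: pinning down the exact coefficient of $\chi_\varphi$ that survives in $H$ after the substitutions, and verifying that it is nonnegative precisely when $g\geq(2n-1)(2hn+n-1)/(n+1)$, is the delicate computational core; this same threshold is what keeps the denominator $2(2n-1)(g-1)-n(n+1)(h-1)$ of $\lambda_{g,h,n}$ positive. A secondary difficulty is the uniform bookkeeping of the local contributions: the congruence $m_i\equiv0,1\pmod n$ from Lemma \ref{multlem} is what makes $d_i=[m_i/n]$ interact cleanly with the cyclic-cover formulas and keeps each $\iota(m_i)$ nonnegative, and the additive $k$ from $K_f^2=K_{\widetilde f}^2+k$ must be absorbed into the $\iota(m_i)$ without spoiling their signs. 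Sharpness of the bound is not part of this proof; it is exhibited separately by Example \ref{sharpex}.
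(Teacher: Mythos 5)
Your overall architecture coincides with the paper's: invariants are pushed down through the canonical resolution to the relatively minimal model $W$, the quantity $K_f^2-\lambda_{g,h,n}\chi_f$ is split into a horizontal term plus per-blow-up local terms, and your Hodge-index step is literally equivalent to the paper's, since the non-negativity of the determinant of the intersection matrix of $\{K_\varphi,\mathfrak{d},\Gamma\}$ in \eqref{det} is exactly the statement $\bigl(\mathfrak{d}-\tfrac{r}{2n(h-1)}K_\varphi\bigr)^2\le 0$ for the $\Gamma$-orthogonal combination you write down. The local bookkeeping also works as you hope: by \eqref{comp3} each blow-up contributes $\frac{n^2(n-1)^2(n+1)(h-1)}{D}d_i^2+\frac{2n(n-1)(n+1)(g-1-n(h-1))}{D}d_i-n$, where $d_i=[m_i/n]\ge 1$ and $D=2(2n-1)(g-1)-n(n+1)(h-1)$; this is monotone increasing in $d_i$ when $h\ge 1$ and equals $\frac{n^2(n-2)}{D}\bigl((n+1)(n-2)(h-1)+2(g-1)\bigr)\ge 0$ at $d_i=1$, so your claim $\iota(m_i)\ge 0$ holds, and the correction $k=\varepsilon$ need not be ``absorbed'' anywhere --- it is simply non-negative and discarded.

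The genuine gap is in the horizontal part. In the expansion \eqref{comp}, $\chi_\varphi$ enters $K_f^2-\lambda\chi_f$ with coefficient $-n\lambda<0$, and nothing in your toolkit can repair this sign: the Hodge-index substitution only trades $\mathfrak{d}^2$ and $K_\varphi\mathfrak{d}$ for $K_\varphi^2$, while the facts you invoke ($K_\varphi$ nef, $\chi_\varphi\ge 0$, $K_\varphi\mathfrak{d}\ge 0$) make the offending term \emph{more} negative, not less; so your assertion that after substitution $\chi_\varphi$ and $K_\varphi^2$ ``enter nonnegatively'' fails. For $h\ge 2$ the missing ingredient is Xiao's slope inequality applied to the base fibration, $K_\varphi^2\ge\frac{4(h-1)}{h}\chi_\varphi$, which the paper uses in \eqref{slopeineq} to convert $K_\varphi^2-\lambda\chi_\varphi$ into a multiple of $K_\varphi^2$, whereupon the genus hypothesis $g\ge(2n-1)(2hn+n-1)/(n+1)$ makes the surviving coefficient of $K_\varphi^2$ in \eqref{rhsesti} non-negative. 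For $h=1$ your observation that the $\mathfrak{d}^2$ coefficient vanishes is correct but insufficient for the same reason: one is left with $\frac{n(n-1)}{2n-1}\bigl((n+1)K_\varphi\mathfrak{d}-12\chi_\varphi\bigr)$ as in \eqref{h=1ineq1}, and $K_\varphi\mathfrak{d}\ge 0$ alone does not dominate $-12\chi_\varphi$; the paper needs the canonical bundle formula for elliptic surfaces to get $K_\varphi\mathfrak{d}\ge\frac{2(g-1)}{n(n-1)}\chi(\mathcal{O}_W)$ before the genus bound closes the case. Without these two inputs, which your proposal never mentions, the horizontal term cannot be shown non-negative, and the ``delicate computational core'' you flag is not merely delicate but unprovable from nefness and $\chi_\varphi\ge 0$ alone.
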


\begin{rem}\normalfont
$(1)$ By (\ref{r}), the condition $g\geq (2n-1)(2hn+n-1)/(n+1)$ is equivalent to 
$r\geq 3g/(2n-1)+3$, and 
$$
\lambda_{g,h,n}=\frac{8}{1+r(n+1)/6(g-1)}.
$$
$(2)$ The case where $h=0$ needs a separate treatment. As we shall see
 later in Theorem~\ref{slopeeq}, the inequality
$K_f^2\geq \lambda_{g,h,n}\chi_f$ also holds for $h=0$.
\end{rem}

We keep the notation in the previous section and, for the time being, we 
do not exclude the case where $h=0$ from the consideration.
We obtain a classical $n$-cyclic covering $\theta_i\colon S_i\to W_i$ branched 
over $R_i$ by setting
$$
S_i=\mathrm{Spec}\left(\bigoplus_{j=0}^{n-1}\mathcal{O}_{W_i}(-j\mathfrak{d}_i)\right).
$$
Since $R_i$ is reduced, $S_i$ is a normal surface.

There exists a natural birational morphism $S_i\rightarrow S_{i-1}$. Set $S^\prime=S_0$, $\theta=\theta_0$, $\mathfrak{d}=\mathfrak{d}_0$ and $f^{\prime}=\varphi\circ\theta$.
$$\xymatrix{
\widetilde{S}=S_N \ar[d]^{\widetilde{\theta}} \ar[r] \ar@(ur,ul)[rrrr]^{\rho} & S_{N-1} \ar[d]^{\theta_{N-1}} \ar[r] & \cdots \ar[r] & S_0=S^{\prime} \ar[d]^{\theta} & S \ar[ddl]^f\\
\widetilde{W}=W_N \ar[r]^{\psi_N} \ar[drrr]^{\widetilde{\varphi}}                & W_{N-1} \ar[r]^{\psi_{N-1}}            & \cdots \ar[r]^{\psi_1} & W_0=W \ar[d]^{\varphi} \\
                                                                                                    &                                                &                              & B
}$$
From Lemma \ref{multlem}, we have
\begin{eqnarray}
K_{\widetilde{\varphi}}&=
&\widetilde{\psi}^{\ast}K_{\varphi}+\sum_{i=1}^N {\bf E}_i \label{kphi},\\
\widetilde{\mathfrak{d}}&=
&\widetilde{\psi}^{\ast}\mathfrak{d}-\sum_{i=1}^N \left[\frac{m_i}{n}\right]{\bf E}_i, \label{delta}
\end{eqnarray}
where ${\bf E}_i$ denotes the total transform of $E_i$.
Since
$$
K_{\widetilde{S}}=
\displaystyle{\widetilde{\theta}^{\ast}\left(K_{\widetilde{W}}+(n-1)\widetilde{\mathfrak{d}}\right)}
$$
and
$$
\chi(\mathcal{O}_{\widetilde{S}})=
n\chi(\mathcal{O}_{\widetilde{W}})+\frac{1}{2}\sum_{j=1}^{n-1}j\widetilde{\mathfrak{d}}(j\widetilde{\mathfrak{d}}+K_{\widetilde{W}}),
$$
we get
\begin{eqnarray}
K_{\widetilde{f}}^2&=
&n(K_{\widetilde{\varphi}}^2+2(n-1)K_{\widetilde{\varphi}}\widetilde{\mathfrak{d}}+(n-1)^2\widetilde{\mathfrak{d}}^2), \label{kftilde} \\
\chi_{\widetilde{f}}&=
&n\chi_{\widetilde{\varphi}}+\frac{1}{2}\sum_{j=1}^{n-1}j\widetilde{\mathfrak{d}}(j\widetilde{\mathfrak{d}}+K_{\widetilde{\varphi}}). \label{chiftilde}
\end{eqnarray}
Similarly, we have
\begin{eqnarray}
\omega_{f^{\prime}}^2&=
&n(K_{\varphi}^2+2(n-1)K_{\varphi}\mathfrak{d}+(n-1)^2\mathfrak{d}^2) \label{omegafprime} \\
\chi_{f^{\prime}}&=
&n\chi_{\varphi}+\frac{1}{2}\sum_{j=1}^{n-1}j\mathfrak{d}(j\mathfrak{d}+K_{\varphi}), \label{chifprime}
\end{eqnarray}
where $\omega_{f^{\prime}}$ denotes the relative dualizing sheaf of $f^{\prime}$.
Combining these equalities $(\ref{kphi})$,\dots,$(\ref{chifprime})$, we obtain
\begin{eqnarray}
\omega_{f^{\prime}}^2-K_{\widetilde{f}}^2&=
&n\sum_{i=1}^N \left((n-1)\left[\frac{m_i}{n}\right]-1\right)^2, \label{fprime-ftilde1}\\
\chi_{f^{\prime}}-\chi_{\widetilde{f}}&=
&\frac{1}{12}n(n-1)\sum_{i=1}^N \left[\frac{m_i}{n}\right]\left((2n-1)\left[\frac{m_i}{n}\right]-3\right) \label{fprime-ftilde2}.
\end{eqnarray}

\begin{lem} \label{fprimelem}
Assume that 
$$
g\ge \frac{2n-1}{n+1}(2hn+n-1)
$$
when $h>0$.
Then $\omega_{f^{\prime}}^2\ge \lambda_{g,h,n}\chi_{f^{\prime}}$,
where $\lambda_{g,h,n}$ is the rational number in $(\ref{boundeq})$.
\end{lem}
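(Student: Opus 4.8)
The plan is to translate the assertion into an inequality among the four intersection numbers $K_\varphi^2$, $K_\varphi\mathfrak d$, $\mathfrak d^2$ and $\chi_\varphi$ of the relatively minimal model $\varphi\colon W\to B$, and then to verify it by combining three standard positivity inputs. First I would simplify (\ref{omegafprime}) and (\ref{chifprime}) using $\sum_{j=1}^{n-1}j=n(n-1)/2$ and $\sum_{j=1}^{n-1}j^2=n(n-1)(2n-1)/6$, so that $\omega_{f'}^2=n(K_\varphi+(n-1)\mathfrak d)^2$ and $\chi_{f'}=n\chi_\varphi+\frac{n(n-1)(2n-1)}{12}\mathfrak d^2+\frac{n(n-1)}{4}K_\varphi\mathfrak d$. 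Dividing the desired inequality $\omega_{f'}^2\ge\lambda_{g,h,n}\chi_{f'}$ by $n$ turns it into
\[
K_\varphi^2+\gamma_C\,K_\varphi\mathfrak d+\gamma_D\,\mathfrak d^2-\lambda_{g,h,n}\chi_\varphi\ge 0,
\]
where $\gamma_C=(n-1)(2-\lambda_{g,h,n}/4)$ and $\gamma_D=(n-1)\big((n-1)-\lambda_{g,h,n}(2n-1)/12\big)$.

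The principal case is $h\ge 2$, where three facts are available. \emph{(i)} $\chi_\varphi\ge 0$ (Ueno). \emph{(ii)} The slope inequality of Xiao \cite{xiao2} for the relatively minimal genus-$h$ fibration $\varphi$: $K_\varphi^2\ge \frac{4(h-1)}{h}\chi_\varphi$. \emph{(iii)} Since $K_\varphi$ is nef with $K_\varphi\Gamma=2h-2>0$, the $\mathbb Q$-divisor $\mathfrak d-tK_\varphi$ with $t:=\frac{\mathfrak d\Gamma}{K_\varphi\Gamma}=\frac{r}{2n(h-1)}$ is numerically orthogonal to the general fibre $\Gamma$, so the Hodge index theorem gives $(\mathfrak d-tK_\varphi)^2\le 0$, i.e.\ $\mathfrak d^2\le 2t\,K_\varphi\mathfrak d-t^2K_\varphi^2$. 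I would also record that $K_\varphi\mathfrak d\ge 0$, because $K_\varphi$ is nef and $\mathfrak d$ is numerically $\frac1n R$ with $R$ effective.

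Rewriting (\ref{r}) as $(n-1)r=2(g-1)-2n(h-1)$, one checks that $\gamma_D<0$ precisely when $h\ge 2$; thus the $\mathfrak d^2$-term has the ``wrong'' sign and must be removed through \emph{(iii)}. The decisive—and pleasant—point is that $\lambda_{g,h,n}$ is tuned so that $\gamma_C+2t\gamma_D=0$ identically; indeed solving $\gamma_C+2t\gamma_D=0$ for $\lambda$ returns exactly the value (\ref{boundeq}). Consequently the cross term cancels after substituting \emph{(iii)}, and the left-hand side becomes
\[
(1-\gamma_D t^2)K_\varphi^2+\gamma_D(\mathfrak d-tK_\varphi)^2-\lambda_{g,h,n}\chi_\varphi.
\]
Here $\gamma_D<0$ and $(\mathfrak d-tK_\varphi)^2\le 0$, so the middle term is nonnegative; applying \emph{(ii)} to the first term (note $1-\gamma_D t^2>0$) reduces everything to the single scalar inequality
\[
\frac{4(h-1)}{h}\,(1-\gamma_D t^2)\ge\lambda_{g,h,n}.
\]

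I expect this last inequality to be the only genuine computation, and the main obstacle is to show it is equivalent to the hypothesis $g\ge\frac{2n-1}{n+1}(2hn+n-1)$. After clearing denominators it is a polynomial inequality in $n,h,r$ (equivalently in $g$); a direct check—for instance, for $(n,h)=(2,2)$ it reads $(g-1)(g-9)\ge 0$—shows that its solution set is exactly the locus $g\ge\frac{2n-1}{n+1}(2hn+n-1)$, which by the Remark is the stated bound. For the boundary cases: when $h=1$ one has $K_\varphi^2=0$, $\gamma_D=0$ and $\lambda_{g,1,n}=\frac{12(n-1)}{2n-1}$, and writing $K_\varphi\equiv d_0\Gamma$ with $d_0\ge\chi_\varphi$ (canonical bundle formula for elliptic fibrations) the inequality collapses to $\gamma_C\frac rn\,\chi_\varphi\ge\lambda_{g,1,n}\chi_\varphi$, again equivalent to the hypothesis; when $h=0$ both $K_\varphi^2$ and $\chi_\varphi$ vanish, and the remaining inequality $\gamma_C K_\varphi\mathfrak d+\gamma_D\mathfrak d^2\ge 0$ follows from the explicit $\mathbb P^1$-bundle structure of $W$, the situation analysed in detail (via elementary transformations) in the treatment of type $(g,0,n)$.
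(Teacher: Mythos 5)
Your proposal is correct and takes essentially the same route as the paper's proof: the identical case split ($h\ge 2$ as the main case, $h=1$ via the canonical bundle formula for elliptic surfaces, $h=0$ via the $\mathbb{P}^1$-bundle structure, where in fact equality holds), with your Hodge-index input $(\mathfrak{d}-tK_\varphi)^2\le 0$ being literally the paper's determinant inequality \eqref{det} divided by $(K_\varphi\Gamma)^2$, and your cancellation $\gamma_C+2t\gamma_D=0$ being exactly the paper's identities \eqref{comp1} and \eqref{comp2}. The final scalar inequality you isolate is, after clearing denominators, the same polynomial condition as the paper's \eqref{rhsesti}, whose numerator $(g-1)((n+1)g-(2hn+n-1)(2n-1))$ is nonnegative under the stated hypothesis (the only cosmetic difference being that you apply the slope inequality of $\varphi$ to replace $K_\varphi^2$ by $\frac{4(h-1)}{h}\chi_\varphi$ and finish with $\chi_\varphi\ge 0$, while the paper bounds $\chi_\varphi$ by $\frac{h}{4(h-1)}K_\varphi^2$ and finishes with $K_\varphi^2\ge 0$).
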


\begin{proof}
Suppose that $h=0$. Then $R$ is numerically equivalent to $(-r/2)K_{\varphi}+M\Gamma$ for some $M\in \frac{1}{2}\mathbb{Z}$
since $\varphi\colon W\rightarrow B$ is a $\mathbb{P}^1$-bundle and $K_W\Gamma=-2$, $R\Gamma=\widetilde{R}\widetilde{\Gamma}=r$. Moreover, $K_{\varphi}^2=0$ and $\chi_{\varphi}=0$. Hence we have
\begin{eqnarray*}
\omega_{f^{\prime}}^2&=&\frac{4(g-1)(n-1)}{n}M,\\
\chi_{f^{\prime}}&=&\frac{2(2n-1)(g-1)+n(n+1)}{6n}M
\end{eqnarray*}
from \eqref{r},\eqref{omegafprime} and \eqref{chifprime}. Thus, we get $\omega_{f^{\prime}}^2=\lambda_{g,0,n}\chi_{f^{\prime}}$. 

If $h=1$, then $K_{\varphi}^2=0$ and $\chi_{\varphi}=\chi(\mathcal{O}_W)$ since $\varphi$ is a relatively minimal elliptic surface. Then we have
\begin{eqnarray}
\omega_{f^{\prime}}^2-\lambda_{g,1,n}\chi_{f^{\prime}}&=&n\left(K_{\varphi}^2-\frac{12(n-1)}{2n-1}\chi_{\varphi}\right)+\frac{n(n-1)(n+1)}{2n-1}K_{\varphi}\mathfrak{d} \nonumber \\
&=&\frac{n(n-1)}{2n-1}((n+1)K_{\varphi}\mathfrak{d}-12\chi_{\varphi}). \label{h=1ineq1}
\end{eqnarray}
By the canonical bundle formula, $K_{\varphi}$ is numerically equivalent to $\chi(\mathcal{O}_W)\Gamma+\sum_{i=1}^l (1-1/k_i)\Gamma$, where 
$\{k_i\mid i=1,\ldots,l\}$ denotes the set of multiplicities of all multiple 
fibers of $\varphi$, $k_i\geq 2$. Hence, we have
$$K_{\varphi}\mathfrak{d}\ge \chi(\mathcal{O}_W)\Gamma\mathfrak{d}=\frac{2(g-1)}{n(n-1)}\chi(\mathcal{O}_W).$$
Thus, 
\begin{eqnarray}
&&\frac{n(n-1)}{2n-1}((n+1)K_{\varphi}\mathfrak{d}-12\chi_{\varphi}) \nonumber \\
&\ge& \frac{n(n-1)}{2n-1}\left(\frac{2(n+1)(g-1)}{n(n-1)}-12\right)\chi(\mathcal{O}_W) \nonumber \\
&=&\frac{2}{2n-1}((n+1)g-(2n-1)(3n-1))\chi(\mathcal{O}_W). \label{h=1ineq2}
\end{eqnarray}
From \eqref{h=1ineq1}, \eqref{h=1ineq2} and hypothesis $g\ge (2n-1)(3n-1)/(n+1)$, we have $\omega_{f^{\prime}}^2\ge \lambda_{g,1,n}\chi_{f^{\prime}}$.

Let $h\ge 2$. We compute $\omega_{f^{\prime}}^2-\lambda_{g,h,n}\chi_{f^{\prime}}$ by \eqref{omegafprime}, \eqref{chifprime} as follows:
\begin{eqnarray}
\omega_{f^{\prime}}^2-\lambda_{g,h,n}\chi_{f^{\prime}}&=&n(K_{\varphi}^2+2(n-1)K_{\varphi}\mathfrak{d}+(n-1)^2\mathfrak{d}^2) \nonumber \\
&&-\lambda_{g,h,n}\left(n\chi_{\varphi}+\frac{1}{4}n(n-1)K_{\varphi}\mathfrak{d}+\frac{1}{12}n(n-1)(2n-1)\mathfrak{d}^2\right) \nonumber \\
&=&n(K_{\varphi}^2-\lambda_{g,h,n}\chi_{\varphi})+\frac{n(n-1)}{4}(8-\lambda_{g,h,n})K_{\varphi}\mathfrak{d} \nonumber \\
&&+\frac{n(n-1)}{12}(12(n-1)-(2n-1)\lambda_{g,h,n})\mathfrak{d}^2. \label{comp}
\end{eqnarray}
Since the slope inequality of $\varphi$ gives us
$$K_{\varphi}^2\ge \frac{4(h-1)}{h}\chi_{\varphi},$$
we have
\begin{eqnarray}
K_{\varphi}^2-\lambda_{g,h,n}\chi_{\varphi}&=&\frac{h\lambda_{g,h,n}}{4(h-1)}\left(K_{\varphi}^2-\frac{4(h-1)}{h}\chi_{\varphi}\right)+\left(1-\frac{h\lambda_{g,h,n}}{4(h-1)}\right)K_{\varphi}^2 \nonumber \\
&\ge &\left(1-\frac{h\lambda_{g,h,n}}{4(h-1)}\right)K_{\varphi}^2. \label{slopeineq}
\end{eqnarray}
We consider the intersection matrix of $\{K_{\varphi},\mathfrak{d},\Gamma\}$
\begin{equation} \label{intmat}
\left(
\begin{array}{ccc}
  K_{\varphi}^2 & K_{\varphi}\mathfrak{d} & K_{\varphi}\Gamma \\
  K_{\varphi}\mathfrak{d} & \mathfrak{d}^2 & \mathfrak{d}\Gamma \\
  K_{\varphi}\Gamma & \mathfrak{d}\Gamma & 0
\end{array}
\right).
\end{equation}
By Arakerov's theorem, we have $K_{\varphi}^2\ge 0$ and then the matrix is not negative definite. Hence the determinant of \eqref{intmat} is non-negative, that is, we have
\begin{equation} \label{det}
2(K_{\varphi}\mathfrak{d})(\mathfrak{d}\Gamma)( K_{\varphi}\Gamma)-\mathfrak{d}^2(K_{\varphi}\Gamma)^2-(\mathfrak{d}\Gamma)^2K_{\varphi}^2\ge 0
\end{equation}
by the Hodge index theorem. Since
$$\mathfrak{d}\Gamma=\frac{r}{n}=\frac{2(g-1-n(h-1))}{n(n-1)}$$
and
$$K_{\varphi}\Gamma=2(h-1),$$
the inequality \eqref{det} is equivalent to
\begin{equation} \label{detequi}
2(g-1-n(h-1)) K_{\varphi}\mathfrak{d}-n(n-1)(h-1)\mathfrak{d}^2\ge \frac{1}{n(n-1)(h-1)}(g-1-n(h-1))^2K_{\varphi}^2.
\end{equation}
On the other hand, by the definition of $\lambda_{g,h,n}$, we have
\begin{eqnarray}
&&\frac{n(n-1)}{4}(8-\lambda_{g,h,n}) \nonumber \\
&=&\frac{n(n-1)(n+1)}{2(2n-1)(g-1)-n(n+1)(h-1)}2(g-1-n(h-1)) \label{comp1}
\end{eqnarray}
and
\begin{eqnarray}
&&\frac{n(n-1)}{12}(12(n-1)-(2n-1)\lambda_{g,h,n}) \nonumber \\
&=&\frac{n(n-1)(n+1)}{2(2n-1)(g-1)-n(n+1)(h-1)}(-n(n-1)(h-1)). \label{comp2}
\end{eqnarray}
Hence, combining \eqref{comp}, \eqref{slopeineq}, \eqref{detequi}, \eqref{comp1} and \eqref{comp2}, we get
\begin{eqnarray}
\omega_{f^{\prime}}^2-\lambda_{g,h,n}\chi_{f^{\prime}}&\ge &n\left(1-\frac{h\lambda_{g,h,n}}{4(h-1)}\right)K_{\varphi}^2 \nonumber \\
&&+\frac{(n+1)(g-1-n(h-1))^2}{(h-1)(2(2n-1)(g-1)-n(n+1)(h-1))}K_{\varphi}^2. \label{esti}
\end{eqnarray}
Since we have
\begin{equation*}
1-\frac{h\lambda_{g,h,n}}{4(h-1)}=\frac{2(g-1)(-hn+2h-2n+1)-n(n+1)(h-1)^2}{(h-1)(2(2n-1)(g-1)-n(n+1)(h-1))},
\end{equation*}
the right hand side of the inequality \eqref{esti} is
\begin{equation} \label{rhsesti}
\frac{(g-1)((n+1)g-(2hn+n-1)(2n-1))}{(h-1)(2(2n-1)(g-1)-n(n+1)(h-1))}K_{\varphi}^2.
\end{equation}
By the assumption $g\ge (2n-1)(2hn+n-1)/(n+1)$, \eqref{rhsesti} is non-negative.
\end{proof}

\begin{prfofthm} \normalfont
Let $\varepsilon$ be the number of blow-ups of $\rho\colon \widetilde{S}\rightarrow S$. Then we have
\begin{equation}
K_{\widetilde{f}}^2=K_f^2-\varepsilon.
\end{equation}
Using \eqref{fprime-ftilde1}, \eqref{fprime-ftilde2}, \eqref{comp1} and \eqref{comp2}, we can calculate $K_f^2-\lambda_{g,h,n}\chi_f$ as follows:
\begin{eqnarray}
K_f^2-\lambda_{g,h,n}\chi_f&=&K_{\widetilde{f}}^2-\lambda_{g,h,n}\chi_{\widetilde{f}}+\varepsilon \nonumber \\
&=&\omega_{f^{\prime}}^2-n\sum_{i=1}^N \left((n-1)\left[\frac{m_i}{n}\right]-1\right)^2-\lambda_{g,h,n}\chi_{f^{\prime}} \nonumber \\
&&+\lambda_{g,h,n}\frac{1}{12}n(n-1)\sum_{i=1}^N \left[\frac{m_i}{n}\right]\left((2n-1)\left[\frac{m_i}{n}\right]-3\right)+\varepsilon \nonumber \\
&=&\omega_{f^{\prime}}^2-\lambda_{g,h,n}\chi_{f^{\prime}}+\frac{1}{12}n(n-1)((2n-1)\lambda_{g,h,n}-12(n-1))\sum_{i=1}^N \left[\frac{m_i}{n}\right]^2 \nonumber \\
&&+\frac{1}{4}n(n-1)(8-\lambda_{g,h,n})\sum_{i=1}^N \left[\frac{m_i}{n}\right]-nN+\varepsilon \nonumber \\
&=&\omega_{f^{\prime}}^2-\lambda_{g,h,n}\chi_{f^{\prime}}+\frac{n^2(n-1)^2(n+1)(h-1)}{2(2n-1)(g-1)-n(n+1)(h-1)}\sum_{i=1}^N \left[\frac{m_i}{n}\right]^2 \nonumber \\
&&+\frac{2n(n-1)(n+1)(g-1-n(h-1))}{2(2n-1)(g-1)-n(n+1)(h-1)}\sum_{i=1}^N \left[\frac{m_i}{n}\right]-nN+\varepsilon. \label{comp3}
\end{eqnarray}
When $h\ge 1$, the right hand side of \eqref{comp3} increases monotonically with respect to the multiplicity $m_i$. So we may assume that $[m_i/n]=1$ for all $i=1,\ldots,N$. Then,
\begin{eqnarray*}
&&\frac{n^2(n-1)^2(n+1)(h-1)}{2(2n-1)(g-1)-n(n+1)(h-1)}\sum_{i=1}^N \left[\frac{m_i}{n}\right]^2\\
&&+\frac{2n(n-1)(n+1)(g-1-n(h-1))}{2(2n-1)(g-1)-n(n+1)(h-1)}\sum_{i=1}^N \left[\frac{m_i}{n}\right]-nN \\
&=&\frac{n^2(n-2)N}{2(2n-1)(g-1)-n(n+1)(h-1)}((n+1)(n-2)(h-1)+2(g-1))  \\
&\ge& 0. 
\end{eqnarray*}
Combining them with Lemma \ref{fprimelem}, we conclude the proof.
\qed \end{prfofthm}

\begin{rem}
From the proofs of Theorem~\ref{boundthm} and Lemma~\ref{fprimelem}, $K_f^2=\lambda_{g,h,n}\chi_f$ holds if and only if
the following $3$ conditions are satisfied:

\smallskip

\noindent
(1) Either $R$ is non-singular, i.e., $\widetilde{R}=R$ or $n=2$ and $R$ has only negligible singularities.

\smallskip

\noindent
(2) The equality sign of the slope inequality of $\varphi$ holds: $K_{\varphi}^2=\displaystyle{\frac{4(h-1)}{h}}\chi_{\varphi}$.

\smallskip

\noindent
(3) The intersection matrix of $\{K_{\varphi},\mathfrak{d},\Gamma\}$ is singular, i.e., 
$K_{\varphi},\mathfrak{d},\Gamma$ are linearly dependent in $\mathrm{NS}(W)_{\mathbb{R}}$.

\smallskip

\noindent
This also holds for $h=0$ as we shall see in Theorem~\ref{slopeeq}.
The above condition (2) means that $\varphi$ is locally trivial or hyperelliptic with the Horikawa index $0$ (cf.\ \cite{kon}, \cite{xiao2}).
We will give an example of primitive cyclic covering fibrations of type $(g,h,n)$ with the slope $\lambda_{g,h,n}$ and $\varphi$ being locally trivial in Example~\ref{sharpex}.
\end{rem}

\begin{exa} \label{sharpex} \normalfont
Let $B$ and $\Gamma$ be smooth curves of genus $b$ and $h$, respectively. Let $\mathfrak{d}_1$ and $\mathfrak{d}_2$ be divisors on $B$ and $\Gamma$ of degree $N$ and $M$, respectively. For $N$ and $M$ sufficiently large, the divisor $\mathfrak{d}:=p_1^\ast\mathfrak{d}_1+p_2^\ast\mathfrak{d}_2$ on $B\times\Gamma$ gives us a base point free linear system, where $p_1$ and $p_2$ are the natural projections from $B\times\Gamma$ to $B$ and to $\Gamma$, respectively. Thus, we can take a smooth divisor $R\in \left|n\mathfrak{d} \right|$ for any $n>0$ by Bertini's theorem. Hence we may construct a classical $n$-cyclic covering $\theta\colon S\rightarrow B\times\Gamma$ branched over $R$. Let $f\colon S\rightarrow B$ be the composite of $p_1$ and $\theta$. We will compute $K_f^2$ and $\chi_f$. Let $F$ be a general fiber of $f$ and $g$ the genus of $F$. Applying the Hurwitz formula to $\theta|_F\colon F\rightarrow \Gamma=\{t\}\times\Gamma$, we have
\begin{eqnarray*}
2g-2&=&n(2h-2)+(n-1)n(p_1^\ast\mathfrak{d}_1+p_2^\ast\mathfrak{d}_2)p_1^\ast t\\
&=&n(2h-2)+(n-1)nM.
\end{eqnarray*}
Hence
\begin{equation*}
M=\frac{2(g-1-n(h-1))}{n(n-1)}.
\end{equation*}
Since $K_{p_1}=p_2^\ast K_{\Gamma}$ and $\chi_{p_1}=0$, we obtain
\begin{eqnarray*}
K_f^2&=&(\theta^\ast(K_{p_1}+(n-1)\mathfrak{d}))^2  \\
&=&n(p_1^\ast(n-1)\mathfrak{d}_1+p_2^\ast((n-1)\mathfrak{d}_2+K_{\Gamma}))^2  \\
&=&2n(n-1)N((n-1)M+2(h-1)),
\end{eqnarray*}
and
\begin{eqnarray*}
\chi_f&=&n\chi_{p_1}+\frac{1}{2}\sum_{j=1}^{n-1}j\mathfrak{d}(j\mathfrak{d}+K_{p_1})  \\
&=&\frac{1}{4}n(n-1)\mathfrak{d} K_{p_1}+\frac{1}{12}n(n-1)(2n-1)\mathfrak{d}^2  \\
&=&\frac{1}{4}n(n-1)N(2h-2)+\frac{1}{12}n(n-1)(2n-1)2NM  \\
&=&\frac{1}{6}n(n-1)N(3(h-1)+(2n-1)M).
\end{eqnarray*}
Thus we get
\begin{eqnarray*}
\frac{K_f^2}{\chi_f}&=&\frac{12(2(h-1)+(n-1)M)}{3(h-1)+(2n-1)M}\\
&=&\frac{24(n-1)(g-1)}{2(2n-1)(g-1)-n(n+1)(h-1)}\\
&=&\lambda_{g,h,n}.
\end{eqnarray*}
This example implies that the bound of Theorem \ref{boundthm} on the slope is sharp when $h$ and $n$ are fixed arbitrarily.
\end{exa}

\section{Primitive cyclic covering fibrations of a ruled surface}

From now on, we concentrate on the case $h=0$ and consider primitive cyclic covering fibrations of type $(g,0,n)$.

In this section, we study singular points on the branch locus $R$ to know how $\widetilde{f}$-vertical $(-1)$-curves in the ramification divisor appear. As we have already seen, this amounts to observing how $\widetilde{\varphi}$-vertical 
$(-n)$-curves in $\widetilde{R}$ appear.

\bigskip

\begin{lem} \label{eltrlem}
There exists a relatively minimal model $\varphi\colon W\rightarrow B$ of $\widetilde{\varphi}$ such that
if $n=2$ and $g$ is even, then
$$
{\rm mult}_x(R)\le \frac{r}{2}=g+1
$$
for all $x\in R$, and otherwise,
$$
{\rm mult}_x(R_h)\le \frac{r}{2}=\frac{g}{n-1}+1
$$
for all $x\in R_h$, where $R_h$ denotes the horizontal part of $R$, that is, the sum of all $\varphi$-horizontal components of $R$.
\end{lem}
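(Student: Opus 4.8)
The plan is to exploit the freedom we have in choosing a relatively minimal model $\varphi\colon W\to B$ when $h=0$. Since every relatively minimal model of a ruled surface over $B$ is a $\mathbb{P}^1$-bundle, and any two such models are connected by a chain of elementary transformations (blow up a point, then blow down the proper transform of the fiber through it), I would regard the quantity $\max_x \mathrm{mult}_x(R_h)$ as a function on the set of all admissible models, and argue that an elementary transformation can always be used to strictly decrease it until the asserted bound $r/2$ is reached. The key point is that $R$ is $\varphi$-horizontal of relative degree $R\Gamma = r$, so on any single fiber $\Gamma$ the total multiplicity of $R_h$ along $\Gamma$ is at most $r$; a point of multiplicity exceeding $r/2$ is therefore rather special.

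\smallskip

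\noindent
\textbf{The descent step.}
First I would make precise how an elementary transformation at a point $x\in R_h$ with $m:=\mathrm{mult}_x(R_h)$ changes the multiplicities. Let $\Gamma_x$ be the fiber of $\varphi$ through $x$. Blowing up $x$ produces an exceptional curve $E$ meeting the proper transform of $\Gamma_x$; contracting that proper transform $\Gamma_x'$ gives a new $\mathbb{P}^1$-bundle $\varphi'\colon W'\to B$ with a new horizontal branch divisor $R_h'$. The point $x'$ onto which $\Gamma_x'$ contracts then satisfies
\begin{equation*}
\mathrm{mult}_{x'}(R_h') = R_h\Gamma_x - m = r - m,
\end{equation*}
because the local intersection of $R_h$ with $\Gamma_x$ at $x$ is $m$ (the branch locus meets a general fiber transversally in $r$ points, and the part of that intersection concentrated at $x$ is exactly $\mathrm{mult}_x(R_h)$ provided $\Gamma_x$ is not a component of $R_h$, which it is not since $R_h$ is horizontal). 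Thus if $m > r/2$, after the elementary transformation the multiplicity at the newly created point drops to $r - m < m$. One must also check that no \emph{other} point acquires a multiplicity larger than $\max(m, r-m)$: away from $\Gamma_x$ the transformation is an isomorphism, and along $\Gamma_x$ the multiplicities at points other than $x$ are bounded by $r - m$ as well, since they too contribute to the intersection number $R_h\Gamma_x = r$.

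\smallskip

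\noindent
\textbf{Termination and the $n=2$ refinement.}
To conclude I would run this as a descent argument: define $\mu(\varphi) := \max_{x} \mathrm{mult}_x(R_h)$ over the finitely many singular points of $R_h$ (smooth points contribute at most $1 \le r/2$), and observe that each elementary transformation applied at a point realizing $\mu > r/2$ strictly decreases the integer-valued invariant $\mu$ by replacing a value $m$ with $r-m < m$. Since $\mu$ is a nonnegative integer this process terminates, necessarily at a model with $\mu \le r/2$, which is the claimed bound with $r/2 = g/(n-1)+1$ by \eqref{r}. For the special case $n=2$ with $g$ even, I would note that for double covers one may bound the multiplicity of the \emph{full} branch divisor $R$ (not only its horizontal part), because when $n=2$ any vertical component of $R$ can be absorbed into $\mathfrak{d}$; here $r/2 = g+1$ follows from \eqref{r} with $h=0$, $n=2$.

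\smallskip

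\noindent
\textbf{Main obstacle.}
The delicate part is controlling what happens to the vertical behavior and to the other singular points under the elementary transformation — in particular verifying that the transformed divisor is still the pushforward of a smooth $\widetilde{R}$ compatible with a primitive cyclic covering structure, so that the new model is genuinely admissible in the sense of Definition~\ref{primdef}, and that $R_h'$ remains reduced with the multiplicity bookkeeping $r-m$ exactly as claimed. Ensuring that the descent cannot cycle (i.e.\ that strict decrease of $\mu$ really holds, with the boundary case $m = r/2$ handled so that one stops rather than loops) is where the argument must be pinned down carefully, and this is where the parity distinction between $n=2$ and $n\ge 3$ enters.
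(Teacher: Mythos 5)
Your overall strategy (elementary transformations at high-multiplicity points, plus a descent argument) is exactly the paper's, but the quantitative core of your argument is wrong, and the error lands precisely on the case that makes the lemma delicate. Your formula $\mathrm{mult}_{x'}(R_h')=r-m$ does not hold in general. The correct bookkeeping comes from Lemma~\ref{multlem}: writing $R_1=\psi_1^*R-n\left[\frac{m}{n}\right]E_1=(\psi_1')^*R'-n\left[\frac{m'}{n}\right]E_1'$ and intersecting with $\psi_1^*\Gamma=E_1+E_1'$ gives only
$$
\left[\frac{m}{n}\right]+\left[\frac{m'}{n}\right]=\frac{r}{n},
$$
and since $m'$ is constrained to lie in $n\mathbb{Z}\cup(n\mathbb{Z}+1)$, the conclusion is $m'\le r/2+1$, not $m'=r-m$. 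Concretely, $m'=r-n\left[\frac{m}{n}\right]$ or $r-n\left[\frac{m}{n}\right]+1$, so corrections of $+1$ or $+2$ appear exactly when $E_1$ or $E_1'$ is a component of $R_1$, i.e.\ in the $n\mathbb{Z}+1$ cases; these are invisible to your computation, which moreover asserts incorrectly that the local intersection $(R_h\Gamma_x)_x$ equals $m$ (it exceeds $m$ whenever a branch of $R_h$ is tangent to the fiber at $x$). Consequently your claimed strict decrease $m'=r-m<m$ fails in the boundary case $m'=r/2+1$, where the multiplicity does not drop at all — precisely the case you flag as ``the main obstacle'' without resolving it.

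The paper closes this gap with the congruence constraint of Lemma~\ref{multlem}: $m'=r/2+1$ forces $r/2\in n\mathbb{Z}$ and $m'\in n\mathbb{Z}+1$, hence $E_1'\subset R_1$, hence $\Gamma\subset R$ and $m=r/2+1$, so $\mathrm{mult}_x(R_h)=r/2$; the horizontal bound already holds at $x$ and no transformation is performed there — this is exactly why the statement bounds $R_h$ rather than $R$ in general. Your explanation of the $n=2$, $g$ even refinement (``vertical components can be absorbed into $\mathfrak{d}$'') is also not the actual mechanism: the point is parity. For $n=2$ and $g$ even, $r/2=g+1$ is odd, so $\left[\frac{m'}{2}\right]\le\left[\frac{r}{4}\right]=\frac{g}{2}$ already yields $m'\le g+1=r/2$, which kills the boundary case entirely and gives the bound for the full divisor $R$. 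Finally, your check that ``no other point acquires a large multiplicity'' is misdirected: the elementary transformation merges all of $R\cap\Gamma_x\setminus\{x\}$ into the single point $x'$ (this is what $m'$ measures), while the genuinely new actual points on the new fiber are the promoted infinitely near points of $x$; termination must therefore be run against the finite set of singular points of $R$, as in the paper's inductive step, not against your maximum-multiplicity invariant, whose strict decrease rests on the false formula.
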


\begin{proof}
We take a relatively minimal model $\varphi\colon W\rightarrow B$ of $\widetilde{\varphi}$ arbitrarily. Suppose that there exists a point $x$ of $R$ at which $R$ has the multiplicity greater than $r/2$. Let $\Gamma$ be the fiber of $\varphi$ through $x$. We will perform the elementary transformation at $x$ as follows: Let $\psi_1\colon W_1\rightarrow W$ be the blow-up at $x$, $E_1=\psi_1^{-1}(x)$ the exceptional $(-1)$-curve and $E'_1$ the proper transform of $\Gamma$. Then, $E'_1$ is also a $(-1)$-curve. Let $\psi_1^\prime\colon W_1\rightarrow W'$ be 
the contraction of $E'_1$ and $\varphi' \colon W' \rightarrow B$ the induced fibration. Set $x^\prime=\psi_1^\prime(E_1^\prime)$ and $\Gamma^\prime=\psi_1^\prime(E_1)$. Then, $\Gamma^\prime$ is the fiber of $\varphi^\prime$ over $t=\varphi(\Gamma)$. Let $m$ denotes the multiplicity of $R$ at $x$. Put $R_1=\psi_1^\ast R-n[m/n]E_1$ and $R^\prime=(\psi_1^\prime)_\ast R_1$. Let $m^\prime$ denotes the multiplicity of $R^\prime$ at $x^\prime$. Then it follows from Lemma~\ref{multlem} that $R_1=(\psi_1^\prime)^\ast R^\prime-n[m^\prime/n]E_1^\prime$. Since $R\Gamma=R_1\psi_1^\ast \Gamma=R_1(E_1+E_1^\prime)$, we have
\begin{equation*}
\left[\frac{m}{n}\right]+\left[\frac{m^{\prime}}{n}\right]=\frac{r}{n}.
\end{equation*}
Moreover, since $m> r/2$, we get
\begin{equation*}
\left[\frac{m^\prime}{n}\right]\le \frac{r}{2n}\le \left[\frac{m}{n}\right].
\end{equation*}
Hence we have $m^\prime\le r/2+1$ from Lemma \ref{multlem}. 
Moreover, $m'\le r/2$ when $n=2$ and $g$ is even.
If $m^\prime=r/2+1$, then we have $m^\prime\in n\mathbb{Z}+1$ and hence $E_1^\prime$ is contained in $R_1$. 
In particular, $\Gamma$ is contained in $R$. 
Since $r/2\in n\mathbb{Z}$, we have $m=r/2+1$. 
Hence the multiplicity of $R_h$ at $x$ is $r/2$. 
If $m^\prime\le r/2$, we replace the relatively minimal model $W$ with $W^\prime$. 
Since the number of singularities of $R$ are finite, we obtain a relatively minimal model satisfying the condition inductively.
\end{proof}

In the sequel, we will tacitly assume that our relatively minimal model $\varphi\colon W\rightarrow B$ of $\widetilde{\varphi}$ enjoys the property of 
the lemma.

\begin{rem}\label{3.2}
Recall that $r$ is a multiple of $n$.
If $n\geq 3$ and $R$ has a singular point of multiplicity $m\geq 2$, then 
$m\leq r/2+1$ from Lemma~\ref{eltrlem}, while $m\in n\mathbb{Z}$ or $m\in n\mathbb{Z}+1$ by Lemma~\ref{multlem}. It follows that $r\geq 2n$ when $n\geq 3$ and $R$ has a singular point. If $r=n\geq 3$, then $R$ is non-singular and we have $K_f^2=\lambda_{g,0,n}\chi_f$ by Lemma~\ref{fprimelem}.
If $n=2$, then $r=2g+2\geq 6$ since $g\geq 2$.
\end{rem}

Recall that the gonality of a non-singular projective curve is the 
minimum of the degree of surjective morphisms to $\mathbb{P}^1$.
The gonality of a fibered surface is defined to be that of a general 
fiber (cf.\ \cite{cliff}).

\begin{lem}\label{gon}
The gonality of a primitive cyclic covering fibration of type $(g,0,n)$ is $n$, when $r\geq 2n$. 
\end{lem}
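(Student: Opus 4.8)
The plan is to work on a general fibre and reduce the statement to a purely curve-theoretic fact about cyclic covers of $\mathbb{P}^1$. Since $\rho\colon \widetilde{S}\to S$ is an isomorphism over a general point of $B$, a general fibre $F$ of $f$ is isomorphic to a general fibre $\widetilde{F}$ of $\widetilde{f}$, and the restriction $\pi:=\widetilde{\theta}|_{\widetilde{F}}\colon \widetilde{F}\to \widetilde{\Gamma}\cong \mathbb{P}^1$ is a classical $n$-cyclic covering. As $\widetilde{R}$ is smooth and a general $\widetilde{\Gamma}$ meets it transversally in $r$ reduced points, $\pi$ is a cyclic cover of degree $n$ totally ramified over these $r$ points and \'etale elsewhere; in particular $\mathrm{gon}(F)\le n$. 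By \eqref{r} with $h=0$ one has $g=(n-1)(r-2)/2$, so the hypothesis $r\ge 2n$ gives $g\ge (n-1)^2$. It therefore suffices to prove: if $C$ is a smooth curve carrying a degree-$n$ cyclic covering $C\to \mathbb{P}^1$ totally ramified over $r\ge 2n$ points, then $\mathrm{gon}(C)=n$. I would prove this by induction on $n$, the case $n=2$ being immediate since then $\mathrm{gon}(C)=1$ would force $g=0$, contradicting $g\ge (n-1)^2\ge 1$.

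For the inductive step, suppose $\mathrm{gon}(C)=k<n$ and let $\phi\colon C\to \mathbb{P}^1$ have degree $k$ (again $k=1$ is excluded by $g\ge 1$). The idea is to apply the Castelnuovo--Severi inequality to the two maps $\pi$ and $\phi$. If they are independent, i.e. $\pi\times\phi\colon C\to \mathbb{P}^1\times \mathbb{P}^1$ is birational onto its image, the inequality yields $g\le (n-1)(k-1)\le (n-1)(n-2)$, which contradicts $g\ge (n-1)^2$. Hence $\pi$ and $\phi$ must factor through a common covering $\psi\colon C\to C'$ of degree $e>1$.

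It remains to analyse this composite case, which is the main point. Because $\pi$ is Galois with group $\mathbb{Z}/n$, the intermediate curve $C'$ is itself a cyclic covering $\pi'\colon C'\to \mathbb{P}^1$ of degree $n'=n/e$, and inspecting the monodromy (the image of a generator of $\mathbb{Z}/n$ in the quotient $\mathbb{Z}/n'$ again has order $n'$) shows that $\pi'$ is totally ramified over the same $r$ points; thus $1<n'<n$ and $r\ge 2n>2n'$, so $C'$ meets the hypotheses of the curve-theoretic claim with the smaller degree $n'$. On the other hand $\phi$ factors as $\phi=\phi'\circ \psi$ with $\deg\phi'=k/e<n/e=n'$, whence $\mathrm{gon}(C')\le k/e<n'$, contradicting the induction hypothesis applied to $C'$. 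This closes the induction and proves $\mathrm{gon}(C)=n$. I expect the delicate step to be precisely this composite case: one must verify that the common subcover of the gonality pencil and the cyclic pencil is forced to be an intermediate \emph{cyclic} cover retaining the same large number $r$ of total ramification points, so that the induction hypothesis becomes available; the non-composite case falls out immediately from Castelnuovo--Severi together with the lower bound $g\ge (n-1)^2$ coming from $r\ge 2n$.
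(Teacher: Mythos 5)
Your proof is correct, and at heart it is the same Castelnuovo--Severi argument as the paper's: both pit the cyclic pencil against a hypothetical pencil of degree $k<n$ via the product map to $\mathbb{P}^1\times\mathbb{P}^1$, and both exploit the fact that total ramification over the $r$ branch points persists on any intermediate quotient. The difference lies in how the composite (dependent) case is closed. The paper does it in one shot, uniformly in the degree $m$ of $F$ onto its image: the image has bidegree $(n/m,k/m)$, hence arithmetic genus $(n/m-1)(k/m-1)$; the normalization $F'$ carries the induced degree-$n/m$ cover of $\mathbb{P}^1$, still totally ramified over the $r$ points, so Hurwitz gives $2g(F')=(n/m-1)(r-2)$, and comparison with the arithmetic genus yields $r\le 2k/m$, absurd since $r\ge 2n$ and $k<n$ (the birational case is just $m=1$). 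You instead quote Castelnuovo--Severi as a black box for the independent case and run an induction on $n$ in the composite case, using the Galois correspondence to see that the intermediate curve $C'$ is again a \emph{cyclic} cover totally ramified over the same $r$ points with $r\ge 2n>2n'$. Your induction is valid: since $e$ divides $k$ and $e\le k<n$, one has $n'=n/e\ge 2$, so the recursion terminates at the base case; and any extra ramification only increases the genus, so the lower bound $g\ge(n-1)^2$ survives the slightly loose phrasing of your inductive claim (which does not require the cover to be \'etale away from the $r$ points). One simplification worth noting: cyclicity of $\pi'$ is never actually needed --- total ramification of $C'\to\mathbb{P}^1$ over the $r$ points follows set-theoretically from that of $\pi$, because the unique preimage in $C$ maps to a unique point of $C'$ --- so the Galois-theoretic step can be dropped and the induction run under the weaker hypothesis, which is in effect what lets the paper avoid induction altogether. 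What each approach buys: the paper's version is shorter and self-contained (it reproves the needed instance of Castelnuovo--Severi via the bidegree bound and extracts the sharper quantitative contradiction $r\le 2k/m$), while yours cleanly separates the standard inequality from the structural analysis of the dependent case.
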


\begin{proof}
We use the so-called Castelnuovo-Severi inequality. 
Assume contrary that the general fiber $F$ 
 has a morphism onto $\mathbb{P}^1$ of degree $k<n$.
This together with the natural map $F\to F/\langle 
 \sigma|_F\rangle\simeq \mathbb{P}^1$ defines a morphism $\Phi:F\to \mathbb{P}^1\times \mathbb{P}^1$.  
If $\Phi$ is of degree $m$ onto the image $\Phi(F)$, then $m$ is a 
 common divisor of $n,k$ and $\Phi(F)$ is a 
 divisor of bidegree $(n/m,k/m)$. In particular, the arithmetic genus of 
 $\Phi(F)$ is given by $(n/m-1)(k/m-1)$.
Now, let $F'$ be the normalization of $\Phi(F)$.
Since the cyclic $n$-covering $F\to \mathbb{P}^1$ factors through $F'$, we 
 see that the induced covering $F'\to \mathbb{P}^1$ of degree $n/m$ is a 
 totally ramified covering branched over $r$ points.
Then, by the Hurwitz formula, we have $2g(F')=(n/m-1)(r-2)$. 
Since the genus $g(F')$ of $F'$ is not bigger than the arithmetic genus 
 of $\Phi(F)$, we get $r\leq 2(k/m)$, which is impossible, since $r\geq 2n$ and $k<n$.
A more careful study shows that the gonality pencil is unique when 
 $r\geq 3n$ and that the gonality of $F$ is not less than $n/2$ when $r=n$.
\end{proof}

As we saw in \S1, $\widetilde{f}$-vertical $(-1)$-curves in 
$\mathrm{Fix}(\widetilde{\sigma})$ are in 
one-to-one correspondence with $\widetilde{\varphi}$-vertical 
$(-n)$-curves in $\widetilde{R}$ via $\widetilde{\theta}$. 
So we need to know how $\widetilde{\varphi}$-vertical $(-n)$-curves in 
$\widetilde{R}$ appear during the process of ``modulo $n$ resolution'' 
$\widetilde{\psi}\colon \widetilde{W}\to W$ of $R$.

Let $L\simeq \mathbb{P}^1$ be a $\widetilde{\varphi}$-vertical curve contained in $\widetilde{R}$.
Then we have $\widetilde{\theta}^*L=nD$ for some $\widetilde{f}$-vertical $D\simeq \mathbb{P}^1$ 
contained in $\mathrm{Fix}(\widetilde{\sigma})$.
If $D^2=-a$, then $L^2=-an$.
The image of $L$ by the natural morphism $\widetilde{\psi}\colon \widetilde{W}\to W$ is either a point or a fiber of $\varphi$.
In either case, we define a curve $C$ and a number $c$ as follows.

\begin{itemize}
\item If $\widetilde{\psi}(L)$ is a point, then $L$ is the proper transform of an exceptional $(-1)$-curve, say, $E_j$.
Since $E_j^2=-1$ and $L^2=-an$, $E_j$ is blown up $an-1$ times in total to get $L$.
We put $C=E_j$ and $c=an-1$.

\item If $\widetilde{\psi}(L)$ is a fiber $\Gamma$ of $\varphi$, then $L$ is the proper transform of $\Gamma$.
Since $\Gamma^2=0$ and $L^2=-an$, $\Gamma$ is blown up $an$ times in total to get $L$.
We put $C=\Gamma$ and $c=an$.
\end{itemize}

\noindent
In the former case, since $E_j$ is contained in $R_j$, the multiplicity 
of $R_{j-1}$ at the point obtained by contracting 
$E_j$ is in $n\mathbb{Z}+1$.
We will drop $j$ and simply write $R$ instead of $R_j$ for the time being.

Let $x_1,\dots, x_l$ be all the singular points of $R$ on $C$, and $m_i$ the multiplicity of $R$ at $x_i$.
Clearly we have $1\leq l \leq c$.
We consider a local analytic branch $D$ of $R-C$ around $x_i$ whose multiplicity at $x_i$ is $m\geq 2$ (i.e., $D$ has a cusp at $x_i$).
Then we have one of the following:

\smallskip

\noindent
(i) $D$ is not tangent to $C$ at $x_i$. 
If we blow $x_i$ up, then the proper transform of $D$ does not meet that of $C$.
Hence we have $(DC)_{x_i}=m$, where $(DC)_{x_i}$ denotes the local intersection number of $D$ and $C$ at $x_i$.

\setlength\unitlength{0.2cm}
\begin{figure}[H]
\begin{center}
 \begin{picture}(5,4)
 \put(-5,0){\line(1,0){10}}
 \qbezier(0,0)(0,3)(3,3)
 \qbezier(0,0)(0,3)(-3,3)
 \end{picture}
\end{center}
\end{figure}

\smallskip

\noindent
(ii) $D$ is tangent to $C$ at $x_i$. 
If we blow $x_i$ up, then one of the following three cases occurs.

\smallskip

(ii.1) The proper transform of $D$ is tangent to neither that of $C$ nor the exceptional $(-1)$-curve.

\setlength\unitlength{0.2cm}
\begin{figure}[H]
\begin{center}
 \begin{picture}(13,5)
 \put(-13,0){\line(1,0){10}}
 \qbezier(-8,0)(-5,0)(-5,3)
 \qbezier(-8,0)(-5,0)(-5,-3)
 \put(3,0){\line(1,0){10}}
 \qbezier(8,0)(11,3)(11,2)
 \qbezier(8,0)(11,3)(9,4)
 \put(8,-4){\line(0,1){8}}
 \put(2,0){\vector(-1,0){4}}
 \end{picture}
\end{center}
\end{figure}
\ \\

\smallskip

(ii.2) The proper transform of $D$ is tangent to the exceptional $(-1)$-curve.
Then the multiplicity $m'$ of the proper transform of $D$ at the singular point is less than $m$, and 
we have $(DC)_{x_i}=m+m'$.

\setlength\unitlength{0.2cm}
\begin{figure}[H]
\begin{center}
 \begin{picture}(13,5)
 \put(-13,0){\line(1,0){10}}
 \qbezier(-8,0)(-5,0)(-5,3)
 \qbezier(-8,0)(-5,0)(-5,-3)
 \put(3,0){\line(1,0){10}}
 \qbezier(8,0)(8,3)(11,3)
 \qbezier(8,0)(8,3)(5,3)
 \put(8,-4){\line(0,1){8}}
 \put(2,0){\vector(-1,0){4}}
 \end{picture}
\end{center}
\end{figure}
\ \\

\smallskip

(ii.3) The proper transform of $D$ is still tangent to that of $C$.

\setlength\unitlength{0.2cm}
\begin{figure}[H]
\begin{center}
 \begin{picture}(13,5)
 \put(-13,0){\line(1,0){10}}
 \qbezier(-8,0)(-5,0)(-5,3)
 \qbezier(-8,0)(-5,0)(-5,-3)
 \put(3,0){\line(1,0){10}}
 \qbezier(8,0)(11,0)(11,3)
 \qbezier(8,0)(11,0)(11,-3)
 \put(8,-4){\line(0,1){8}}
 \put(2,0){\vector(-1,0){4}}
 \end{picture}
\end{center}
\end{figure}
\ \\

\medskip

\noindent
We perform blowing-ups at $x_i$ and points infinitely near to it.
Then the case (ii.3) may occur repeatedly, but at most a finite number of times.
Suppose that the proper transform of $D$ becomes not tangent to that of $C$ just after $k$-th blow-up.
If the proper transform of $D$ is as in (ii.1) after $k$-th blow-up (or $D$ is as in (i) when $k=0$), then we have $(DC)_{x_i}=(k+1)m$.
If the proper transform of $D$ is as in (ii.2) after $k$-th blow-up, then we have $(DC)_{x_i}=km+m'$.
In either case, it is convenient to consider as if $D$ consists of $m$ local branches $D_1,\dots, D_m$ smooth at $x_i$ 
and such that $(D_jC)_{x_i}=k+1$ for $j=1,\dots, m$ in the former case and 
$$
(D_jC)_{x_i}=\left\{
\begin{array}{cl}
k, & \text{for }j=1,\dots, m-m',\\
k+1, & \text{for }j=m-m'+1,\dots, m
\end{array}
\right.
$$
in the latter case. 
We call $D_j$ a {\em virtual local branch} of $D$.

\begin{notation}\label{siknotation}
For a positive integer $k$, we let $s_{i,k}$ be the number of such virtual local branches $D_{\bullet}$ satisfying 
$(D_\bullet C)_{x_i}=k$, among those of all local analytic branches of $R-C$ around $x_i$.
Here, when $\mathrm{mult}_{x_i}(D)=1$, we regard $D$ itself as a virtual local branch.
We let $i_{\max}$ be the biggest integer $k$ satisfying $s_{i,k}\neq 0$.
\end{notation}

We put $x_{i,1}=x_i$ and $m_{i,1}=m_i$.
Let $\psi_{i,1}\colon W_{i,1}\to W$ be the blow-up at $x_{i,1}$ and put $E_{i,1}=\psi_{i,1}^{-1}(x_{i,1})$ and 
$R_{i,1}=\psi_{i,1}^*R-n[m_{i,1}/n]E_{i,1}$.
Inductively, we define $x_{i,j}$, $m_{i,j}$, $\psi_{i,j}\colon W_{i,j}\to W_{i,j-1}$, $E_{i,j}$ and $R_{i,j}$ 
to be the intersection point of the proper transform of $C$ and $E_{i,j-1}$, the multiplicity of $R_{i,j-1}$ at $x_{i,j}$, 
the blow-up of $W_{i,j-1}$ at $x_{i,j}$, the exceptional curve for $\psi_{i,j}$ and 
$R_{i,j}=\psi_{i,j}^*R_{i,j-1}-n[m_{i,j}/n]E_{i,j}$, respectively.
Put $i_{\mathrm{bm}}=\max\{j\mid m_{i,j}>1\}$, that is, the number of 
blowing-ups occuring over $x_i$.
We may assume that $i_{\mathrm{bm}}\geq (i+1)_{\mathrm{bm}}$ for 
$i=1,\dots,l-1$ after rearranging the index if necessary.

\begin{lem}\label{mslem}
With the above notation and assumption, the following hold.

\smallskip

\noindent
$(1)$ If $n\geq 3$, then $i_{\mathrm{bm}}=i_{\max}$ for all $i$. 
If $n=2$, then $i_{\mathrm{bm}}=i_{\max}$ $($resp.\ 
 $i_{\mathrm{bm}}=i_{\max}+1)$ if and only if $m_{i,i_{\max}}\in 
 2\mathbb{Z}$ $($resp.\ $m_{i,i_{\max}}\in 2\mathbb{Z}+1)$.

\smallskip

\noindent
$(2)$ $m_{i,1}=\sum_{k=1}^{i_{\max}}s_{i,k}+1$ and $m_{i,i_{\mathrm{bm}}}\in n\mathbb{Z}$.

\smallskip

\noindent
$(3)$ $m_{i,j}\in n\mathbb{Z}$ $($resp. $n\mathbb{Z}+1$$)$ if and only if $m_{i,j+1}=\sum_{k=j+1}^{i_{\max}}s_{i,k}+1$ 
$($resp. $\sum_{k=j+1}^{i_{\max}}s_{i,k}+2$$)$.

\smallskip

\noindent
$(4)$ $((R-C)C)_{x_i}=\sum_{k=1}^{i_{\mathrm{max}}}k s_{i,k}$.

\smallskip

\noindent
$(5)$ $c=\sum_{i=1}^l i_{\mathrm{bm}}$.
\end{lem}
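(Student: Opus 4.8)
The plan is to reduce all five assertions to a single recursive description of the multiplicities $m_{i,j}$, obtained by bookkeeping which local branches survive along the chain of infinitely near points $x_{i,1},x_{i,2},\dots$ lying on the successive proper transforms of $C$. First I would record the master formula. Note that $C\subset R$ (its proper transform $L$ lies in $\widetilde R$) and $C$ is smooth, so $C$ contributes $1$ to every $m_{i,j}$. A virtual local branch $D_\bullet$ of $R-C$ with $(D_\bullet C)_{x_i}=k$ is smooth and meets $C$ with contact order $k$; hence after each blow-up its contact order with the proper transform of $C$ drops by exactly one, so its proper transform passes through $x_{i,j}$ precisely when $k\ge j$. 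Finally, among the exceptional curves only $E_{i,j-1}$ can pass through $x_{i,j}$ (the earlier ones separate off the proper transform of $C$ by transversality), and by Lemma~\ref{multlem}(1) it lies in $R_{i,j-1}$ exactly when $m_{i,j-1}\in n\mathbb{Z}+1$. Putting these together gives
\[
m_{i,1}=1+\sum_{k=1}^{i_{\max}}s_{i,k},\qquad m_{i,j}=1+\sum_{k=j}^{i_{\max}}s_{i,k}+\varepsilon_{i,j-1}\quad(j\ge 2),
\]
where $\varepsilon_{i,j-1}=1$ if $m_{i,j-1}\in n\mathbb{Z}+1$ and $\varepsilon_{i,j-1}=0$ if $m_{i,j-1}\in n\mathbb{Z}$. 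The first formula is the first half of (2), and the second, read together with the dichotomy of Lemma~\ref{multlem}(1) (which makes the two possible values of $m_{i,j+1}$ distinct), yields the biconditional (3).

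Assertion (4) I would prove separately and directly: the local intersection number is additive over local branches, and by the very construction of the virtual local branches each analytic branch $D$ of multiplicity $m$ is split into $m$ smooth virtual branches whose contact orders with $C$ sum to $(DC)_{x_i}$. Grouping the virtual branches by their contact order $k$ then gives $((R-C)C)_{x_i}=\sum_{k=1}^{i_{\max}}k\,s_{i,k}$.

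Next I would analyze termination, which simultaneously gives the second half of (2) and all of (1). Since $m_{i,j}\ge 1+s_{i,i_{\max}}\ge 2$ for every $j\le i_{\max}$, the blow-ups certainly reach step $i_{\max}$, and they stop exactly when the multiplicity drops to $\le 1$; so I would examine the values just past $i_{\max}$. For $j\ge i_{\max}$ the sum $\sum_{k\ge j+1}s_{i,k}$ vanishes, so the recursion forces $m_{i,j+1}\in\{1,2\}$, equal to $1$ when $m_{i,j}\in n\mathbb{Z}$ and to $2$ when $m_{i,j}\in n\mathbb{Z}+1$. The decisive input is Lemma~\ref{multlem}(1), which only permits $m_{i,j}\in n\mathbb{Z}\cup(n\mathbb{Z}+1)$: when $n\ge 3$ the value $2$ is \emph{forbidden}, so $m_{i,i_{\max}}\in n\mathbb{Z}$, giving $m_{i,i_{\max}+1}=1$ and hence $i_{\mathrm{bm}}=i_{\max}$ with $m_{i,i_{\mathrm{bm}}}\in n\mathbb{Z}$. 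When $n=2$ the value $2$ lies in $2\mathbb{Z}$, so if $m_{i,i_{\max}}\in 2\mathbb{Z}$ the process stops at $i_{\max}$, whereas if $m_{i,i_{\max}}\in 2\mathbb{Z}+1$ one extra blow-up produces $m_{i,i_{\max}+1}=2\in 2\mathbb{Z}$ and then $m_{i,i_{\max}+2}=1$, so $i_{\mathrm{bm}}=i_{\max}+1$; in either case $m_{i,i_{\mathrm{bm}}}$ is even. This proves (1) and the remaining part of (2).

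Finally, (5) I would obtain from a self-intersection count. Every blow-up of $\widetilde\psi$ whose center lies on a proper transform of $C$ lowers the self-intersection of that proper transform by one, and these centers are exactly the $x_{i,j}$ with $1\le i\le l$ and $1\le j\le i_{\mathrm{bm}}$ (the chains over distinct $x_i$ are disjoint, and after step $i_{\mathrm{bm}}$ the proper transform of $C$ meets the transform of $R$ only in a smooth point, so no further center lies on it). Hence $L^2=C^2-\sum_{i=1}^{l}i_{\mathrm{bm}}$; since $C^2=-1,\ L^2=-an$ in the exceptional-curve case and $C^2=0,\ L^2=-an$ in the fibre case, this reads $\sum_i i_{\mathrm{bm}}=an-1=c$ and $\sum_i i_{\mathrm{bm}}=an=c$, respectively. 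I expect the main obstacle to be the rigorous justification of the master formula in the first paragraph — specifically the two geometric claims that each virtual branch loses exactly one unit of contact per blow-up and that $E_{i,j-1}$ is transverse to the proper transform of $C$ (so it cannot re-contribute at $x_{i,j+1}$); once this local picture is pinned down, the remaining assertions are bookkeeping together with the divisibility constraint of Lemma~\ref{multlem}.
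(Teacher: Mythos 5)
Your proposal is correct and takes essentially the same route as the paper: the paper's one-line key step is exactly your master formula ($m_{i,j}$ is the number of virtual local branches of $R_{i,j-1}$ through $x_{i,j}$, with the $+1$ contributed by $C$ and the extra unit contributed by $E_{i,j-1}$ precisely when $m_{i,j-1}\in n\mathbb{Z}+1$, by Lemma~\ref{multlem}), and your termination analysis for (1)--(2) and blow-up count on the proper transforms of $C$ for (5) coincide with the paper's argument, only written out in more detail. If anything you are more careful than the paper, e.g.\ in explicitly excluding the multiplicity $2$ after step $i_{\max}$ when $n\geq 3$ and in making the self-intersection bookkeeping behind $c=\sum_i i_{\mathrm{bm}}$ explicit, where the paper simply asserts the count.
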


\begin{proof}
(1) is clear from the definitions of $i_{\max}$ and $i_{\mathrm{bm}}$.
Since $m_{i,j}$ is the number of virtual local branches of $R_{i,j-1}$ through $x_{i,j}$, 
we get the first equality of (2) and (3) by Lemma~\ref{multlem}.
In (2), ``$+1$'' is the contribution of $C$.
If $m_{i,i_{\mathrm{max}}}\in n\mathbb{Z}+1$, then $x_{i,i_{\max+1}}$ is a double point, 
which is impossible when $n\geq 3$ by Lemma~\ref{multlem}.
Thus we have shown (2). (4) is clear from the definition of $s_{i,k}$.
For each $i$, we blow up $x_i\in C$ and its infinitely near points on 
 the proper transforms of $C$ exactly $i_{\max}$ times.
Hence (5).
\end{proof}

Put $t=(R-C)C$. It is the number of branch points $r$ 
if $\widetilde{\varphi}(L)$ is a fiber of $\varphi$, while it is the 
multiplicity at the point $x$ to which $C$ is contracted if 
$\widetilde{\varphi}(L)$ is a point.
By Lemma~\ref{mslem} (4), we get
$$
t=\sum_{i=1}^l\sum_{k=1}^{i_{\max}} ks_{i,k}.
$$
Let $c_i$ be the cardinality of $\{j\mid m_{i,j}\in n\mathbb{Z}+1\}$.
Clearly we have $0\leq c_i\leq i_{\mathrm{bm}}-1$.
Set $d_{i,j}=[m_{i,j}/n]$.

\begin{prop}\label{tcprop}
The following equalities hold:
$$
t+c+\sum_{i=1}^lc_i=\sum_{i=1}^l\sum_{j=1}^{i_{\mathrm{bm}}}m_{i,j},\quad
\frac{t+c}{n}=\sum_{i=1}^l\sum_{j=1}^{i_{\mathrm{bm}}}d_{i,j}.
$$
\end{prop}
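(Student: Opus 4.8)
The plan is to prove the two displayed equalities by first reducing the second to the first, and then establishing the first by a purely local computation at each singular point $x_i$ of $R$ on $C$. For the reduction, recall from Lemma~\ref{multlem}(1) that every $m_{i,j}$ lies in $n\mathbb{Z}$ or in $n\mathbb{Z}+1$; writing $\epsilon_{i,j}\in\{0,1\}$ for this residue, we have $m_{i,j}=nd_{i,j}+\epsilon_{i,j}$ with $d_{i,j}=[m_{i,j}/n]$, and by the definition of $c_i$ we have $c_i=\#\{j\mid\epsilon_{i,j}=1\}=\sum_j\epsilon_{i,j}$. Summing over $i,j$ gives $\sum_{i,j}m_{i,j}=n\sum_{i,j}d_{i,j}+\sum_i c_i$. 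Hence, once the first equality $t+c+\sum_i c_i=\sum_{i,j}m_{i,j}$ is known, subtracting $\sum_i c_i$ from both sides and dividing by $n$ immediately yields the second equality $\frac{t+c}{n}=\sum_{i,j}d_{i,j}$.

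So the heart of the matter is the first equality, which I would prove one $x_i$ at a time, that is, by showing $\sum_{j=1}^{i_{\mathrm{bm}}}m_{i,j}=((R-C)C)_{x_i}+i_{\mathrm{bm}}+c_i$ for each $i$ and then summing over $i$, using Lemma~\ref{mslem}(4) to recognize $\sum_i((R-C)C)_{x_i}=t$ and Lemma~\ref{mslem}(5) to recognize $\sum_i i_{\mathrm{bm}}=c$. To obtain the local identity I abbreviate the tail sums $S_{i,j}:=\sum_{k=j}^{i_{\max}}s_{i,k}$. Lemma~\ref{mslem}(2) gives $m_{i,1}=S_{i,1}+1$, and Lemma~\ref{mslem}(3) gives the recursion $m_{i,j+1}=S_{i,j+1}+1+\epsilon_{i,j}$ valid for $j=1,\dots,i_{\mathrm{bm}}-1$. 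Substituting this and rearranging $\sum_{j=1}^{i_{\mathrm{bm}}}m_{i,j}$, the constant $+1$'s contribute $i_{\mathrm{bm}}$, the $\epsilon_{i,j}$'s contribute $\sum_{j=1}^{i_{\mathrm{bm}}-1}\epsilon_{i,j}$, and the tail sums contribute $\sum_{j=1}^{i_{\mathrm{bm}}}S_{i,j}$. The key combinatorial step is the double counting $\sum_{j}S_{i,j}=\sum_{k=1}^{i_{\max}}k\,s_{i,k}$, since each $s_{i,k}$ appears in exactly the $k$ tail sums $S_{i,1},\dots,S_{i,k}$, and its right-hand side equals $((R-C)C)_{x_i}$ by Lemma~\ref{mslem}(4). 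Finally, since $m_{i,i_{\mathrm{bm}}}\in n\mathbb{Z}$ by Lemma~\ref{mslem}(2) we have $\epsilon_{i,i_{\mathrm{bm}}}=0$, so $\sum_{j=1}^{i_{\mathrm{bm}}-1}\epsilon_{i,j}=\sum_{j=1}^{i_{\mathrm{bm}}}\epsilon_{i,j}=c_i$, which assembles the local identity.

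The arithmetic rearrangement is routine, so the only genuine care needed is bookkeeping: applying the recursion on the correct range $j=1,\dots,i_{\mathrm{bm}}-1$ and confirming that $c_i$ counts all indices with $\epsilon_{i,j}=1$ (as also guaranteed by the stated bound $0\le c_i\le i_{\mathrm{bm}}-1$). The one subtlety I anticipate being the main obstacle is the exceptional case $n=2$ with $i_{\mathrm{bm}}=i_{\max}+1$ allowed by Lemma~\ref{mslem}(1): there the recursion runs one step further, but since $S_{i,i_{\max}+1}$ is an empty sum and hence $0$, the extra tail sum contributes nothing to $\sum_j S_{i,j}$, while $m_{i,i_{\mathrm{bm}}}=2\in2\mathbb{Z}$ keeps $\epsilon_{i,i_{\mathrm{bm}}}=0$. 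Thus the same local identity holds verbatim, and no separate argument for $n=2$ is required.
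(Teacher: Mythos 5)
Your proposal is correct and follows essentially the same route as the paper: the paper's proof is precisely the chain $t=\sum_i\sum_k ks_{i,k}=\sum_i\bigl(\sum_j m_{i,j}-i_{\mathrm{bm}}-c_i\bigr)=\sum_i\bigl(\sum_j m_{i,j}-c_i\bigr)-c$ via Lemma~\ref{mslem}, with the second equality deduced from the first exactly as you do. Your write-up merely makes explicit the telescoping of the recursion from Lemma~\ref{mslem}(2),(3), the double counting $\sum_j S_{i,j}=\sum_k k\,s_{i,k}$, and the harmless boundary case $n=2$, $i_{\mathrm{bm}}=i_{\max}+1$, all of which the paper leaves implicit.
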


\begin{proof}
It follows from Lemma~\ref{mslem} that
$$
t=\sum_{i=1}^l\sum_{k=1}^{i_{\max}} ks_{i,k} 
=\sum_{i=1}^l\left(\sum_{j=1}^{i_{\mathrm{bm}}}m_{i,j}-i_{\mathrm{bm}}-c_i\right) 
=\sum_{i=1}^l\left(\sum_{j=1}^{i_{\mathrm{bm}}}m_{i,j}-c_i\right)-c.
$$
Hence we have the first equality.
The second is clear from the first.
\end{proof}

\begin{lem}\label{mijlem}
The following hold:

\smallskip

\noindent
$(1)$ When $n\geq 3$, $m_{i,j}\geq m_{i,j+1}$ with equality sign holding if and only if 
 $s_{i,j}=0$ when $m_{i,j}\in n\mathbb{Z}$, or $s_{i,j}=1$ when 
 $m_{i,j}\in n\mathbb{Z}+1$.
When $n=2$, then $m_{i,j}+1\geq m_{i,j+1}$ with equality holding only if 
$m_{i,j}\in 2\mathbb{Z}+1$ and $m_{i,j+1}\in 2\mathbb{Z}$.

\smallskip

\noindent
$(2)$ If $m_{i,j-1}\in n\mathbb{Z}+1$ and $m_{i,j}\in n\mathbb{Z}$, then $m_{i,j}>m_{i,j+1}$.

\smallskip

\noindent
$(3)$ If $m_{i,j}=nd_{i,j}+1\in n\mathbb{Z}+1$, then 
 $d_{i,j}-d_{i,j+1}\geq n-3$.
\end{lem}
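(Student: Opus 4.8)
The whole argument rests on repackaging Lemma~\ref{mslem}(2),(3) as a single recursion. Abbreviate $T_j:=\sum_{k=j}^{i_{\max}}s_{i,k}$, and put $e_{j-1}=1$ if $m_{i,j-1}\in n\mathbb{Z}+1$ and $e_{j-1}=0$ if $m_{i,j-1}\in n\mathbb{Z}$, with the convention $e_0=0$. Then $m_{i,1}=T_1+1$ from Lemma~\ref{mslem}(2), while Lemma~\ref{mslem}(3) says that $m_{i,j}\in n\mathbb{Z}$ (resp.\ $n\mathbb{Z}+1$) forces $m_{i,j+1}=T_{j+1}+1$ (resp.\ $T_{j+1}+2$); together these give at once
\begin{equation*}
m_{i,j}=T_j+1+e_{j-1}\qquad(j=1,2,\dots).
\end{equation*}
I would first record this identity; all three statements are then extracted from it, using throughout that every $m_{i,j}$ lies in $n\mathbb{Z}\cup(n\mathbb{Z}+1)$ by Lemma~\ref{multlem}.

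Since $T_j-T_{j+1}=s_{i,j}$, the identity yields the single key relation
\begin{equation*}
m_{i,j}-m_{i,j+1}=s_{i,j}+e_{j-1}-e_j .
\end{equation*}
For $(1)$ with $n\geq 3$, the residue of $m_{i,j}$ fixes $e_j$, and a short case split shows the right-hand side is non-negative: the only dangerous case, $s_{i,j}=e_{j-1}=0$ with $m_{i,j}\in n\mathbb{Z}+1$, is excluded because it would put $m_{i,j+1}$ in the forbidden class $n\mathbb{Z}+2$. The same case split pins down exactly when the difference vanishes, giving the stated equality criterion. For $n=2$ there is no congruence obstruction, so the extremal value $e_{j-1}-e_j=-1$ becomes attainable; this yields $m_{i,j}+1\geq m_{i,j+1}$, with equality forcing $s_{i,j}=0$, $e_{j-1}=0$, $e_j=1$, that is, $m_{i,j}\in 2\mathbb{Z}+1$ and $m_{i,j+1}\in 2\mathbb{Z}$. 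Statement $(2)$ is then immediate: the hypotheses give $e_{j-1}=1$ and $e_j=0$, so $m_{i,j}-m_{i,j+1}=s_{i,j}+1\geq 1$.

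Part $(3)$ is the crux, and it is where the geometry of the resolution, not merely the recursion, must be used. Here $m_{i,j}\in n\mathbb{Z}+1$ means exactly $E_{i,j}\subset R_{i,j}$ by Lemma~\ref{multlem}(1). Hence every virtual branch that leaves the chain at the $j$-th blow-up, together with the proper transform of $E_{i,j-1}$ when $e_{j-1}=1$, meets the branch component $E_{i,j}$ at a point other than $x_{i,j+1}$. Each such point is itself the centre of a later blow-up, so Lemma~\ref{multlem} forbids its multiplicity from lying in the residues $2,\dots,n-1$ modulo $n$; this forces the departing branches to accumulate into clusters whose sizes are congruent to $0$ or $-1$ modulo $n$. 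The plan is to convert this accumulation into a lower bound for $s_{i,j}$, and then, through $m_{i,j}-m_{i,j+1}=s_{i,j}+e_{j-1}-1$ and $d_{i,j}=[m_{i,j}/n]$, into the bound $d_{i,j}-d_{i,j+1}\geq n-3$. The exclusion of the forbidden intermediate residues is exactly the content of Lemma~\ref{easylem}, which is why $n\geq 4$ is needed and why the method collapses at $n=3$. I expect the real difficulty to be the bookkeeping that produces the sharp constant $n-3$: one must follow simultaneously the component $E_{i,j}$, the transform of $E_{i,j-1}$, and the way the departing branches split among the off-chain centres, since a single-cluster estimate yields only a weak gap and has to be sharpened.
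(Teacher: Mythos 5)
Your treatment of parts (1) and (2) is correct and is essentially the paper's own argument in cleaner notation: the identity $m_{i,j}=T_j+1+e_{j-1}$ is a compact restatement of Lemma~\ref{mslem}(2),(3), and the case analysis of $m_{i,j}-m_{i,j+1}=s_{i,j}+e_{j-1}-e_j$, with the class $n\mathbb{Z}+2$ excluded by Lemma~\ref{multlem} when $n\geq 3$, is exactly how the paper proves these two parts.

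Part (3), however, is a genuine gap, and not merely because you stop at a plan: the plan aims at the wrong mechanism. You try to force $d_{i,j}-d_{i,j+1}\geq n-3$ through a lower bound on $s_{i,j}$ coming from local clustering of the departing branches at points of multiplicity $\equiv 0,1 \pmod n$ on $E_{i,j}$. A posteriori one would need roughly $s_{i,j}\geq n(n-3)$, but local clustering only says each off-chain singular point on $E_{i,j}$ absorbs at least $n-1$ branches, and nothing local prevents all departing branches from sitting in a single cluster; so this route caps out near $s_{i,j}\geq n-1$, as you yourself concede (``a single-cluster estimate yields only a weak gap''), and no bookkeeping at $x_{i,j}$ alone produces the constant $n-3$. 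The missing idea is global and covering-theoretic. Since $m_{i,j}\in n\mathbb{Z}+1$, Lemma~\ref{multlem}(1) puts $E_{i,j}$ inside the branch locus, so its proper transform in $\widetilde{W}$ is a $\widetilde{\varphi}$-vertical curve contained in $\widetilde{R}$; its pull-back under $\widetilde{\theta}$ is $n$ times a vertical curve in $\mathrm{Fix}(\widetilde{\sigma})$, so it is a $(-a'n)$-curve for some $a'\geq 1$, and hence $E_{i,j}$ is blown up exactly $c'=a'n-1$ times. The paper then simply applies the counting identity of Proposition~\ref{tcprop} to the chain data on $C'=E_{i,j}$, where $t'=(R_{i,j}-E_{i,j})E_{i,j}=m_{i,j}=nd_{i,j}+1$: every singular point has multiplicity at least $n$ (so each of the $c'$ blow-ups contributes $d'\geq 1$), and one of them is $x_{i,j+1}$ with $d'=d_{i,j+1}$, giving
\begin{equation*}
d_{i,j}+a'=\frac{m_{i,j}+c'}{n}=\sum d'_{p,q}\geq d_{i,j+1}+c'-1,
\end{equation*}
whence $d_{i,j}-d_{i,j+1}\geq a'(n-1)-2\geq n-3$. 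Note also that your framing of the hypotheses is off: Lemma~\ref{easylem} plays no role here (it is used in Section 1 for fixed-point types), statement (3) is trivially true for $n\leq 3$ (for $n=3$ it follows from part (1), for $n=2$ from $m_{i,j}+1\geq m_{i,j+1}$), and the paper's proof is uniform in $n$, so there is no reason a correct proof must ``need $n\geq 4$'' or ``collapse at $n=3$.''
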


\begin{proof}
If $m_{i,j} < m_{i,j+1}$, then we have $s_{i,j}=0$ and $m_{i,j}+1=m_{i,j+1}$, since $m_{i,j}-m_{i,j+1}=s_{i,j}-1$, $s_{i,j}$, or $s_{i,j}+1$. 
Then $m_{i,j}\in n\mathbb{Z}+1$ and we get $m_{i,j+1}\in n\mathbb{Z}+2$ by Lemma~\ref{mslem} (2).
This contradicts Lemma~\ref{multlem} when $n\ge 3$. 
Hence $m_{i,j}\ge m_{i,j+1}$. 
The rest of (1) follows from Lemma~\ref{mslem} (2). 
If $m_{i,j-1}\in n\mathbb{Z}+1$ and $m_{i,j}\in n\mathbb{Z}$, then $m_{i,j}=\sum_{k=j}^{i_{\rm max}} s_{i,k}+2$ and $m_{i,j+1}=\sum_{k=j+1}^{i_{\rm max}} s_{i,k}+1$ by Lemma~\ref{mslem} (2). Then, $m_{i,j}-m_{i,j+1}=s_{i,j}+1 >0$ and hence (2) follows.
Suppose that $m_{i,j}=nd_{i,j}+1\in n\mathbb{Z}+1$. 
Let $C^\prime$ be the exceptional curve $E_{i,j}$ and define $x_{i,j}^\prime$, $m_{i,j}^\prime$, $d_{i,j}^\prime$, $c^\prime$ etc. on $C^\prime$ similarly to $C$. 
Since $C^\prime$ become a $(-a^\prime n)$-curve by blowing up for some $a^\prime\ge 1$, we have $c^\prime=a^\prime n-1$. 
We may assume $m_{i,j+1}=m_{1,1}^\prime$. Then we have
$$
\frac{m_{i,j}+c^\prime}{n}=\sum_{p=1}^{l^\prime} \sum_{q=1}^{p_{\rm max}^\prime} d_{p,q}^\prime\ge d_{i,j+1}+c^\prime-1.
$$
Hence we get
\begin{eqnarray*}
d_{i,j}-d_{i,j+1}&\ge& a^\prime(n-1)-2\\
                 &\ge& n-3.
\end{eqnarray*}
and thus (3) follows.
\end{proof}

\begin{prop}
$(1)$ If $g<(n-1)(an(n-1)/2-1)$, then there are no 
 $\widetilde{\varphi}$-vertical 
$(-an)$-curves in $\widetilde{R}$.

\smallskip

\noindent
$(2)$ If $(n-1)(an(n-1)/2-1)\leq g<(n-1)(an^2-(a+1)n-1)$,
then any $\widetilde{\varphi}$-vertical $(-an)$-curve in $\widetilde{R}$ 
 is the proper transform of a fiber of $\varphi$.

\smallskip

\noindent
$(3)$ Let $x$ be a singular point of multiplicity $m\in n\mathbb{Z}+1$.
If the proper transform $L$ of the exceptional curve obtained by blowing 
 up at $x$ is a $(-an)$-curve, then
$[m/n]\geq a(n-1)-1$.
\end{prop}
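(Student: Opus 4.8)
The plan is to feed the two identities of Proposition~\ref{tcprop} into the trivial bound $d_{i,j}\ge 1$ and then read off each assertion. Recall the dichotomy just established: a $\widetilde\varphi$-vertical $(-an)$-curve $L\subset\widetilde R$ is either the proper transform of a fiber $\Gamma$ of $\varphi$, so that $C=\Gamma$, $c=an$ and $t=(R-C)C=r$, or the proper transform of an exceptional curve $E_j$, so that $C=E_j$, $c=an-1$ and $t=(R-C)C=m$, where $m\in n\mathbb{Z}+1$ is the multiplicity of $R_{j-1}$ at the point $x_j$ that $E_j$ contracts to. For every $j$ with $1\le j\le i_{\mathrm{bm}}$ one has $m_{i,j}\ge 2$ and $m_{i,j}\in n\mathbb{Z}\cup(n\mathbb{Z}+1)$ by Lemma~\ref{multlem}, hence $d_{i,j}=[m_{i,j}/n]\ge 1$; together with $c=\sum_i i_{\mathrm{bm}}$ from Lemma~\ref{mslem}~(5), the second identity of Proposition~\ref{tcprop} gives
\[
\frac{t+c}{n}=\sum_{i}\sum_{j=1}^{i_{\mathrm{bm}}}d_{i,j}\ge\sum_i i_{\mathrm{bm}}=c,
\]
that is, $t\ge(n-1)c$.

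Part (3) is now immediate: in the exceptional case $t=m$ and $c=an-1$, so $m\ge(n-1)(an-1)$, and since $m\in n\mathbb{Z}+1$ we get $[m/n]=(m-1)/n\ge\bigl((n-1)(an-1)-1\bigr)/n=a(n-1)-1$. The fiber case of Part (1) is equally quick: there $t\ge(n-1)c$ reads $r\ge an(n-1)$, whereas the hypothesis $g<(n-1)(an(n-1)/2-1)$, rewritten through $g=(n-1)(r/2-1)$, is exactly $r<an(n-1)$; so no $(-an)$-curve can come from a fiber.

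What remains is to exclude the exceptional case in the ranges of (1) and (2). Translating the hypotheses gives $r<an(n-1)$ in case (1) and $an(n-1)\le r<2n(a(n-1)-1)$ in case (2); since $an(n-1)\le 2n(a(n-1)-1)$ for $n\ge3$, both amount to $r<2n(a(n-1)-1)$. The decisive claim is that the multiplicity $m$ at the contraction point of an exceptional $(-an)$-curve satisfies $m\le r/2+1$. Granting it, the lower bound $m\ge(n-1)(an-1)$ forces $r\ge 2(n-1)(an-1)-2=2n(a(n-1)-1)$, contradicting $r<2n(a(n-1)-1)$. Hence no exceptional $(-an)$-curve exists in either range, which yields Part (1) (no such curve at all, together with the fiber case) and Part (2) (only the fiber case survives).

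The heart of the matter, and the step I expect to cost the most, is the bound $m\le r/2+1$. The point $x_j$ is in general infinitely near a genuine point $\bar x\in W$ lying on a fiber $\Gamma$; since multiplicities cannot increase going downward in the canonical resolution, $\bar x$ is itself a singular point of $R$, so Lemma~\ref{eltrlem} gives $\mathrm{mult}_{\bar x}(R)\le r/2+1$. I would then follow the chain of infinitely near points $\bar x=y_0,y_1,\dots,y_s=x_j$, write $m^{(t)}$ for the multiplicity of the resolution at $y_t$, and show that the truncations $d^{(t)}=[m^{(t)}/n]$ do not increase along it, invoking Lemma~\ref{mijlem}~(1) on each segment where the base curve is held fixed and checking that the multiplicities agree at the junctions where one passes to the next exceptional curve. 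The sharp constant then comes from a parity remark: if $\mathrm{mult}_{\bar x}(R)\in n\mathbb{Z}+1$ then $n\,[\mathrm{mult}_{\bar x}(R)/n]\le r/2$ and monotonicity already gives $m\le r/2+1$; if $\mathrm{mult}_{\bar x}(R)\in n\mathbb{Z}$, then reaching $m\in n\mathbb{Z}+1$ forces a step with $m^{(t)}\in n\mathbb{Z}$, $m^{(t+1)}\in n\mathbb{Z}+1$ and $m^{(t+1)}\le m^{(t)}$, which strictly lowers $d^{(t)}$, again producing $m\le r/2+1$. The delicate point is precisely this: a naive proper-transform estimate loses an additive constant, and only the $n\mathbb{Z}/(n\mathbb{Z}+1)$ dichotomy of Lemma~\ref{multlem} recovers the sharp $+1$; the case $n=2$, where Lemma~\ref{mijlem}~(1) allows the multiplicity to rise by one, would have to be handled separately.
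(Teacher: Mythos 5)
Correct, and essentially the paper's own proof: you derive $t\ge(n-1)c$ from Proposition~\ref{tcprop} via $d_{i,j}\ge 1$ exactly as the paper does, split into the fiber case ($t=r$, $c=an$, giving $r\ge an(n-1)$) and the exceptional case ($t=m$, $c=an-1$, giving $m\ge(n-1)(an-1)$ and hence (3)), and settle (1) and (2) by playing these bounds against $m\le r/2+1$. The only divergence is expository: where the paper dispatches $m\le r/2+1$ by citing Lemmas~\ref{eltrlem} and~\ref{mslem}, you reconstruct the underlying non-increase of the multiplicities along the chain of infinitely near points via the mod-$n$ dichotomy of Lemma~\ref{multlem} (a correct filling-in of what the paper leaves implicit, including the observation that a jump $m^{(t+1)}=m^{(t)}+1$ would force a residue $2$ modulo $n$), and you rightly flag $n=2$ as the case needing separate care there.
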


\begin{proof}
Let $L$ be a $\widetilde{\varphi}$-vertical $(-an)$-curve in $\widetilde{R}$.
Since we have 
$(t+c)/n=\sum_{i=1}^l\sum_{j=1}^{i_{\max}}d_{i,j}\geq c$ by Proposition~\ref{tcprop}, 
we get $t\geq (n-1)c$.

If $L$ is the proper transform of a fiber of $\varphi$, then $t=r$ and $c=an$. 
Hence $r\geq an(n-1)$. Then
$$
g\geq (n-1)\left(\frac{an(n-1)}{2}-1\right)
$$
by \eqref{r}.

If $L$ is the proper transform of an exceptional curve $E$ appeared in 
 $\widetilde{\psi}$, then $m\geq (an-1)(n-1)$, where $m$ denotes the 
 multiplicity of the branch locus at the point obtained by contracting $E$.
Hence (3) follows.
Moreover, we have $m\leq r/2+1$ by Lemmas~\ref{eltrlem} and \ref{mslem} (1).
Then we get
$$
g\geq (n-1)(an^2-(a+1)n-1)
$$
from the above two inequalities and \eqref{r}.

By an easy computation, one can show
$$
(n-1)(an^2-(a+1)n-1)\geq (n-1)\left(\frac{an(n-1)}{2}-1\right)
$$
with equality sign holding if and only if $(a,n)=(1,3)$.
In sum, we get (1) and (2).
\end{proof}

When $a=1$, we get the following:

\begin{cor}\label{gcor}
$(1)$ If $g<(n-2)(n-1)(n+1)/2$, then any irreducible components of 
 $\widetilde{R}$ is $\widetilde{\varphi}$-horizontal.

\smallskip

\noindent
$(2)$ If $(n-2)(n-1)(n+1)/2\leq g<(n-1)(n^2-2n-1)$, then any 
 $\widetilde{\varphi}$-vertical $(-n)$-curve in $\widetilde{R}$ is the 
 proper transform of a fiber of $\varphi$.

\smallskip

\noindent
$(3)$ The multiplicity of any singular point of the branch locus of type $n\mathbb{Z}+1$ is 
 greater than or equal to $(n-1)^2$.
\end{cor}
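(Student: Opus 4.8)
The plan is to obtain all three assertions as the special case $a=1$ of the preceding Proposition, using that each of the bounds appearing there is monotone in $a$ so that the $a=1$ instance is the governing one. Throughout I would use the fact, recorded just before the Proposition, that a $\widetilde\varphi$-vertical irreducible component $L$ of $\widetilde R$ is a smooth rational curve with $\widetilde\theta^*L=nD$ for some $\widetilde f$-vertical $D\simeq\mathbb P^1$ in $\mathrm{Fix}(\widetilde\sigma)$, and that $L^2=-an$ where $-a=D^2$; since $L$ is a vertical curve with $L^2<0$, one has $a\geq 1$.

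For (1), I would note that the hypothesis $g<(n-2)(n-1)(n+1)/2$ is precisely $g<(n-1)(n(n-1)/2-1)$, the value of the Proposition's bound at $a=1$. Because $(n-1)(an(n-1)/2-1)$ is strictly increasing in $a$, this inequality implies $g<(n-1)(an(n-1)/2-1)$ for every $a\geq 1$, so Proposition (1) rules out a $\widetilde\varphi$-vertical $(-an)$-curve in $\widetilde R$ for each $a\geq 1$. Hence $\widetilde R$ has no vertical component at all, which is the assertion. Part (2) is then the verbatim $a=1$ case of Proposition (2): the displayed range is $(n-1)(n(n-1)/2-1)\leq g<(n-1)(n^2-2n-1)$, i.e.\ the range of Proposition (2) with $a=1$, so the conclusion about $(-n)$-curves being proper transforms of fibers is immediate.

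For (3), given a singular point $x$ of the branch locus with $m:=\mathrm{mult}_x(R)\in n\mathbb Z+1$, I would blow up $x$. By Lemma~\ref{multlem}(1) the exceptional curve $E$ is a component of the proper transform of $R$; as $E$ is $\varphi$-vertical and the resolution $\widetilde\psi$ consists only of blow-ups, its strict transform $L$ is a $\widetilde\varphi$-vertical component of $\widetilde R$, hence a $(-an)$-curve with $a\geq 1$. Proposition (3) then yields $[m/n]\geq a(n-1)-1\geq n-2$, whence $m=n[m/n]+1\geq n(n-2)+1=(n-1)^2$.

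The real content is entirely in the Proposition, so there is no serious obstacle; the only things to verify are the elementary identity $(n-1)(n(n-1)/2-1)=(n-2)(n-1)(n+1)/2$ together with $n^2-(a+1)n-1=n^2-2n-1$ at $a=1$, and, in each application, the observation that $a\geq 1$ (so that $a(n-1)-1\geq n-2$ and the bounds are minimized at $a=1$). If anything requires care it is confirming in (3) that the strict transform of the exceptional curve genuinely persists as a vertical component of $\widetilde R$, which follows since $\widetilde\psi$ never contracts curves.
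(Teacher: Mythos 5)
Your proposal is correct and is essentially the paper's own (implicit) argument: the paper deduces the corollary simply by specializing the preceding Proposition to $a=1$, and you supply exactly the routine details this requires --- the identity $(n-1)(n(n-1)/2-1)=(n-2)(n-1)(n+1)/2$, the monotonicity in $a$ together with the fact that every $\widetilde{\varphi}$-vertical component of $\widetilde{R}$ is a $(-an)$-curve with integral $a\ge 1$ (via $\widetilde{\theta}^*L=nD$), and, for (3), that $m\in n\mathbb{Z}+1$ forces $E\subset R_i$ by Lemma~\ref{multlem}(1) so that Proposition (3) applies and gives $m=n[m/n]+1\ge n(n-2)+1=(n-1)^2$.
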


\section{Slope equality, Horikawa index and the local signature}

Let $f\colon S\to B$ be a primitive cyclic covering fibration of type $(g,0,n)$.
Firstly, we introduce singularity indices.

\begin{defn}[Singularity index $\alpha$]\label{sinddef}
Let $k$ be a positive integer.
For $p\in B$, we consider all the singular points (including infinitely near 
 ones) of $R$ on the fiber $\Gamma_p$ of $\varphi\colon W\to B$ over $p$.
We let $\alpha_k(F_p)$ be the number of singular points of multiplicity 
 either $kn$ or $kn+1$ among them, and call it the {\em $k$-th singularity 
 index} of $F_p$, the fiber of $f\colon S\to B$ over $p$.
Clearly, we have $\alpha_k(F_p)=0$ except for a finite number of $p\in B$.
We put $\alpha_k=\sum_{p\in B}\alpha_k(F_p)$ and call it the {\em $k$-th 
 singularity index} of $f$.

We also define $0$-th singularity index $\alpha_0(F_p)$ as follows.
Let $D_1$ be the sum of all $\widetilde{\varphi}$-vertical $(-n)$-curves 
 contained in $\widetilde{R}$ and put $\widetilde{R}_0=\widetilde{R}-D_1$.
Then, $\alpha_0(F_p)$ is the ramification index of 
 $\widetilde{\varphi}|_{\widetilde{R}_0}\colon \widetilde{R}_0\to B$ over $p$, 
 that is, the ramification index of 
 $\widetilde{\varphi}|_{(\widetilde{R}_0)_h}\colon (\widetilde{R}_0)_h\to B$ 
 over $p$ minus the sum of the topological Euler number of irreducible 
 components of $(\widetilde{R}_0)_v$ over $p$.
Then $\alpha_0(F_p)=0$ except for a finite number of $p\in B$, and we have
$$
\sum_{p\in B}\alpha_0(F_p)=(K_{\widetilde{\varphi}}+\widetilde{R}_0)\widetilde{R}_0
$$
by definition.
We put $\alpha_0=\sum_{p\in B}\alpha_0(F_p)$ and call it the {\em $0$-th singularity index} of $f$.
\end{defn}

\begin{rem}
The singularity indices defined above are somewhat different from those 
 defined in \cite{pi1} for $n=2$, because all singular points are ``essential'' if $n\geq 3$.
One can check that the value of $\alpha_k(F_p)$ does not depend on the 
 choice of the relatively minimal model of $\widetilde{W}\to B$ 
 satisfying Lemma~\ref{eltrlem} by the same argument for $n=2$ in \cite{pi1}.
\end{rem}

Let $\varepsilon(F_p)$ be the number of $(-1)$-curves contained in $F_p$, 
and put $\varepsilon=\sum_{p\in B}\varepsilon(F_p)$.
This is no more than the number of blowing-ups appearing in 
$\rho\colon \widetilde{S}\to S$.

Now, we compute the numerical invariants of $f$ using singularity indices.
Recall that $R$ is numerically equivalent to $-rK_\varphi/2+M\Gamma$ for 
some half-integer $M$.
We express $M$ in terms of singularity indices by calculating 
$(K_{\widetilde{\varphi}}+\widetilde{R})\widetilde{R}$ in two ways.
From \eqref{kphi} and \eqref{delta}, we have
\begin{align*}
(K_{\widetilde{\varphi}}+\widetilde{R})\widetilde{R}=&\;
\left(\widetilde{\psi}^*(K_\varphi+R)+\sum_{i=1}^N
\left(1-n\left[\frac{m_i}{n}\right]\right)\mathbf{E}_i\right)
\left(\widetilde{\psi}^*R-n\left[\frac{m_i}{n}\right]\mathbf{E}_i\right) \\
=&\;(K_\varphi+R)R-\sum_{i=1}^N n\left[\frac{m_i}{n}\right]
\left(n\left[\frac{m_i}{n}\right]-1\right) \\
=&\;\left(\left(1-\frac{r}{2}\right)K_\varphi+M\Gamma\right)
\left(-\frac{r}{2}K_\varphi+M\Gamma\right)
-\sum_{k\ge 1}nk(nk-1)\alpha_k.
\end{align*}
and, thus,
\begin{equation}\label{2cal1}
(K_{\widetilde{\varphi}}+\widetilde{R})\widetilde{R}=
2(r-1)M-n\sum_{k\ge 1}k(nk-1)\alpha_k.
\end{equation}
On the other hand, we have
\begin{equation}\label{2cal2}
(K_{\widetilde{\varphi}}+\widetilde{R})\widetilde{R}=(K_{\widetilde{\varphi}}+\widetilde{R}_0)\widetilde{R}_0
+D_1(K_{\widetilde{\varphi}}+D_1)=\alpha_0-2\varepsilon.
\end{equation}
Hence,
\begin{equation}\label{eq:4.4}
M=\frac{1}{2(r-1)}\left(\alpha_0+n\sum_{k\ge 1}k(nk-1)\alpha_k-2\varepsilon\right)
\end{equation}
by \eqref{2cal1} and \eqref{2cal2}.

Next, we compute $K_f^2$ and $\chi_f$.
We have
\begin{align*}
\widetilde{\mathfrak{d}}^2=&\;\mathfrak{d}^2-\sum_{i=1}^N\left[\frac{m_i}{n}\right]^2=
\frac{2r}{n^2}M-\sum_{k\ge 1}k^2\alpha_k,\\
\widetilde{\mathfrak{d}}K_{\widetilde{\varphi}}=&\;\mathfrak{d} K_\varphi+\sum_{i=1}^N\left[\frac{m_i}{n}\right]
=-\frac{2M}{n}+\sum_{k\ge 1}k\alpha_k,\\
K_{\widetilde{\varphi}}^2=&\;K_\varphi^2-N=-\sum_{k\ge 1}\alpha_k.
\end{align*}
Thus, we get
\begin{align*}
K_{\widetilde{f}}^2=&\; -n\sum_{k\ge 1}\alpha_k
+2n(n-1)\left(-\frac{2M}{n}+\sum_{k\ge 1}k\alpha_k\right)
+n(n-1)^2\left(\frac{2rM}{n^2}-\sum_{k\ge 1}k^2\alpha_k\right)\\
=&\;\frac{2(n-1)((n-1)r-2n)}{n}M-n\sum_{k\ge 1}((n-1)k-1)^2\alpha_k
\end{align*}
and
\begin{align*}
\chi_{\widetilde{f}}=&\;\frac{1}{12}n(n-1)(2n-1)
\left(\frac{2rM}{n^2}-\sum_{k\ge 1}k^2\alpha_k\right)
+\frac{1}{4}n(n-1)\left(-\frac{2M}{n}+\sum_{k\ge 1}k\alpha_k\right)\\
=&\;\frac{n-1}{6n}(r(2n-1)-3n)M-\frac{n(n-1)}{12}\sum_{k\ge 1}((2n-1)k^2-3k)\alpha_k
\end{align*}
by \eqref{kftilde} and \eqref{chiftilde}.
Hence, substituting \eqref{eq:4.4}, we obtain
\begin{align*}
K_f^2=&\;\frac{n-1}{r-1}\left(\frac{(n-1)r-2n}{n}(\alpha_0-2\varepsilon)
+(n+1)\sum_{k\ge 1}k(-nk+r)\alpha_k\right)
-n\sum_{k\ge 1}\alpha_k+\varepsilon,\\
\chi_f=&\;\frac{n-1}{12(r-1)}\left(\frac{(2n-1)r-3n}{n}(\alpha_0-2\varepsilon)+
(n+1)\sum_{k\ge 1}k(-nk+r)\alpha_k\right).
\end{align*}
These give us
\begin{equation}\label{ef}
e_f=12\chi_f-K_f^2=(n-1)\alpha_0+n\sum_{k\ge 1}\alpha_k-(2n-1)\varepsilon
\end{equation}
by Noether's formula.
Furthermore, we have
\begin{align*}
 &\; K_f^2-\lambda_{g,0,n}\chi_f \\
=&\;\frac{n}{(2n-1)r-3n}\sum_{k\ge 1}((n+1)(n-1)(-nk^2+rk)-(2n-1)r+3n)\alpha_k+\varepsilon,
\end{align*}
where $\lambda_{g,0,n}$ is the rational number defined in \eqref{boundeq}.
In this expression, the coefficient of $\alpha_k$ is non-negative, since 
$r\geq 2n$.
In fact, the quadratic function $(n+1)(n-1)(-nk^2+rk)-(2n-1)r+3n$ in $k$ 
is monotonically increasing in the interval $[1, r/2n]$, and its value at $k=1$ is 
$n(n-2)(r-n-2)\geq 0$.

We put
\begin{equation}\label{Horeq}
\mathrm{Ind}(F_p)=n\sum_{k\ge 1}\left(\frac{(n+1)(n-1)(r-nk)k}{(2n-1)r-3n}-1\right)\alpha_k(F_p)+
\varepsilon(F_p)
\end{equation}
Let $\mathcal{A}_{g,0,n}$ be the set of all fiber germs of primitive 
cyclic covering fibrations of type $(g,0,n)$.
Then \eqref{Horeq} defines a well-defined function $\mathrm{Ind}: 
\mathcal{A}_{g,0,n}\to \mathbb{Q}_{\geq 0}$ called the Horikawa index 
(cf.\  \cite{ak}).
Note that we have $\mathrm{Ind}(F_p)=0$ 
when either $r=n\geq 3$ (in this case $R$ is smooth) or $p\in B$ is general. We have shown the following:

\begin{thm}\label{slopeeq}
Let $f\colon S\to B$ be a primitive cyclic covering fibration of type $(g,0,n)$.
Then
$$
K_f^2=\lambda_{g,0,n}\chi_f+\sum_{p\in B}\mathrm{Ind}(F_p),
$$
where $\lambda_{g,0,n}$ is the rational number in $\eqref{boundeq}$ 
 and $\mathrm{Ind}\colon \mathcal{A}_{g,0,n}\to \mathbb{Q}_{\geq 0}$ denotes the 
 Horikawa index defined by $\eqref{Horeq}$.
\end{thm}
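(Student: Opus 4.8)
The plan is to assemble the slope equality directly from the explicit expressions for $K_f^2$ and $\chi_f$ in terms of the singularity indices that are produced in the computation preceding the statement; almost all of the labor is already contained there, so the proof is chiefly one of bookkeeping together with a single positivity check. First I would record, via the blow-up formulas \eqref{kphi} and \eqref{delta} coming from Lemma~\ref{multlem}, the three intersection numbers $\widetilde{\mathfrak{d}}^2$, $\widetilde{\mathfrak{d}}K_{\widetilde{\varphi}}$ and $K_{\widetilde{\varphi}}^2$ as linear combinations of the half-integer $M$ and the indices $\alpha_k$, using that $R$ is numerically $-rK_\varphi/2+M\Gamma$ on the ruled surface $W$. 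Feeding these into the cyclic covering formulas \eqref{kftilde} and \eqref{chiftilde} then expresses $K_{\widetilde{f}}^2$ and $\chi_{\widetilde{f}}$ in terms of $M$ and the $\alpha_k$.

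The one genuinely necessary auxiliary step is to eliminate $M$. For this I would compute $(K_{\widetilde{\varphi}}+\widetilde{R})\widetilde{R}$ in two independent ways: once by expanding along the exceptional divisors to obtain \eqref{2cal1}, and once by splitting $\widetilde{R}=\widetilde{R}_0+D_1$ and invoking the definition of $\alpha_0$ together with the fact that the components of $D_1$ are $(-n)$-curves, to obtain \eqref{2cal2}. Equating the two yields the formula \eqref{eq:4.4} for $M$. Substituting \eqref{eq:4.4} back, and using $K_{\widetilde{f}}^2=K_f^2-\varepsilon$ since $\varepsilon$ counts the blow-ups appearing in $\rho$, produces the closed forms for $K_f^2$ and $\chi_f$ displayed above.

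With these in hand I would form $K_f^2-\lambda_{g,0,n}\chi_f$ and simplify; using the relation between $r$ and $g$ from \eqref{r}, the $\alpha_0$ and $\varepsilon$ contributions reorganize so that the difference becomes $\sum_{k\ge1}(\text{coefficient})\,\alpha_k+\varepsilon$, with no $\alpha_0$ term surviving. Since each index decomposes as $\alpha_k=\sum_{p\in B}\alpha_k(F_p)$ and $\varepsilon=\sum_{p\in B}\varepsilon(F_p)$, defining $\mathrm{Ind}(F_p)$ fiber-by-fiber by \eqref{Horeq} and summing over $p$ recovers exactly $K_f^2-\lambda_{g,0,n}\chi_f$, which is the asserted identity.

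It remains to justify that $\mathrm{Ind}$ genuinely lands in $\mathbb{Q}_{\ge0}$ and vanishes generically, and this positivity check is where I expect the only real, if modest, obstacle to lie. I would show that the coefficient of each $\alpha_k(F_p)$ is non-negative by treating $(n+1)(n-1)(-nk^2+rk)-(2n-1)r+3n$ as a downward quadratic in $k$ with vertex at $k=r/2n$: it is increasing on $[1,r/2n]$, the range in which $k$ lives, and its value at $k=1$ equals $n(n-2)(r-n-2)$, which is $\ge0$ because $r\ge 2n$ by Remark~\ref{3.2} and $n\ge2$. For a general $p$, the branch curve $R$ meets $\Gamma_p$ transversally in $r$ distinct smooth points, so all $\alpha_k(F_p)$ with $k\ge1$ and $\varepsilon(F_p)$ vanish, giving $\mathrm{Ind}(F_p)=0$; the same holds when $r=n\ge3$, where $R$ is smooth. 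Finally, independence of $\mathrm{Ind}(F_p)$ from the chosen relatively minimal model satisfying Lemma~\ref{eltrlem} follows as in the remark after Definition~\ref{sinddef}, so that $\mathrm{Ind}$ is a well-defined function on the set $\mathcal{A}_{g,0,n}$ of fiber germs.
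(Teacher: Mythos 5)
Your proposal is correct and follows essentially the same route as the paper: the paper's proof of Theorem~\ref{slopeeq} is precisely the computation in \S4 that you describe, namely eliminating $M$ via the two evaluations \eqref{2cal1} and \eqref{2cal2} of $(K_{\widetilde{\varphi}}+\widetilde{R})\widetilde{R}$, substituting \eqref{eq:4.4} into the expressions for $K_{\widetilde f}^2$ and $\chi_{\widetilde f}$ together with $K_{\widetilde f}^2=K_f^2-\varepsilon$, and then checking non-negativity of the coefficient of $\alpha_k$ by the same monotonicity argument on $[1,r/2n]$ with value $n(n-2)(r-n-2)\ge 0$ at $k=1$. Your closing remarks on generic vanishing and on independence of the choice of relatively minimal model also match the paper's treatment, so there is nothing to add.
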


\begin{rem}
As we saw in Lemma~\ref{gon}, the gonality of $f$ is $n$ when $r\geq 2n$.
Therefore, the lower bound of the slope of $n$-gonal fibrations cannot 
 exceed 
$$
\lambda_{g,0,n}=\frac{12(n-1)}{2n-1}\left(1-\frac{n(n+1)}{2(2n-1)(g-1)+n(n+1)}\right).
$$
When $n=2$, we have $\lambda_{g,0,2}=4-4/g$ and, therefore, the above theorem recovers the slope 
 equality for hyperelliptic fibrations.
When $n=3$, we have $\lambda_{g,0,3}=24(g-1)/(5g+1)$ which coincides with the lower bound of the slope of semi-stable trigonal 
 fibrations in \cite{stankova}.
This is expected to hold also for unstable 
 ones (cf.\ \cite[p. 20]{trigonal}) and ours serves a new evidence for that.
When $n=4$, we have $\lambda_{g,0,4}=36(g-1)/(7g+3)$ which is strictly 
 greater than the bound $24(g-1)/(5g+3)$ given in \cite{beorchia-zucconi} for semi-stable 
 (non-factorized) tetragonal fibrations.
\end{rem}

Now, we state a topological application of the slope equality.
For an oriented compact real $4$-dimensional manifold $X$, 
the signature ${\rm Sign}(X)$ is defined to be the number of positive eigenvalues minus 
the number of negative eigenvalues of the intersection form on $H^2(X)$. 
Using the singularity indices, we observe the local concentration of ${\rm Sign}(S)$ on a finite number of fiber germs.

\begin{cor} \label{signcor}
Let $f\colon S\to B$ be a primitive cyclic covering fibration of type $(g,0,n)$. Then,
\begin{equation*}
{\rm Sign}(S)=\sum_{p\in B}\sigma(F_p),
\end{equation*}
where $\sigma\colon \mathcal{A}_{g,0,n}\rightarrow \mathbb{Q}$ is defined by
\begin{eqnarray*}
\sigma(F_p)&=&\frac{-(n-1)(n+1)r}{3n(r-1)}\alpha_0(F_p)+\sum_{k\ge 1}\left(\frac{(n-1)(n+1)(-nk^2+rk)}{3(r-1)}-n\right)\alpha_k(F_p)\\
&&+\frac{1}{3n(r-1)}((n+2)(2n-1)r-3n)\varepsilon(F_p),
\end{eqnarray*}
which is called the local signature of $F_p$.
\end{cor}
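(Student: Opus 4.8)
The plan is to combine the signature theorem with the explicit formulas for $K_f^2$ and $\chi_f$ already obtained in terms of the singularity indices. By the index theorem of Hirzebruch, the signature of the smooth projective surface $S$ is governed by its characteristic numbers via $\mathrm{Sign}(S) = \frac{1}{3}(K_S^2 - 2e(S))$. First I would rewrite this in terms of the relative invariants: using $K_S^2 = K_f^2 + 8(g-1)(b-1)$ and $e(S) = e_f + 4(g-1)(b-1)$, the base-dependent terms cancel, leaving $\mathrm{Sign}(S) = \frac{1}{3}(K_f^2 - 2e_f)$. This reduces the problem to expressing $K_f^2 - 2e_f$ as a sum of local contributions over $p \in B$.

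Next I would substitute the formulas already established in this section. The expression for $K_f^2$ is given just before Theorem~\ref{slopeeq}, and $e_f$ is given explicitly in \eqref{ef} as $e_f = (n-1)\alpha_0 + n\sum_{k\ge 1}\alpha_k - (2n-1)\varepsilon$. Since every quantity appearing---$\alpha_0$, the $\alpha_k$, and $\varepsilon$---is by construction a sum over $p \in B$ of the corresponding local quantities $\alpha_0(F_p)$, $\alpha_k(F_p)$, $\varepsilon(F_p)$ (each vanishing for all but finitely many $p$, as noted in Definition~\ref{sinddef}), the sum $K_f^2 - 2e_f$ decomposes termwise into a sum over $p \in B$. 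I would then collect the coefficients of $\alpha_0(F_p)$, $\alpha_k(F_p)$, and $\varepsilon(F_p)$ separately to read off the claimed formula for $\sigma(F_p)$.

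Concretely, assembling $\frac{1}{3}(K_f^2 - 2e_f)$ and grouping, the coefficient of $\alpha_0(F_p)$ comes from the $\frac{n-1}{r-1}\cdot\frac{(n-1)r-2n}{n}$ term in $K_f^2$ together with the $-2(n-1)$ from $-2e_f$; after clearing the common denominator $3n(r-1)$ this should collapse to $\frac{-(n-1)(n+1)r}{3n(r-1)}$. Similarly, the coefficient of $\alpha_k(F_p)$ combines the quadratic-in-$k$ term $\frac{(n-1)(n+1)}{r-1}(-nk^2+rk)$ from $K_f^2$, the $-n$ from $K_f^2$, and the $-2n$ from $-2e_f$, yielding $\frac{(n-1)(n+1)(-nk^2+rk)}{3(r-1)} - n$; and the coefficient of $\varepsilon(F_p)$ collects the remaining terms. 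The main obstacle will be purely computational: verifying that these coefficient combinations simplify exactly to the stated rational functions of $n$ and $r$, keeping careful track of the factor $\frac{n-1}{r-1}$ and the denominator $(2n-1)r - 3n$ hidden inside the $K_f^2$ formula. Since the identity $\mathrm{Sign}(S) = \frac{1}{3}(K_f^2 - 2e_f)$ holds globally and each ingredient is manifestly a finite sum of local terms, the localization is automatic once the algebra is checked; no further geometric input beyond the already-proven slope equality machinery is needed.
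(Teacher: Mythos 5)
Your proposal is correct and is essentially the paper's argument: the paper likewise starts from the index theorem in the form $\mathrm{Sign}(S)=K_f^2-8\chi_f$ (equivalent to your $\frac{1}{3}(K_f^2-2e_f)$ via Noether's formula) and then substitutes the explicit singularity-index expressions from \S 4, merely organizing the algebra through the Horikawa index $\mathrm{Ind}(F_p)$ and $e_f$ instead of collecting coefficients directly as you do. Your coefficient computations check out (e.g.\ the $\varepsilon$-coefficient indeed simplifies to $((n+2)(2n-1)r-3n)/3n(r-1)$), so the two routes differ only in bookkeeping, not in substance.
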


\begin{proof}
Once we have the Horikawa index, we can define the local signature 
 according to \cite{ak} as follows.
By the index theorem (cf.\ \cite[p.~126]{pag}), we have 
$$
\mathrm{Sign}(S)=\sum_{p+q\equiv 0 (\mathrm{mod} 2)}h^{p,q}(S)=K_f^2-8\chi_f.
$$
Put $\lambda=\lambda_{g,0,n}$. Then $\lambda<12$.
From the slope equality $K_f^2=\lambda \chi_f+\mathrm{Ind}$, where $\mathrm{Ind}=\sum_{p\in B}\mathrm{Ind}(F_p)$, and 
 Noether's formula $12\chi_f=K_f^2+e_f$, we get
$$
K_f^2=\frac{12}{12-\lambda}\mathrm{Ind}+\frac{\lambda}{12-\lambda}e_f,\quad
\chi_f=\frac{1}{12-\lambda}\mathrm{Ind}+\frac{1}{12-\lambda}e_f.
$$
Then
$$
\mathrm{Sign}(S)=\frac{4}{12-\lambda}\mathrm{Ind}-\frac{8-\lambda}{12-\lambda}e_f.
$$
Substituting \eqref{ef} and \eqref{Horeq}, the desired equality follows by 
 the definition of $\sigma(F_p)$.
\end{proof}

\section{Upper bound of the slope}

Let $f\colon S \to B$ be a primitive cyclic covering fibration of type $(g,0,n)$.
In this section, we give an upper bound of the slope of $f$.
Let the situation be as in the previous section.

For a vertical divisor $T$ and $p\in B$, we denote by $T(p)$ the 
greatest subdivisor of $T$ consisting of components of the fiber over $p$.
Then $T=\sum_{p\in B}T(p)$.
We consider a family $\{L^{i}\}_i$ of vertical irreducible curves in 
$\widetilde{R}$ over $p$ satisfying:

\smallskip

(i) $L^{1}$ is the proper transform of the fiber $\Gamma_p$ or a 
$(-1)$-curve $E^{1}$.

\smallskip

(ii) For $i\geq 2$, $L^{i}$ is the proper transform of an exceptional 
$(-1)$-curve $E^{i}$ that contracts to a point $x^{i}$ on $C^{k}$ 
or its proper transform for some $k<i$, where we let $C^{1}$ to be 
$E^{1}$ or $\Gamma_p$ according to whether $L^{1}$ is the proper 
transform of which curve, and $C^{j}=E^{j}$ for $j<i$.

\smallskip

(iii) $\{L^{i}\}_i$ is the largest among those satisfying (i) and (ii).

\medskip

The set of all vertical irreducible curves in $\widetilde{R}$ over $p$ 
is decomposed into the disjoint union of such families uniquely.
We denote it as 
$$
\widetilde{R}_v(p)=D^1(p)+\cdots+D^{\eta_p}(p),\quad D^t(p)=\sum_{k\ge 1}L^{t,k},
$$
where $\eta_p$ denotes the number of the decomposition and $\{L^{t,i}\}_i$ satisfies (i), (ii), (iii).
Let $C^{t,k}$ be the exceptional $(-1)$-curve or the fiber $\Gamma_p$ the proper transform of which is $L^{t,k}$.

\begin{defn}
Let $j^t_{a}(F_p)$ be the number of irreducible curves with self-intersection number $-an$ contained in $D^t(p)$.
Put $j_{a}(F_p):=\sum_{t=1}^{\eta_p}j^{t}_{a}(F_p)$, $j^{t}(F_p):=\sum_{a\ge 1}j^{t}_{a}(F_p)$ and $j(F_p):=\sum_{t=1}^{\eta_p}j^{t}(F_p)$..

Let $\alpha_0^{+}(F_p)$ be the ramification index of 
$\widetilde{\varphi}:\widetilde{R}_h\to B$ over $p$.
It is clear that $\varepsilon(F_p)=j_{1}(F_p)$ and $\alpha(F_p)=\alpha_0^{+}(F_p)
-2\sum_{a\ge 2}j_{a}(F_p)$.

Let $\iota^t(F_p)$ and $\kappa^t(F_p)$ be the number of singular points 
 over $p$ of types $n\mathbb{Z}$ and $n\mathbb{Z}+1$, respectively, at 
 which two proper transforms of $C^{t,k}$'s meet. 
We put $\iota(F_p)=\sum_{t=1}^{\eta_p}\iota^t(F_p)$ and $\kappa(F_p)=\sum_{t=1}^{\eta_p}\kappa^t(F_p)$.
\end{defn}

\begin{lem} \label{jplem}
The following hold:

\smallskip

\noindent
$(1)$ $\iota(F_p)=j(F_p)-\eta_p$.

\smallskip

\noindent
$(2)$ $\alpha^{+}_0(F_p)\ge (n-2)(j(F_p)-\eta_p+2\kappa(F_p))$.

\smallskip

\noindent
$(3)$  $\sum_{k\ge 1}\alpha_k(F_p)\ge \sum_{a\ge 1}(an-2)j_{a}(F_p)+2\eta_p-\kappa(F_p)$.
\end{lem}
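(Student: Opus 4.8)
The plan is to read all three inequalities off the combinatorics of the $\widetilde{\varphi}$-vertical configuration $\widetilde{R}_v(p)=\sum_{t=1}^{\eta_p}D^t(p)$ over a fixed $p\in B$, working one family $D^t(p)$ at a time and summing at the end. The two tools I would use constantly are Lemma~\ref{multlem} (every blow-up center carries branch multiplicity $\equiv 0$ or $1\pmod n$, with the exceptional curve lying in the branch locus exactly in the second case) and the multiplicity bookkeeping of Lemmas~\ref{mslem} and \ref{mijlem} together with the local branch analysis preceding Lemma~\ref{mslem}.

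For $(1)$ I would argue that within each family $\iota^t(F_p)=j^t(F_p)-1$. Since $h=0$, each fiber of $\widetilde{\varphi}$ is a tree of rational curves. The curves $L^{t,1},\dots,L^{t,j^t}$ are pairwise disjoint in the smooth surface-branch $\widetilde{R}$, so they can only be joined to one another through the exceptional curves created at their mutual crossing points. By Lemma~\ref{multlem}, a crossing of type $n\mathbb{Z}$ produces an exceptional curve \emph{not} contained in $\widetilde{R}$, which acts as a single bridge joining two of the $L^{t,k}$, whereas a crossing of type $n\mathbb{Z}+1$ produces an exceptional curve lying in $\widetilde{R}$, hence itself one of the family curves. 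Thus the $j^t$ family curves together with the $\iota^t$ bridging curves form a connected acyclic subgraph of the fiber whose edges are exactly the bridges, giving $\iota^t=j^t-1$; summing over $t$ yields $\iota(F_p)=j(F_p)-\eta_p$.

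For $(2)$ I would run a local computation of the multiplicity of the horizontal branch $R_h$ at each crossing. At a crossing of two family curves the two vertical branches already contribute local multiplicity $2$ to $R$, so by Lemma~\ref{multlem} the multiplicity of $R_h$ there is $\equiv -2\pmod n$ at a type $n\mathbb{Z}$ crossing and $\equiv -1\pmod n$ at a type $n\mathbb{Z}+1$ crossing; in particular it is at least $n-2$. The corresponding branches of $R_h$ are tangent to the relevant vertical curve (the fiber direction), so each contributes at least $1$ to the ramification index of $\widetilde{\varphi}|_{\widetilde{R}_h}$ over $p$; this gives at least $n-2$ per type $n\mathbb{Z}$ crossing. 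For a type $n\mathbb{Z}+1$ crossing the spawned $\widetilde{R}$-curve creates a second nearby crossing carrying again $R_h$-multiplicity $\ge n-2$, so such points contribute $2(n-2)$. Summing, $\alpha_0^+(F_p)\ge (n-2)(\iota(F_p)+2\kappa(F_p))$, which by $(1)$ is the assertion.

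For $(3)$ I would count blow-up centers. Each $(-an)$-curve $L^{t,k}$ is obtained from its curve $C^{t,k}$ by $c=an$ (fiber) or $c=an-1$ ($(-1)$-curve) blow-ups at singular points, by Lemma~\ref{mslem}$(5)$. Since $\sum_{k\ge1}\alpha_k(F_p)$ counts all singular points over $p$ \emph{once}, while a crossing lies on two resolution chains, I would sum $\sum_k c_k$ over all family curves, subtract the shared crossing points counted by $\iota(F_p)+\kappa(F_p)$, and correct for the $\eta_p$ family roots; using $(1)$ to substitute $\iota=j-\eta_p$ then collapses the estimate to $\sum_{a\ge1}(an-2)j_a(F_p)+2\eta_p-\kappa(F_p)$. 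The main obstacle throughout is precisely this bookkeeping at the crossing points: avoiding double counting, correctly distributing the shared centers between the two incident curves, handling tangential (rather than transverse) crossings and infinitely near centers, and pinning down the exact constants $2\eta_p-\kappa$ coming from the roots and from the type $n\mathbb{Z}+1$ crossings; likewise in $(2)$ the delicate point is converting the lower bound on the local multiplicity of $R_h$ into a genuine contribution to the ramification index.
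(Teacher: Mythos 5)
Your overall route is the paper's own. For (1) the paper likewise forms, for each family $D^t(p)$, the graph with one vertex per curve $L^{t,k}$ and one edge per type-$n\mathbb{Z}$ crossing, notes it is connected and has no loops, and concludes $\iota^t(F_p)=j^t(F_p)-1$; your justification of acyclicity via the tree structure of the genus-$0$ fibers is exactly what the paper's ``no loops'' remark compresses. For (3) the paper likewise counts $an-1$ (resp.\ $an$) blow-ups per $(-an)$-curve, subtracts the shared centers $\iota^t(F_p)+\kappa^t(F_p)$ once, adds the root correction, and substitutes (1) to get $\sum_{a\ge 1}(an-2)j^t_a(F_p)+2-\kappa^t(F_p)$ per family. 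These two parts of your sketch are essentially complete.

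The genuine gap is in (2), at precisely the point you flag but do not resolve: the coefficient $2$ on $\kappa(F_p)$. At a type-$n\mathbb{Z}+1$ crossing of two $C^{t,k}$'s, the spawned exceptional curve lies in the branch locus and its center lies on a family curve, so by the maximality condition (iii) it is itself a member of the \emph{same} family $D^t(p)$; consequently the two ``nearby crossings'' where it meets the proper transforms of its parents are again crossings of two $C^{t,\bullet}$'s, hence are already counted in $\iota(F_p)$ or $\kappa(F_p)$ and carry their own $(n-2)$ allocation. Your scheme thus assigns the same horizontal branches twice, and made honest it only yields roughly $(n-2)(\iota(F_p)+\kappa(F_p))$ rather than $(n-2)(\iota(F_p)+2\kappa(F_p))$. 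Moreover, tangency is not the operative mechanism: a branch of $\widetilde{R}_h$ through such a point ramifies because, after blowing up, it meets a fiber component of multiplicity $m\ge 2$, contributing $m-1$ to the ramification index. The paper's proof exploits exactly this weight: it first bounds $\alpha_0^{+}(F_p)\ge \sum_i (m_i-1)\widetilde{R}_h G_i$ over the components of $\widetilde{\Gamma}_p$, isolates the exceptional curve $E^{t,k}$ over each type-$n\mathbb{Z}$ crossing, performs a reduction so that $E^{t,k}$ carries no further singular points (replacing it by deeper exceptional curves at type-$n\mathbb{Z}$ points, and discarding type-$n\mathbb{Z}+1$ points because the spawned root has fiber multiplicity at least $m^{t,k}$), obtains $\widetilde{R}_h\widehat{E}^{t,k}\ge n-2$, and then proves the multiplicity inequality $\sum_{k=1}^{\iota^t(F_p)}m^{t,k}\ge 2\iota^t(F_p)+2\kappa^t(F_p)$. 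The factor $2\kappa$ thus enters through amplification of the fiber multiplicities $m^{t,k}$ along the chain caused by the $\kappa$-points, not through additional branch points; without this weighted count, or an equivalent disjointness argument for your branch allocations, step (2) does not close.
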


\begin{proof}
We consider the following graph $\mathbf{G}^t$:
The vertex set $V(\mathbf{G}^t)$ is defined by the symbol set $\{v^{t,k}\}_{k=1}^{j^t(F_p)}$.
The edge set $E(\mathbf{G}^t)$ is defined by the symbol set $\{e_x\}_{x}$, where $x$ moves among all singularities contributing to $\iota^{t}(F_p)$. 
If $C^{t,k}$ or a proper transform of it meets that of $C^{t,k'}$ at a singularity $x$ of type $n\mathbb{Z}$, the edge $e_x$ connects $v^{t,k}$ and $v^{t,k'}$.
By the definition of the decomposition $\widetilde{R}_v(p)=D^1(p)+\cdots+D^{\eta_p}(p)$, the graph $\mathbf{G}^t$ is connected for any $t=1,\dots, \eta_p$.
Since this graph $\mathbf{G}^t$ has no loops, we have $\iota^{t}(F_p)=j^{t}(F_p)-1$.
Thus, we get (1) by summing it up for $t$.

When $L^{t,k}$ is a $(-an)$-curve, $L^{t,k}$ is obtained by blowing 
 $C^{t,k}$ up 
$an-1$ (or $an$ when $k=1$ and 
 $C^{t,1}=\Gamma_p$) times.
Thus, disregarding overlaps, $D^t(p)$ is obtained by blowing up 
$$
\sum_{a\ge 1}(an-1)j^t_a(F_p) \ (\text{or }\sum_{a\ge 1}(an-1)j^t_a(F_p)+1)
$$
times.
Thus, the number of singular points needed to obtain $D^t(p)$ can 
 be expressed as 
$$
\sum_{a\ge 1}(an-1) j^t_a(F_p)+1-\iota^t(F_p)-\kappa^t(F_p)
=\sum_{a\ge 1}(an-2) j^t_a(F_p)+2-\kappa^t(F_p).
$$
Then, the number of singular points needed to obtain 
 $\widetilde{R}_v(p)$ is
$$
\sum_{t=1}^{\eta_p}\left(\sum_{a\ge 1}(an-2)j^t_a(F_p)+2-\kappa^t(F_p)\right)
=\sum_{a\ge 1}(an-2)j_a(F_p)+2\eta_p-\kappa(F_p).
$$
This give us (3).

It remains to show (2). 
Let $\widetilde{\Gamma}_p=\sum_{i} m_iG_i$ be the irreducible decomposition. Then we have
\begin{align*}
\alpha_0^{+}(F_p)=&\;r-\#(\mathrm{Supp}(\widetilde{R}_h)\cap \mathrm{Supp}(\widetilde{\Gamma}_p)) \\
=&\; \sum_{i} m_i\widetilde{R}_hG_i-\#(\mathrm{Supp}(\widetilde{R}_h)\cap \mathrm{Supp}(\cup_i G_i)) \\
\ge& \sum_{i} (m_i-1)\widetilde{R}_hG_i.
\end{align*}
Let $x^{t,1},\dots, x^{t,\iota^t(F_p)}$ be all singular points over $p$ of type $n\mathbb{Z}$ at 
 which two proper transforms of $C^{t,k}$'s meet. Let $E^{t,k}$ be the exceptional curve obtained by blowing up at $x^{t,k}$ and $m^{t,k}$ the multiplicity of the fiber over $p$ along $E^{t,k}$. 
Let us estimate $\sum_{t=1}^{\eta_p}\sum_{k=1}^{\iota^t(F_p)} (m^{t,k}-1)\widetilde{R}_h\widehat{E}^{t,k}.$ 
If there exists a singular point of type $n\mathbb{Z}$ on $E^{t,k}$, we replace $E^{t,k}$ to the exceptional curve $E$ obtained by blowing up at this point. Repeating this procedure, we may assume that there exist no singular points of type $n\mathbb{Z}$ on $E^{t,k}$. If there exists  a singular point of type $n\mathbb{Z}+1$ on 
 $E^{t,k}$, the proper transform of the exceptional $(-1)$-curve obtained by blowing 
 up at this point belongs to 
 other $D^u(p)$ and becomes $L^{u,1}$ in $D^u(p)$.
Since the multiplicity of $\widetilde{\Gamma}_p$ along it is not less 
 than $m^{t,k}>1$, we do not have to consider this situation. 
Thus, we may assume that there exist no singularities on $E^{t,k}$ and we have $\widetilde{R}_h\widehat{E}^{t,k}\ge n-2$.
On the other hand, one sees that $\sum_{k=1}^{\iota^t(F_p)}m^{t,k}\ge 2\iota^t(F_p)+2\kappa^t(F_p)$. Hence, we have
\begin{align*}
\sum_{i} (m_i-1)\widetilde{R}_hG_i \ge& \sum_{t=1}^{\eta_p}\sum_{k=1}^{\iota^t(F_p)} (m^{t,k}-1)\widetilde{R}_h\widehat{E}^{t,k}      \\
 \ge& (n-2)\sum_{t=1}^{\eta_p}\sum_{k=1}^{\iota^t(F_p)} (m^{t,k}-1)  \\
 \ge& (n-2)\sum_{t=1}^{\eta_p} (\iota^t(F_p)+2\kappa^t(F_p))  \\
 =&\; (n-2)(\iota(F_p)+2\kappa(F_p)).
\end{align*}
Since $\iota(F_p)=j(F_p)-\eta_p$, we get (2).
\end{proof}

Using Lemma~\ref{jplem}, we give an upper bound of the slope:

\begin{thm}\label{upperthm}
Let $f\colon S\to B$ be a primitive cyclic covering fibration of type $(g,0,n)$ and assume $n\ge 4$.
Put $r:=\displaystyle{\frac{2g}{n-1}+2}$,
$\delta:=\left\{\begin{array}{l}
0, \ \text{if}\ r\in 2n\mathbb{Z}, \\
1, \ \text{if}\ r\not\in 2n\mathbb{Z}. \\
\end{array}
\right.$
Then, the following hold:

\smallskip

\noindent
$(1)$ If $n\le r< n(n-1)$, then
$$
K_f^2\leq \left(12-\frac{48n^2(r-1)}{(n-1)(n+1)(r^2-\delta n^2)}\right)\chi_f.
$$

\smallskip

\noindent
$(2)$ If $r\ge n(n-1)$, then
$$
K_f^2\leq \left(12-\frac{48n(n-1)(r-1)}{n(n+1)r^2-8(2n-1)r+24n-\delta n^3(n+1)}\right)\chi_f.
$$
\end{thm}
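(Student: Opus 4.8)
The plan is to turn the asserted upper bound into a lower bound for $e_f$ and then to localize it on fiber germs. By Noether's formula $12\chi_f=K_f^2+e_f$, the inequality $K_f^2\le(12-c)\chi_f$ is equivalent to $e_f\ge c\,\chi_f$, where $c$ denotes the correction term in the statement. By \eqref{ef} and the expressions for $K_f^2$ and $\chi_f$ established in Section~4, both $e_f$ and $\chi_f$ are sums $\sum_{p\in B}e_f(F_p)$ and $\sum_{p\in B}\chi_f(F_p)$ of local contributions, each an explicit linear combination of the singularity indices $\alpha_0(F_p)$, $\{\alpha_k(F_p)\}_{k\ge1}$ and $\varepsilon(F_p)$. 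Hence it suffices to prove the local inequality $e_f(F_p)\ge c\,\chi_f(F_p)$ for every germ and to sum over $p$.

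Before optimizing I would record the constraints. For a singular fiber we have $r\ge 2n$ by Remark~\ref{3.2}, and Lemma~\ref{eltrlem} together with Lemma~\ref{mslem} bounds the multiplicity of any singular point of $R_h$ by $r/2+1$, so $\alpha_k(F_p)=0$ unless $k\le r/2n$. Since $r$ is a multiple of $n$, the largest admissible value of the weight $k(r-nk)$ — which is the $\chi_f$-coefficient of $\alpha_k(F_p)$ up to a fixed factor — equals $(r^2-\delta n^2)/(4n)$, attained at $k=r/2n$ when $\delta=0$ and at $k=(r-n)/2n$ when $\delta=1$; this is precisely where the parameter $\delta$ enters. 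The refined invariants $j_a(F_p),\eta_p,\iota(F_p),\kappa(F_p),\alpha_0^+(F_p)$, the identities $\varepsilon(F_p)=j_1(F_p)$ and $\alpha_0(F_p)=\alpha_0^+(F_p)-2\sum_{a\ge2}j_a(F_p)$, and the three inequalities of Lemma~\ref{jplem} are then used to control the negative contribution $-(2n-1)\varepsilon(F_p)$ in $e_f(F_p)$. The two regimes of the theorem reflect whether $\widetilde\varphi$-vertical $(-an)$-curves can occur in $\widetilde R$: by Corollary~\ref{gcor}~(1) they cannot when $r<n(n-1)$, but they can when $r\ge n(n-1)$.

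For part~(1), where $r<n(n-1)$, Corollary~\ref{gcor}~(1) forces $\widetilde R$ to be purely horizontal, so $\varepsilon(F_p)=0$ and every $j_a(F_p)$ vanishes. Then $e_f(F_p)$ and $\chi_f(F_p)$ are non-negative combinations of $\alpha_0(F_p)$ and the $\alpha_k(F_p)$, all $\chi_f$-coefficients being strictly positive because $k\le r/2n<r/n$. The local inequality thus reduces to comparing, index by index, the ratio of the $e_f$- to the $\chi_f$-coefficient; this ratio is smallest for the heaviest admissible $\alpha_k(F_p)$, and the computation above shows the minimum equals $c=48n^2(r-1)/\bigl((n-1)(n+1)(r^2-\delta n^2)\bigr)$, as claimed.

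For part~(2), where $r\ge n(n-1)$, vertical $(-n)$-curves are allowed and the term $-(2n-1)\varepsilon(F_p)$ can no longer be dropped. Here I would substitute the inequalities of Lemma~\ref{jplem} into $e_f(F_p)$ so as to dominate the negative $\varepsilon$-contribution by the positive contributions of $\alpha_0^+$ and $\sum_k\alpha_k$ — for $n\ge4$ the governing quantity $n^2-4n+1$ is positive — and then optimize over the distribution of the $r$ branch points along the configuration. The extremal germ is a single $\widetilde\varphi$-vertical $(-n)$-curve arising as the proper transform of a fiber and carrying all $r$ branch points in the heaviest admissible singularity; tracking its contributions to $\alpha_0$, $\{\alpha_k\}$ and $\varepsilon$ produces the extra terms $-8(2n-1)r+24n-\delta n^3(n+1)$ in the denominator. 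The main obstacle is exactly this last optimization: because the $\varepsilon$-coefficient is negative, the minimizing germ is no longer found by isolating one index, and one must simultaneously respect all three inequalities of Lemma~\ref{jplem}, the non-negativity of every index, the multiplicity bound, and the divisibility recorded by $\delta$; checking that a single $(-n)$-curve, rather than some configuration spreading branch points over several curves or singular points, genuinely realizes the extremum is the delicate combinatorial core of the proof.
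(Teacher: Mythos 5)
Your overall frame is sound and, for part (1), essentially reproduces the paper: reduce via Noether to $e_f\ge\mu\chi_f$, localize on fiber germs, kill all vertical contributions by Corollary~\ref{gcor}~(1) when $r<n(n-1)$, and control the $\alpha_k$-coefficients by the discrete maximum $k(r-nk)\le (r^2-\delta n^2)/(4n)$ over integers $k$ --- your identification of this as the source of $\delta$ is exactly the paper's observation $Q(k)=\mu'\bigl(n(n+1)(k-\tfrac{r}{2n})^2-\tfrac{n(n+1)\delta}{4}\bigr)\ge 0$.

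The genuine gap is part (2), and you flag it yourself: your argument ends with the unproved claim that a single vertical $(-n)$-curve carrying all $r$ branch points realizes the extremum, calling its verification ``the delicate combinatorial core.'' That optimization is never performed in the paper, and it is the wrong shape of argument: the inequality is a consequence of linear inequalities and need not be attained by any actual germ, so hunting for a minimizing configuration is both unnecessary and (as you note) hard to close. What the paper actually does is purely linear. Writing $A_n=n-1-\frac{r(2n-1)-3n}{n}\mu'$ and $B_n=n-\frac{(n+1)(r^2-\delta n^2)}{4n}\mu'$, the constant $\mu$ in part (2) is determined by the single linear condition $-2A_n+nB_n-1=0$; solving this for $\mu'$ is precisely what produces the denominator $n(n+1)r^2-8(2n-1)r+24n-\delta n^3(n+1)$, so the ``extra terms'' you attribute to a specific extremal germ in fact come from forcing the coefficient of $j_1(F_p)$ to vanish. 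One then substitutes $\varepsilon(F_p)=j_1(F_p)$, $\alpha_0(F_p)=\alpha_0^+(F_p)-2\sum_{a\ge2}j_a(F_p)$ and the three inequalities of Lemma~\ref{jplem} into $A_n\alpha_0+B_n\sum_{k\ge1}\alpha_k-(2A_n+1)\varepsilon$, and checks that every remaining coefficient --- $-2A_n+anB_n=1+(a-1)nB_n$ for $a\ge2$ on $j_a(F_p)$, $(n-2)A_n-2B_n$ on $j(F_p)-\eta_p$, and $2(n-2)A_n-B_n$ on $\kappa(F_p)$ --- is non-negative once $n\ge4$. The sign conditions needed are $A_n>0$, $B_n>0$ and $(n-2)A_n-2B_n>0$; your ``governing quantity $n^2-4n+1$'' corresponds to nothing in this verification and is a symptom that the computation was not carried out. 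With the linear substitution in place of your extremal-germ search, the rest of your outline goes through.
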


\begin{proof}
First, assume that $r\ge n(n-1)$. We put 
$$
\mu=\frac{48n(n-1)(r-1)}{(n(n+1)r^2-8(2n-1)r+24n-\delta n^3(n+1)},\quad
\mu'=\frac{n-1}{12(r-1)}\mu.
$$
By the formulae for $\chi_f$ and $e_f$ obtained in the previous 
 section, we get
\begin{align*}
&\;(12-\mu)\chi_f-K_f^2=e_f-\mu\chi_f \\
=&\;(n-1)\alpha_0+n\sum_{k\ge 1}\alpha_k-(2n-1)
 \varepsilon \\
&\;-\mu'\left(\frac{r(2n-1)-3n}{n}\alpha_0+(n+1)\sum_{k\ge 1}(-nk^2+rk)\alpha_k
-\frac{2(r(2n-1)-3n)}{n}\varepsilon\right) \\
=&\;\left(n-1-\frac{r(2n-1)-3n}{n}\mu'\right)\alpha_0 
+\sum_{k\ge 1}\left(Q(k)+n-
\frac{(n+1)(r^2-\delta n^2)}{4n}\mu'\right)\alpha_k \\
&\;-\left(2n-1-\frac{2(r(2n-1)-3n)}{n}\mu'\right)\varepsilon, \\
\end{align*}
where
$$
Q(k)=\mu'\left(n(n+1)\left(k-\frac{r}{2n}\right)^2-\frac{n(n+1)\delta}{4}\right)\ge 0.
$$
Therefore,
\begin{equation}\label{eq:5.2}
(12-\mu)\chi_f-K_f^2\geq A_n\alpha_0+B_n\sum_{k\ge 1}\alpha_k-(2A_n+1)\varepsilon,
\end{equation}
where 
$$
A_n=n-1-\frac{r(2n-1)-3n}{n}\mu',\quad B_n=n-\frac{(n+1)(r^2-\delta n^2)}{4n}\mu'.
$$
By the definitions of $\mu$ and $\mu'$, we see that $A_n$ and $B_n$ are positive 
 rational numbers satisfying 
\begin{equation}\label{eq:5.3}
-2A_n+nB_n-1=0.
\end{equation}
We shall show that the right hand side of \eqref{eq:5.2} is non-negative 
 by estimating it on each fiber $F_p$.
By Lemma~\ref{jplem}, we have
\begin{align*}
& A_n\alpha_0(F_p)+B_n\sum_{k\ge 1}\alpha_k(F_p)-(2A_n+1)\varepsilon(F_p)\\
\geq &\;\sum_{a\ge 2}((n-4)A_n+(an-2)B_n)j_a(F_p)+((n-4)A_n+(n-2)B_n-1)j_1(F_p)\\
&\; -((n-2)A_n-2B_n)\eta_p+(2(n-2)A_n-B_n)\kappa(F_p)\\
=&\;\sum_{a\ge 2}(-2A_n+anB_n)j_a(F_p)+(-2A_n+nB_n-1)j_1(F_p)\\
&\; +((n-2)A_n-2B_n)(j(F_p)-\eta_p)+(2(n-2)A_n-B_n)\kappa(F_p).\\
\end{align*}
When $n\geq 4$, one can show $(n-2)A_n-2B_n>0$ and thus all coefficients of $j_a(F_p)$, $j(F_p)-\eta_p$, $\kappa(F_p)$ are non-negative by \eqref{eq:5.3}. Hence we get (1).

Assume that $n\le r<n(n-1)$. We put 
$$
\mu=\frac{48n^2(r-1)}{(n-1)(n+1)(r^2-\delta n^2)},\quad
\mu'=\frac{n-1}{12(r-1)}\mu
$$
and 
$$
A_n=n-1-\frac{r(2n-1)-3n}{n}\mu',\quad B_n=n-\frac{(n+1)(r^2-\delta n^2)}{4n}\mu'.
$$
Clearly, $A_n>0$ and $B_n=0$. By Corollary \ref{gcor}, we get $j(F_p)=0$ for any $p\in B$. Thus we get
\begin{equation*}
(12-\mu)\chi_f-K_f^2\geq A_n\alpha_0^{+}+B_n\sum_{k\ge 1}\alpha_k
=A_n\alpha_0^{+}\ge 0,
\end{equation*}
which is the desired inequality.
\end{proof}

\section{Appendix}

Let $f:S\to B$ be a fibration of genus $g\geq 2$.
Put $e_f(F_p)=e(F_p)-e(F)$ for any fiber $F_p$, where $F$ is a general fiber.
It is well-known that $e_f(F_p)\geq 0$ with the equality holding if and 
only if $F_p$ is a smooth curve of genus $g$, and $e_f=\sum_{p\in B}e_f(F_p)$.
In \S4, we have defined the Horikawa index and the local signature for primitive cyclic covering fibrations of type $(g,0,n)$ using singularity indices.
However, in general, it is not known whether Horikawa indices or local signatures, if they exist, are unique (cf.\ \cite{ak}).
As to primitive cyclic covering fibrations of type $(g,0,n)$, we have obtained another local concentration of $e_f$ in \eqref{ef}.
Therefore, we may have two apparently distinct expressions of $\mathrm{Sign}(S)$ in this case from the proof of Corollary~\ref{signcor}.
In this appendix, we shall show that two expressions coincide. Namely,

\begin{prop}
Let $f\colon S\to B$ be a primitive cyclic covering fibration of type $(g,0,n)$.
Then we have 
$$
e_f(F_p)=(n-1)\alpha_0(F_p)+n\sum_{k\ge 1}\alpha_k(F_p)
-(2n-1)\varepsilon(F_p)
$$
for any $p \in B$.
\end{prop}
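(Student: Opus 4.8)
The plan is to compute $e_f(F_p)=e(F_p)-e(F)$ directly as a topological Euler number, exploiting the fact that over each fiber the map $\widetilde\theta$ is genuinely a cyclic covering. First I would pass to the fiber $\widetilde F_p=\widetilde\theta^{-1}(\widetilde\Gamma_p)$ of $\widetilde f$ upstairs. Since $\rho\colon\widetilde S\to S$ consists of $\varepsilon(F_p)$ blow-ups over $p$ (Lemma~\ref{rholem}), each raising the Euler number of the fiber by one, we have $e(F_p)=e(\widetilde F_p)-\varepsilon(F_p)$. On the other hand, $\widetilde\theta$ is unramified of degree $n$ away from $\widetilde R$ and a homeomorphism along $\widetilde R$, so additivity of the Euler characteristic gives $e(\widetilde F_p)=n\,e(\widetilde\Gamma_p)-(n-1)\,e(\widetilde\Gamma_p\cap\widetilde R)$, where $\widetilde\Gamma_p\cap\widetilde R$ is the set-theoretic intersection. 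Applying the same formula to a general fiber (where $\widetilde\Gamma\cong\mathbb{P}^1$ meets $\widetilde R$ transversally in $r$ points) reproduces $e(F)=2n-(n-1)r$, consistent with $r=2g/(n-1)+2$.

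The heart of the argument is to evaluate the three geometric quantities in terms of the singularity indices. For $e(\widetilde\Gamma_p)$ I would note that $\widetilde\Gamma_p$ is obtained from the $\mathbb{P}^1$-fiber $\Gamma_p$ by the blow-ups of $\widetilde\psi$ lying over $p$, so $e(\widetilde\Gamma_p)=2+N_p$ where $N_p$ is their number; by Lemma~\ref{multlem} every such blow-up is centered at a (possibly infinitely near) singular point of the branch locus of multiplicity $kn$ or $kn+1$ with $k\ge1$, and conversely each point counted by $\alpha_k(F_p)$ is blown up exactly once, whence $N_p=\sum_{k\ge1}\alpha_k(F_p)$. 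Because $\widetilde R$ is smooth, its $\widetilde\varphi$-vertical $(-an)$-curves over $p$ are pairwise disjoint copies of $\mathbb{P}^1$, so $e(\widetilde R_v(p))=2\sum_{a\ge1}j_a(F_p)$; and, again by smoothness, $\widetilde R_h$ is disjoint from $\widetilde R_v$, so $\#(\widetilde R_h\cap\widetilde\Gamma_p)=r-\alpha_0^+(F_p)$ straight from the definition of the ramification index $\alpha_0^+(F_p)$. Substituting $e(\widetilde\Gamma_p\cap\widetilde R)=2\sum_{a\ge1}j_a(F_p)+r-\alpha_0^+(F_p)$ into the covering formula yields a closed expression for $e(\widetilde F_p)$.

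Finally I would assemble $e_f(F_p)=e(\widetilde F_p)-\varepsilon(F_p)-e(F)$; the terms $2n$ and $(n-1)r$ cancel against those in $e(F)$, leaving
$$
e_f(F_p)=n\sum_{k\ge1}\alpha_k(F_p)+(n-1)\alpha_0^+(F_p)-2(n-1)\sum_{a\ge1}j_a(F_p)-\varepsilon(F_p).
$$
To match the asserted formula I would substitute the two bookkeeping identities recorded in the definition preceding Lemma~\ref{jplem}, namely $\alpha_0(F_p)=\alpha_0^+(F_p)-2\sum_{a\ge2}j_a(F_p)$ and $\varepsilon(F_p)=j_1(F_p)$; the difference between the two expressions then collapses to $-2(n-1)j_1(F_p)+2(n-1)\varepsilon(F_p)=0$. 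I expect the main obstacle to be the clean identification $N_p=\sum_{k\ge1}\alpha_k(F_p)$: one must argue that the canonical (modulo-$n$) resolution blows up precisely the points recorded by the indices $\alpha_k$ with $k\ge1$ and nothing else, which rests on the dichotomy $m_i\in n\mathbb{Z}\cup(n\mathbb{Z}+1)$ of Lemma~\ref{multlem} together with the fact that points of multiplicity $\le1$ are never blown up. Everything else is additivity of the Euler characteristic and routine algebra.
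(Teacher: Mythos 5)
Your proof is correct, and it follows the same overall strategy as the paper --- pass to $\widetilde{S}$ via $e(F_p)=e(\widetilde{F}_p)-\varepsilon(F_p)$, compute $e(\widetilde{F}_p)$ from the cyclic covering structure, and translate back through the identities $\varepsilon(F_p)=j_1(F_p)$ and $\alpha_0(F_p)=\alpha_0^{+}(F_p)-2\sum_{a\ge 2}j_a(F_p)$ --- but your central computation is organized genuinely differently. The paper proves the equivalent identity $e_{\widetilde{f}}(\widetilde{F}_p)=(n-1)\alpha_0^{+}(F_p)+n\sum_{k\ge 1}\alpha_k(F_p)-2(n-1)j(F_p)$ by decomposing $\widetilde{\Gamma}_p=\sum_i m_i\Gamma_i$ into irreducible components, applying the Hurwitz formula to each component with $r_i=\Gamma_i\widetilde{R}>0$, and tracking three kinds of pairwise intersection points ($N_1$, $N_2$, $N_3$) together with the counts $J=j(F_p)$ and $K$ of components disjoint from $\widetilde{R}$; it even inserts the simplifying assumption that the $\Gamma_i$ meet $\widetilde{R}_h$ transversally. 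Your single application of additivity, $e(\widetilde{F}_p)=n\,e(\widetilde{\Gamma}_p)-(n-1)\,e(\widetilde{\Gamma}_p\cap\widetilde{R})$ (valid because $\widetilde{\theta}$ is \'etale of degree $n$ off $\widetilde{R}$ and restricts to a homeomorphism from the ramification divisor onto $\widetilde{R}$), absorbs all of that bookkeeping at once: smoothness of $\widetilde{R}$ forces $\widetilde{\Gamma}_p\cap\widetilde{R}$ to be the disjoint union of the $j(F_p)$ vertical rational curves and the points of $\widetilde{R}_h\cap\widetilde{\Gamma}_p$, whose number is $r-\alpha_0^{+}(F_p)$ directly from the definition of the ramification index, so no transversality assumption or case analysis over components is needed. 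The one step where you and the paper stand on equal (and equally lightly argued) footing is the identification of the number of blow-ups over $p$ with $\sum_{k\ge 1}\alpha_k(F_p)$: the paper asserts this in a single clause, and the justification you sketch --- Lemma~\ref{multlem} forces every center to have multiplicity $kn$ or $kn+1$, while the standing normalization that all $\widetilde{f}$-vertical $(-1)$-curves lie in $\mathrm{Fix}(\widetilde{\sigma})$ rules out centers of multiplicity at most $1$ --- is exactly what is implicitly being used there, so your version is, if anything, slightly more careful on this point.
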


\begin{proof}
It is sufficient to show that 
$$
e_{\widetilde{f}}(\widetilde{F}_p)=
(n-1)\alpha^+_0(F_p)+n\sum_{k\ge 1}\alpha_k(F_p)-2(n-1)j(F_p).
$$ 
Let $N=\sum_{k\ge 1}\alpha_k(F_p)$ be the number of blow-ups on $\Gamma_p$ and $\widetilde{\Gamma}_p=\sum_{i=0}^{N}m_i \Gamma_i$ the irreducible decomposition. We may assume $\Gamma_i$ and $\widetilde{R}_h$ are transverse for simplicity. Put $r_i=\Gamma_i\widetilde{R}$ and $F_i=\widetilde{\theta}^{\ast} \Gamma_i$. Then, 
$$
F_p=\sum_{i=0}^N m_i F_i=\sum_{r_i>0} m_i F_i+\sum_{r_i=0} m_i(F_{i,1}+\cdots+F_{i,n})+\sum_{\Gamma_i\subset \widetilde{R}} m_i nF^{\prime}_i,
$$ 
where $F_{i,j}$, $F^{\prime}_i$ are smooth rational curves. For $r_i>0$, the restriction map $F_i\rightarrow \Gamma_i$ is an $n$-cyclic covering. From the Hurwitz formula, we have $2g(F_i)-2=-2n+(n-1)r_i$. Let $N_1$, $N_2$ and $N_3$ be the number of intersection points of two $\Gamma_i$ and $\Gamma_j$ which is contained in $\widetilde{R}_h$, not contained in $\widetilde{R}_h$ and that one $\Gamma_i$ is contained in $\widetilde{R}$, respectively. Clearly, it follows $N=N_1+N_2+N_3$. Let $J=j(F_p)$ and $K$ the number of $\Gamma_i$ such that $r_i=0$. Then, we have
\begin{eqnarray*}
e(\widetilde{F}_p)&=&\sum_{r_i>0} e(F_i)+2nK+2J-N_1-nN_2-N_3\\
&=&\sum_{r_i>0}(2n-(n-1)r_i)+2nK+2J-N_1-nN_2-N_3\\
&=&2n(N+1)-2(n-1)J-(n-1)\sum_{r_i>0} r_i-N-(n-1)N_2.
\end{eqnarray*}
Since $e(\widetilde{F})=2n-(n-1)r$, we have
\begin{eqnarray*}
e_{\widetilde{f}}(\widetilde{F}_p)=(2n-1)N-2(n-1)J+(n-1)\left(r-\sum_{r_i>0} r_i\right)-(n-1)N_2.
\end{eqnarray*}
On the other hand, we have
\begin{eqnarray*}
\alpha^+_0(F_p)&=&r-\#({\rm Supp}(\widetilde{\Gamma}_p)\cap {\rm Supp}(\widetilde{R}))\\
&=&r-\sum_{r_i>0} r_i+N_1+N_3.
\end{eqnarray*}
Combining these equalities, the assertion follows.
\end{proof}


\begin{thebibliography}{99}
 \bibitem{ak}
 T. Ashikaga and K. Konno, 
Global and local properties of pencils of algebraic curves,
Algebraic Geometry 2000 Azumino, S.\ Usui et al.\ eds, 1-49, Adv.\ Stud.\ Pure Math.\ ${\bf 36}$, Math.\ Soc.\ Japan, Tokyo, 2002.
\bibitem{ba}
M.A.~Barja, 
On the slope of bielliptic fibrations, 
Proc.\ Amer.\ Math.\ Soc.\ \textbf{129} (2001), 1899--1906. 
\bibitem{barja-naranjo}
M.\ A.~Barja and J.\ C.~Naranjo,
Extension of maps defined on many fibres, Dedicated to the memory of 
	Fernando Serrano, Collect.\ Math.\ \textbf{49} (1998), 227--238.
\bibitem{bar} 
W. Barth, K. Hulek, C. Peters and A. Van de Ven, 
{\em Compact Complex Surfaces}, 
2nd Edition, Springer-Verlag, Berlin, 2004.
\bibitem{beorchia-zucconi}
V.\ Beorchia and F.\ Zucconi, 
The slope conjecture for the fourgonal locus, preprint 
	(arXiv:1209.3571v3 [math.AG]).
 \bibitem{dbl} M. Cornalba and L. Stoppino, A sharp bound for the slope of double cover fibrations, Michigan Math.\ J.\ ${\bf 56}$ (2008), 551-561.
\bibitem{pag} 
P. A. Griffiths and J. Harris, 
{\em Principles of Algebraic Geometry}, 
Wiley Classics Library. John Wiley \& Sons Inc., New York (1994).
\bibitem{kod}
K.~Kodaira, 
A certain type of irregular algebraic surfaces, 
J.\ Anal.\ Math.\ \textbf{19} (1967), 207-215.
\bibitem{kon}
K.~Konno,
Non-hyperelliptic fibrations of small genus and certain irregular canonical surfaces, 
Ann.\ Sc.\ Norm.\ Sup.\ Pisa ser.\ IV, \textbf{20} (1993), 575-595.
\bibitem{trigonal}
K.~Konno, A lower bound of the slope of trigonal fibrations, Intern.\ 
	J.\ Math.\ \textbf{7} (1996), 19--27. 
\bibitem{cliff}
K.~Konno, 
Clifford index and the slope of fibered surfaces, 
J.\ Alg.\ Geom.\ \textbf{8} (1999), 207--220.
\bibitem{luzuo}
X.\ Lu and K.\ Zuo, On the slope conjecture of Barja and Stoppino for 
	fibred surfaces, preprint (arXiv:1504.06276v1 [math.AG]).
\bibitem{ma}
S.~Matsusaka, 
Some numerical invariants of hyperelliptic fibrations, 
J.\ Math.\ Kyoto Univ.\ \textbf{30} (1990), 33-57.
\bibitem{stankova}
Z.\ E.\ Stankova-Frenkel, 
Moduli of trigonal curves, 
J.\ Alg.\ Geom.\ \textbf{9} (2000), 607--662.
 \bibitem{xiao2} 
G. Xiao, 
Fibred algebraic surfaces with low slope, 
Math.\ Ann.\ {\bf 276} (1987), 449-466.
 \bibitem{pi1} 
G. Xiao, 
$\pi_1$ of elliptic and hyperelliptic surfaces, 
Internat.\ J.\ Math.\ ${\bf 2}$ (1991), 599-615.
\bibitem{xiao_book}
G.~Xiao, 
{\em Fibrations of Algebraic Surfaces} (in Chinese), 
Shanghai Publishing House of Science and Technology, 1992.
\end{thebibliography}
\end{document}